\newtheorem{prop}{Proposition}[section]
\newtheorem{conj}[prop]{Conjecture}
\newtheorem{lem}[prop]{Lemma}
\newtheorem{thm}[prop]{Theorem}
\newtheorem{cor}[prop]{Corollary}
\theoremstyle{definition}
\newtheorem{remar}[prop]{Remark}
\newcommand{\Aut}{{\mathrm {Aut}}}
\renewcommand{\Im}{{\mathrm {Im}}}
\newcommand{\ord}{{\mathrm {ord}}}
\newcommand{\Nm}{{\mathrm {N}}}
\newcommand{\tr}{{\mathrm {tr}}}
\newcommand{\Sym}{{\mathrm {Sym}}}
\newcommand{\st}{{\mathrm {st}}}
\newcommand{\Spin}{{\mathrm {Spin}}}
\newcommand{\Frob}{{\mathrm {Frob}}}
\newcommand{\Gal}{\mathrm {Gal}}
\newcommand{\ad}{{\mathrm {ad}}}
\newcommand{\diag}{\mathrm{diag}}
\newcommand{\A}{{\mathbb A}}
\newcommand{\C}{{\mathbb C}}
\newcommand{\R}{{\mathbb R}}
\newcommand{\Q}{{\mathbb Q}}
\newcommand{\ZZ}{{\mathbb Z}}
\newcommand{\Z}{{\mathbb Z}}
\newcommand{\HH}{{\mathfrak H}}
\newcommand{\FFF}{{\mathcal F}}
\newcommand{\OO}{{\mathcal O}}
\newcommand{\pp}{{\mathfrak p}}
\newcommand{\qq}{{\mathfrak q}}
\newcommand{\p}{{\mathfrak p}}
\newcommand{\x}{{\mathbf{x}}}
\newcommand{\y}{{\mathbf{y}}}
\newcommand{\FF}{{\mathbb F}}
\newcommand{\GL}{\mathrm {GL}}
\newcommand{\SL}{\mathrm {SL}}
\newcommand{\Sp}{\mathrm {Sp}}
\newcommand{\SO}{\mathrm {SO}}
\newcommand{\GSp}{\mathrm {GSp}}
\newcommand{\Qbar}{\overline{\mathbb Q}}
\newcommand{\rhobar}{\overline{\rho}}
\begin{document}
\title{Automorphic forms for some even unimodular lattices}
\author{Neil Dummigan}
\author{Dan Fretwell}

\date{January 5th, 2021.}
\address{University of Sheffield\\ School of Mathematics and Statistics\\
Hicks Building\\ Hounsfield Road\\ Sheffield, S3 7RH\\
U.K.}
\email{n.p.dummigan@shef.ac.uk}
\address{School of Mathematics\\University Walk\\Bristol\\BS8 1TW\\U.K.}
\email{daniel.fretwell@bristol.ac.uk}
\begin{abstract}
We look at genera of even unimodular lattices of rank $12$ over the ring of integers of $\Q(\sqrt{5})$ and of rank $8$ over the ring of integers of $\Q(\sqrt{3})$, using Kneser neighbours to diagonalise spaces of scalar-valued algebraic modular forms. We conjecture most of the global Arthur parameters, and prove several of them using theta series, in the manner of Ikeda and Yamana. We find instances of  congruences for non-parallel weight Hilbert modular forms. Turning to the genus of Hermitian lattices of rank $12$ over the Eisenstein integers, even and unimodular over $\Z$, we prove a conjecture of Hentschel, Krieg and Nebe, identifying a certain linear combination of theta series as an Hermitian Ikeda lift, and we prove that another is an Hermitian Miyawaki lift.
\end{abstract}

\subjclass[2010]{11F41,11F27, 11F33, 11E12, 11E39}

\keywords{Algebraic modular forms, even unimodular lattices, theta series, Hilbert modular forms, Hermitian modular forms}

\maketitle

\section{Introduction}
Nebe and Venkov \cite{NV} looked at formal linear combinations of the $24$ Niemeier lattices, which represent classes in the genus of even, unimodular, Euclidean lattices of rank $24$. They found a set of $24$ eigenvectors for the action of an adjacency operator for Kneser $2$-neighbours, with distinct integer eigenvalues. This is equivalent to computing a set of Hecke eigenforms in a space of scalar-valued modular forms for a definite orthogonal group $\mathrm{O}_{24}$. They conjectured the degrees $g_i$ in which the Siegel theta series $\Theta^{(g_i)}(v_i)$ of these eigenvectors are first non-vanishing, and proved them in $22$ out of the $24$ cases.

Ikeda \cite[\S 7]{I2} identified $\Theta^{(g_i)}(v_i)$ in terms of Ikeda lifts and Miyawaki lifts, in $20$ out of the $24$ cases, exploiting his integral construction of Miyawaki lifts. Chenevier and Lannes \cite{CL} expanded upon his work and showed how it can be used to determine the global Arthur parameters of the automorphic representations $\pi_i$ of $\mathrm{O}_{24}(\A)$
generated by the $v_i$ in those $20$ cases. They also used different methods, based on Arthur's multiplicity formula, to recover the global Arthur parameters of all $24$ of the $\pi_i$, and completed the proof of Nebe and Venkov's conjecture on the degrees.

Ikeda and Yamana \cite{IY} constructed Ikeda lifts in the case of Hilbert modular forms over totally real fields. An integral construction of Miyawaki lifts based on this has been worked out in detail by Atobe \cite{At}. As an application, Ikeda and Yamana considered the genus of $6$ classes of even, unimodular lattices of rank $8$ over the ring of integers of the real quadratic field $E=\Q(\sqrt{2})$. They found a set of $6$ eigenvectors for the action of an adjacency operator for Kneser $\sqrt{2}$-neighbours, and determined the first non-vanishing theta series for each one, again using Ikeda and Miyawaki lifts, and for the latter a kind of triple product of eigenvectors introduced by Nebe and Venkov. The global Arthur parameters of the associated automorphic representations of $\mathrm{O}_{8}(\A_E)$ may be deduced from their results.

We extend this work of Ikeda and Yamana to other cases, in particular to the genus of $15$ classes of even, unimodular lattices of rank $12$ over the ring of integers of $E=\Q(\sqrt{5})$, first studied by Costello and Hsia \cite{CH}. We are able to conjecture the global Arthur parameters for $12$ out of the $15$ associated automorphic representations. These are formal direct sums of certain discrete automorphic representations of $\GL_m(\A_E)$, for various $m$. The ingredients going into these include representations of $\GL_2(\A_E)$ attached to Hilbert modular forms for $\SL_2(\OO_E)$, including examples of non-parallel weights, and symmetric square lifts to $\GL_3(\A_E)$. The conjectured global Arthur parameters are such that the implied eigenvalues for the Hecke operators $T_{(\sqrt{5})}$ and $T_{(2)}$ match those we computed using Kneser neighbours. They also satisfy the requirements of the Langlands parameters at the infinite places.

In $10$ of these $12$ cases we prove the conjecture for the global Arthur parameters, in Proposition \ref{rt5N12}. In one case we can apply directly a theorem of Ikeda and Yamana (Proposition \ref{ilifttheta}) to identify the global Arthur parameter and (upon checking the non-vanishing of a certain $L$-value) to determine the first non-vanishing theta series as a specific Ikeda lift. In other cases we follow Ikeda and Yamana, in using Kuang's analogue \cite{Ku} of a well-known theorem of B\"ocherer, to establish that certain Hilbert-Siegel modular forms, including Ikeda lifts,  are in the images of theta maps. Our Hecke eigenvalue computations then determine which eigenvectors they come from. Following Chenevier and Lannes, we use a theorem of Rallis to deduce the global Arthur parameters from the theta series. Finally, in one case we use non-vanishing of a triple product of eigenvectors to show that the theta series of a certain eigenvector is not orthogonal to a certain Miyawaki lift, which is enough to determine the global Arthur parameter, and we show that in fact the theta series {\em is} the Miyawaki lift.

An interesting aspect of the work of Chenevier and Lannes was the study of easily-proved congruences of Hecke eigenvalues between computed eigenvectors. Some could be accounted for, via the global Arthur parameters, by well-known congruences between genus-$1$ cusp forms and Eisenstein series, such as Ramanujan's mod $691$ congruence. Another was used to prove a mod $41$ congruence of Hecke eigenvalues involving genus-$1$ and vector-valued genus-$2$ forms, the first known instance of Harder's conjecture. In our case of rank $12$ for $\Q(\sqrt{5})$, we likewise observe congruences that can be explained in terms of congruences between Hilbert modular cusp forms and Eisenstein series, modulo prime divisors occurring in Dedekind zeta values. We also see two apparent congruences involving genus-$2$ vector-valued forms ``lifted'' from Hilbert modular forms (for us of non-parallel weight) in the manner of Johnson-Leung and Roberts \cite{JR}. The congruences are akin to those between cusp forms and Klingen-Eisenstein series. The moduli are ``dihedral'' congruence primes for certain cusp forms with quadratic character for $\Gamma_0(5)$. This leads us to a conjecture (\ref{hilbcong}) about congruences for non-parallel weight Hilbert modular forms. H. Hida has informed us that experimental instances of such congruences were discovered by H. Naganuma more than thirty years ago. We are not aware of them having been published anywhere before now.

We consider also the genus of $31$ classes of even, unimodular lattices of rank $8$ over the ring of integers of $E=\Q(\sqrt{3})$, first studied by Hung \cite{Hu}. We are able to conjecture the global Arthur parameters in $28$ out of the $31$ cases, and can prove $16$ of these. A new feature here is that the narrow class number of $\Q(\sqrt{3})$ is $2$ (whereas for both $\Q(\sqrt{5})$ and $\Q(\sqrt{2})$ it is $1$). Thus the quadratic character, and CM forms, associated to the narrow Hilbert class field $H=\Q(\zeta_{12})$, make an appearance. (Since $H/E$ is ramified only at infinite places, an unramified Hecke character for $H$ produces a level $1$ Hilbert modular form for $E$.) For $E=\Q(\sqrt{3})$, as for any $E=\Q(\sqrt{D})$ with squarefree $D=-1+4t$, the rank only has to be divisible by $2$ (indeed $\begin{pmatrix} 2 & \sqrt{D}\\\sqrt{D} & 2t\end{pmatrix}$ is even, unimodular of rank $2$, as pointed out in \cite{Hs}) and we look also at the baby cases of ranks $2,4$ and $6$.

Hentschel, Krieg and Nebe \cite{HKN} studied a genus of $5$ classes of Hermitian lattices of rank $12$ over the ring of integers of $E=\Q(\sqrt{-3})$, even and unimodular over $\Z$. The Hecke operator $T_{(2)}$ on the associated space of algebraic modular forms was diagonalised in \cite{DS}. In Proposition \ref{propherm}, for each eigenspace we determine the first non-vanishing (Hermitian) theta series, in particular confirming a conjecture of Hentschel, Krieg and Nebe that one of them is a degree-$4$ Hermitian Ikeda lift (up to scaling). We also identify one as an Hermitian Miyawaki lift, as studied by Atobe and Kojima \cite{AK}. For our purposes, we put together an Hermitian analogue of B\"ocherer's theorem (Proposition \ref{eigenthetaHerm}(3)), making use of some work of Lanphier and Urtis \cite{LU}, among others. To get from theta series to global Arthur parameters, the analogue of Rallis's theorem that we need (Proposition \ref{eigenthetaHerm}(1),(2)) is covered by work of Y. Liu \cite{Liu}.

In \S 2 we introduce some preliminaries on even unimodular lattices (over $\ZZ$), algebraic modular forms, local Langlands parameters, global Arthur parameters, theta series, Ikeda and Miyawaki lifts. In \S 3 we review briefly the work of Chenevier and Lannes on the Niemeier lattices. After some preliminaries in \S 4 on even unimodular lattices over real quadratic fields, in \S 5 we review the work of Ikeda and Yamana on $\Q(\sqrt{2})$. In \S 6 we further warm up with even unimodular lattices of rank $8$ for $\Q(\sqrt{5})$, where there are only $2$ classes in the genus. \S 7 deals with the more substantial case of the $15$ classes for rank $12$ for $\Q(\sqrt{5})$. We introduce the Hilbert modular forms involved, before presenting the Hecke eigenvalues for $T_{(\sqrt{5})}$ and $T_{(2)}$, conjecturing the global Arthur parameters, and proving what we can about them and the degrees via theta series. 
Then we look at the congruences mentioned above. \S 8 is about $E=\Q(\sqrt{3})$.
In \S 9 we consider to what extent we have covered all the interesting examples amenable to computation, and have a brief look at one or two more, with $E=\Q(\sqrt{7})$ and $\Q(\sqrt{11})$. After preliminaries in \S 10 on Hermitian lattices, even and unimodular over $\Z$, in \S 11 we look at the case $E=\Q(\sqrt{-3})$, rank $12$.

All the computed neighbour matrices used but not included in the paper, and their characteristic polynomials, may be found at the second-named author's webpage \url{https://www.danfretwell.com/kneser}.

We are grateful to G. Chenevier for his suggestion, in response to \cite{DS}, to adapt the methods of Ikeda \cite[\S 7]{I2} to Hermitian lattices. We thank him, O. Ta\"ibi and an anonymous referee for their comments on an earlier version of this paper. We thank also H. Hida for informing us of the work of Naganuma, M. Kirschmer, for advice on using his Magma code for neighbours over number fields, and for making some useful additions to it, and S. Yamana for his invaluable help with the proof of Proposition \ref{rt5N12}, case $\mathbf{i=13}$.

\section{Preliminaries}
\subsection{Even unimodular lattices and algebraic modular forms}
Let $L$ be a $\Z$-lattice in $V\simeq \Q^N$, with positive-definite integral quadratic form $\x\mapsto q_A(\x):=\frac{1}{2}\langle \x,\x\rangle$, where $A$ is a positive-definite symmetric matrix of size $N$ with rational entries and $\langle \y,\x\rangle:={}^t\y A\x$, for all $\x,\y\in V$. Associated to $L$ is an orthogonal group-scheme $O_L$, where for any commutative ring $R$,
$$O_L(R)=\{g\in \GL(L\otimes R)\mid q_A\circ g=q_A\}.$$ If $\A_f$ is the ring of finite adeles of $\Q$ then $O_L(\A_f)$ produces other lattices from $L$: given $(g_p)\in O_L(\A_f)$,
$(g_p)L:=V\cap((g_p)(L\otimes\A_f))$. These lattices are everywhere locally isometric to $L$, and form the {\em genus } of $L$. Let $K=\prod_p O_L(\Z_p)\subset O_L(\A_f)$ be the stabiliser of $L$. Then there is a natural bijection between $C_L:=O_L(\Q)\backslash O_L(\A_f)/K$ and the set of classes in the genus of $L$, which is finite, say represented by classes $[L_1],\ldots,[L_h]$, with $[L]=[L_1]$.

The set of $\C$-valued functions on $C_L$ may be regarded as the space of functions on $O_L(\A)$, left-invariant under $O_L(\Q)$, right-invariant under $K$ and transforming on the right via the trivial representation of $O_L(\R)$. Thus they are scalar-valued algebraic modular forms for $O_L$, forming a space denoted $M(\C,K)$. It is acted upon by the Hecke algebra $H_{K}$ of all locally constant, compactly supported functions $O_L(\A_f)\rightarrow\C$ that are left and right $K$-invariant. It is a semi-simple module for $H_K$ \cite[Prop. 6.11]{Gr1}, and there is a natural bijection between simple $H_K$-submodules of $M(\C,K)$ and irreducible automorphic representations of $O_L(\A)$ with a $K$-fixed vector and such that $\pi_{\infty}$ is trivial \cite[Proposition 2.5]{GV}.

We now suppose that $L$ is even integral ($\langle \x,\x\rangle\in 2\ZZ\,\,\,\forall \x\in L$) and unimodular ($L^*=L$, where $L^*:=\{\y\in V|\,\,\langle \y,\x\rangle\in\ZZ\,\,\forall\x\in L\}$). (By adjusting $A$, we may suppose that $L=\Z^N$, then $A$ has integer entries, even on the diagonal, and determinant $1$.) Then $8\mid N$ \cite[Scholium 2.2.2(b)]{CL} and every even unimodular lattice of rank $N$ is equivalent to one in the genus of $L$ \cite[Chapter 15, \S 7]{CS}. At all primes $p$, $A$ is equivalent over $\Z_p$ to $\begin{pmatrix}0_{N/2} & I_{N/2}\\I_{N/2} & 0_{N/2}\end{pmatrix}$ \cite[Scholium 2.2.5]{CL}. Hence $\SO_L/\Z_p$ is reductive and $\SO_L(\Q_p)$ is a split orthogonal group, with $\SO_L(\Z_p)$ a hyperspecial maximal compact subgroup. To deal with $p=2$, we have to define the group scheme $\SO_L/\Z$ as the kernel of the Dickson determinant on $O_L$.
As explained just before \cite[4.2.11]{CL}, the $p$-component of $H_K$ is a subring of a Hecke algebra for $\SO_L(\Q_p)$ with respect to $\SO_L(\Z_p)$.  Convolution by the indicator function of the double coset $K\diag(p,1,\ldots,1,p^{-1},1,\ldots,1)K$ gives a Hecke operator denoted $T_p$, which can be made explicit using the notion of Kneser $p$-neighbours \cite[6.2.8]{CL}. Given lattices $M$ and $M'$ in $V$, we say that $M'$ is a $p$-neighbour of $M$ if $\#\left(\frac{M}{M\cap M'}\right) =\#\left(\frac{M'}{M\cap M'}\right)=p$. The number of $p$-neighbours of $M$ is finite, equal to the number of left cosets of $K$ into which $K\diag(p,1,\ldots,1,p^{-1},1,\ldots,1)K$ decomposes, and if $M'$ is a $p$-neighbour of $M$ then $M$ and $M'$ belong to the same genus. The Hecke operator $T_p$ is represented, with respect to the basis $\{e_1,\ldots,e_h\}$ of $M(\C,K)$, where $e_i([L_j])=\delta_{ij}$, by the matrix $(b_{ij})$, where among the Kneser $p$-neighbours of $L_i$, $b_{ij}$ is the number isometric to $L_j$. The Hecke algebra is commutative \cite[Proposition 2.10]{Gr2}, and there exists a basis of $M(\C,K)$ of simultaneous eigenvectors for $H_K$. Let $v_i$ and $\pi_i$ be the corresponding eigenvectors and automorphic representations, respectively, in some order for $1\leq i\leq h$.

\subsection{Local Langlands parameters}
For each local Weil group $W_{\R}$ and $W_{\Q_p}$ of $\Q$ there is associated to $\pi_i$ a Langlands parameter, a homomorphism $c_{\infty}(\pi_i)$ or $c_p(\pi_i)$ from that group to the Langlands dual group $O_{N}(\C)$ of $O_L$. (As explained in \cite[6.4.7]{CL}, it lands in $\SO_N(\C)$ but is only defined up to conjugation by $O_N(\C)$.)
 Now $W_{\C}=\C^{\times}$ is a subgroup of index $2$ in $W_{\R}$, and it is a consequence of the fact that $v_i$ is scalar-valued that (up to conjugation)
 $$c_{\infty}(\pi_i):z\mapsto $$ $$\diag\left((z/\overline{z})^{(N/2)-1},(z/\overline{z})^{(N/2)-2},\ldots,(z/\overline{z})^{0},(z/\overline{z})^{1-(N/2)},(z/\overline{z})^{2-(N/2)},\ldots,(z/\overline{z})^{0}\right).$$
 At any finite prime $p$, since in our situation $\pi_i$ is unramified at $p$, $c_p(\pi_i)$ is determined by $\mathrm{Frob}_p\mapsto t_p(\pi_i)$, the Satake parameter at $p$, in fact this is how we know it exists without assuming the local Langlands conjecture for $O_N(\Q_p)$.
 This determines $\lambda_i(T_p)$, by the formula (cf. \cite[(3.13)]{Gr2})
 \begin{equation}\label{Gross} \lambda_i(T_p)=p^{(N/2)-1}\tr(t_p(\pi_i)).\end{equation}

\subsection{Global Arthur parameters}\label{GAP}
A complete description of those automorphic representations, of a split special orthogonal group $G^*$, occurring discretely in $L^2(G^*(\Q)\backslash G^*(\A))$, was given by Arthur \cite{Ar}. This was extended to a wider class of special orthogonal groups
 (including $\SO_L$) by Ta\"ibi \cite{Tai}. (The representations of $O_L(\A)$ we are looking at are classified in terms of their restriction to $\SO_L(\A)$, as explained in \cite[6.4.7]{CL}, and they also satisfy the regularity condition in the work of Arthur and Ta\"ibi.) Part of this description is that to such an automorphic representation is attached a ``global Arthur parameter'', a formal unordered sum of the form $\oplus_{k=1}^m\Pi_k[d_k]$, where $\Pi_k$ is a cuspidal automorphic representation of $\GL_{n_k}(\A)$, $d_k\geq 1$ and $\sum_{k=1}^mn_kd_k=N$. For each $\Pi_k$ there are local Langlands parameters $c_{\infty}:W_{\R}\rightarrow\GL_{n_k}(\C)$ and $c_p:W_{\Q_p}\rightarrow\GL_{n_k}(\C)$ ($\Frob_p\mapsto t_p(\Pi_k)$), defined up to conjugation in the codomain. For us there are four cases:
 \begin{enumerate}
 \item $n_k=1$ and $\Pi_k$ is trivial;
 \item $n_k=2$, $c_{\infty}(\Pi_k)(z)=\diag((z/\overline{z})^{a/2},(z/\overline{z})^{-a/2})$, and $\Pi_k$, denoted $\Delta_a$, is the automorphic representation generated by a cusp form $f$ of weight $\kappa$, with $a=\kappa-1$. If $a_p(f)$ is the Hecke eigenvalue at $p$ then $t_p(\Pi_k)=\diag(\alpha,\alpha^{-1})$, with $a_p(f)=p^{(\kappa-1)/2}(\alpha+\alpha^{-1})$;
 \item $n_k=3$, $c_{\infty}(\Pi_k)(z)=\diag((z/\overline {z})^{a},1,(z/\overline{z})^{-a})$, and $\Pi_k$, denoted $\Sym^2\Delta_a$, is the symmetric square lift of $\Delta_a$;
 \item $n_k=4$, $c_{\infty}(\Pi_k)(z)=\diag((z/\overline{z})^{a/2},(z/\overline{z})^{b/2},(z/\overline{z})^{-b/2},(z/\overline{z})^{-a/2})$, and $\Pi_k$, denoted $\Delta_{a,b}$, is the spinor lift to $\GL_4(\A)$ of the automorphic representation of $\GSp_2(\A)$ generated by a Siegel cusp form $F$ of weight $(j,\kappa)$ (vector-valued when $j>0$), with $a=j+2\kappa-3, b=j+1$. Note that $j$ is even, so $a,b$ are odd.
      \end{enumerate}
     Letting $Z$ denote the centre of $\GL_{n_kd_k}$, the representation $\Pi_k[d_k]$ of $\GL_{n_kd_k}(\A)$ occurs discretely in $L^2(Z(\A)\GL_{n_kd_k}(\Q)\backslash\GL_{n_kd_k}(\A))$.
 In all cases, $c_{\infty}(\Pi_k[d_k])(z)$ $$=c_{\infty}(\Pi_k)(z)\otimes\diag((z/\overline z)^{(d_k-1)/2},(z/\overline z)^{(d_k-3)/2},\ldots,(z/\overline z)^{(3-d_k)/2},(z/\overline z)^{(1-d_k)/2})$$ and
 $$t_p(\Pi_k[d_k])=t_p(\Pi_k)\otimes\diag(p^{(d_k-1)/2},p^{(d_k-3)/2},\ldots,p^{(3-d_k)/2},p^{(1-d_k)/2}).$$
 When $\Pi_k$ is the trivial representation of $\GL_1(\A)$, the representation $\Pi_k[d_k]$ of $\GL_{d_k}(\A)$ is written simply $[d_k]$. When direct summing the $\Pi_k[d_k]$, we direct sum the associated local Langlands parameters. To say that $\oplus_{k=1}^m\Pi_k[d_k]$ is the global Arthur parameter of $\pi_i$ is to say that each $c_p(\pi_i)$ and $c_{\infty}(\pi_i)$, composed with the standard representation from $\SO_N(\C)$ to $\GL_N(\C)$, is conjugate in $\GL_N(\C)$ to the local Langlands parameter associated to $\oplus_{k=1}^m\Pi_k[d_k]$.

 \subsection{Theta series}
 Let $L$ be an even unimodular lattice in $\Q^N$, as above, and for each $m\geq 1$ define its theta series of degree $m$ by
 $$\theta^{(m)}(L,Z):=\sum_{\x\in L^m}\exp(\pi i\tr(\langle\x,\x\rangle Z)),$$ where $Z\in\HH_m:=\{Z\in M_m(\C):\,\,{}^tZ=Z,\,\Im(Z)>0\}$, the Siegel upper half space of degree $m$. It is known that $\theta^{(m)}(L)$ is a Siegel modular form of weight $N/2$ for the full modular group $\Sp_m(\Z):=\{g\in M_{2m}(\Z):\,\,{}^tgJg=J\}$, where $J=\begin{pmatrix} 0_m & -I_m\\I_m & 0_m\end{pmatrix}$. If $g=\begin{pmatrix} A & B\\C & D\end{pmatrix}\in\Sp_m(\Z)$ then $$\theta^{(m)}(L,(AZ+B)(CZ+D)^{-1})=\det(CZ+D)^{N/2}\theta^{(m)}(L,Z).$$
 Now one can define linear maps $\Theta^{(m)}: M(\C,K)\rightarrow M_{N/2}(\Sp_m(\Z))$ by
 $$\Theta^{(m)}\left(\sum_{j=1}^h x_je_j\right):=\sum_{j=1}^h\frac{x_j}{|\mathrm{Aut}(L_j)|}\,\theta^{(m)}(L_j),$$
 where $e_i([L_j])=\delta_{ij}$.
 \begin{prop}\label{eigentheta}
 \begin{enumerate}
 \item If $v_i\in M(\C,K)$ is an eigenvector for $H_K$, then $\Theta^{(m)}(v_i)$ (if non-zero) is a Hecke eigenform.
 \item Suppose that $\Theta^{(m)}(v_i)$ is non-zero, and that $(N/2)\geq m$. Let $t_p(\pi)=\diag(\beta_{1,p},\ldots,\beta_{N/2,p},\beta_{1,p}^{-1},\ldots,\beta_{N/2,p}^{-1})$ be the Satake parameter at $p$ for $v_i$, and let $(\diag(\alpha_{1,p},\ldots,\alpha_{m,p},1,\alpha_{1,p}^{-1},\ldots,\alpha_{m,p}^{-1})\in\SO(m+1,m)(\C)$ be the Satake parameter at $p$ of the automorphic representation of $\Sp_m(\A)$ generated by $\Theta^{(m)}(v_i)$. Then, as multisets,
     $$\{\beta_{1,p}^{\pm 1},\ldots,\beta_{(N/2),p}^{\pm 1}\}=$$
     $$\begin{cases} \{\alpha_{1,p}^{\pm 1},\ldots,\alpha_{m,p}^{\pm 1}\}\cup \{p^{\pm((N/2)-m-1)},\ldots,p^{\pm 1},1,1\} & \text{if $(N/2)>m$};\\\{\alpha_{1,p}^{\pm 1},\ldots,\alpha_{m,p}^{\pm 1}\} & \text{ if $(N/2)=m$. }\end{cases}$$
     \item If $8\mid N$ and $(N/2)\geq m+1$, a cuspidal Hecke eigenform $F\in S_{N/2}(\Sp_m(\Z))$ is in the image of $\Theta^{(m)}$ if and only if $L(\st,F,(N/2)-m)\neq 0$, where $L(\st,F,s)=\prod_p\left((1-p^{-s})^{-1}\prod_{i=1}^m((1-\alpha_{i,p}p^{-s})(1-\alpha_{i,p}^{-1}p^{-s}))^{-1}\right)$ is the standard $L$-function.
 \end{enumerate}
 \end{prop}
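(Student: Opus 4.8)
All three parts are instances of now-standard facts about the theta correspondence for the dual pair $(\mathrm{O}_L,\Sp_m)$, and the plan is to recast our classical set-up adelically and then invoke, with the normalisations specific to scalar weight $N/2$ and level one, the relevant results of Eichler, Rallis and B\"ocherer. Write $\varphi_i$ for the automorphic form on $\mathrm{O}_L(\A)$ attached to $v_i$, and let $\varphi\mapsto\theta(\varphi)$ be the global theta lift to $\Sp_m(\A)$ taken against the Schwartz--Bruhat function equal to the standard Gaussian at $\infty$ and to the characteristic function of $\widehat{L}^{\,m}$, where $\widehat{L}:=L\otimes\widehat{\Z}$, at the finite places; then, up to the constants $1/|\Aut(L_j)|$, the form $\Theta^{(m)}(v_i)$ is the classical Siegel modular form attached to $\theta(\varphi_i)$, and conversely $\theta$ carries cusp forms on $\Sp_m(\A)$ to (possibly zero) elements of $M(\C,K)$. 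For part (1), the theta kernel intertwines the spherical Hecke algebra of $\mathrm{O}_L(\Q_p)$ at $\mathrm{O}_L(\Z_p)$ with that of $\Sp_m(\Q_p)$ at $\Sp_m(\Z_p)$ (Eichler's commutation relation), so $\Theta^{(m)}(v_i)$ is a simultaneous eigenvector for all the Hecke operators so induced; combined with the Satake parameter computation of part (2) and strong multiplicity one, this forces $\Theta^{(m)}(v_i)$, when nonzero, to be a single Hecke eigenform.

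For part (2), every prime $p$ is unramified for $\pi_i$ (as $L$ is unimodular) and $A$ is split over $\Z_p$, so $\pi_{i,p}$ is an unramified representation of the split group $\mathrm{O}_N(\Q_p)$. Under the hypothesis $\Theta^{(m)}(v_i)\neq0$ its local theta lift to $\Sp_m(\Q_p)$ is nonzero and unramified, and Rallis's determination of the unramified correspondence in the range $N/2\geq m$ gives the following: the Langlands parameter of this lift in $\widehat{\Sp_m}=\SO_{2m+1}(\C)$ has associated $\GL_{2m+1}(\C)$-multiset $\{\alpha_{1,p}^{\pm1},\ldots,\alpha_{m,p}^{\pm1},1\}$, while the parameter of $\pi_{i,p}$ in $\SO_N(\C)$ is the direct sum of this with the Satake parameter $\{p^{\pm1},\ldots,p^{\pm((N/2)-m-1)},1\}$ of the trivial representation of the complementary split special orthogonal group of rank $N-2m-1$. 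Reading this off through $(\ref{Gross})$ and the normalisation of $L(\st,F,s)$ gives the displayed multiset identity, with $N/2=m$ the degenerate case in which no trivial block occurs.

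For part (3), since $N/2\geq m+1$ we have $N\geq 2m+2$, which places the relevant Siegel--Weil identity, hence the Rallis inner product formula for the theta lift from $\Sp_m(\A)$ to $\mathrm{O}_L(\A)$, within (or at the edge of) its range of validity. Given a cuspidal Hecke eigenform $F\in S_{N/2}(\Sp_m(\Z))$ with adelic avatar $\Phi_F$, one first shows that $F$ lies in the image of $\Theta^{(m)}$ if and only if the back-lift $\theta(\Phi_F)$ is nonzero: if $F=\Theta^{(m)}(v)$ then $0\neq\langle F,F\rangle=\langle v,\theta(\Phi_F)\rangle$ by adjointness of the two theta maps, so $\theta(\Phi_F)\neq0$; conversely, if $\theta(\Phi_F)\neq0$ it is right $K$-invariant and $\mathrm{O}_N(\R)$-spherical (the archimedean theta lift of the scalar holomorphic representation of weight $N/2$ of $\Sp_m(\R)$ to the compact group $\mathrm{O}_N(\R)$ being the trivial representation), hence an element of $M(\C,K)$, and a seesaw argument using part (1), the Hecke-equivariance of the back-lift, and multiplicity one identifies $\Theta^{(m)}(\theta(\Phi_F))$ with a nonzero multiple of $F$. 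Finally, the doubling method expresses $\langle\theta(\Phi_F),\theta(\Phi_F)\rangle$ as $L(\st,F,(N/2)-m)$ times a product of local zeta integrals and normalising zeta factors; these are finite and nonzero at the finite places (standard unramified computations) and at $\infty$ (an explicit integral for scalar weight $N/2$), so $\theta(\Phi_F)\neq0$ exactly when $L(\st,F,(N/2)-m)\neq0$.

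The obstacles are bookkeeping rather than conceptual. The first is to match the classical normalisations --- the weight $N/2$, level one, the precise degree-$m$ theta kernel $\theta^{(m)}(L,Z)$, and the normalisation and functional equation of $L(\st,F,s)$ --- so that the point of evaluation comes out exactly as $(N/2)-m$. The second, and more delicate, is to establish the non-vanishing of the archimedean doubling integral for scalar weight $N/2$, and to handle the boundary cases $N/2=m$ in part (2) and $N/2=m+1$ in part (3), where one sits precisely at the edge of the range in which the Siegel--Weil and Rallis inner product formulae are established (Weil's convergent case, as extended by Kudla--Rallis and Yamana, and, for the Hilbert and Hermitian analogues needed later in the paper, by Kuang and Y. Liu).
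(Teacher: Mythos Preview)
Your proposal is correct and aligned with the paper's treatment, which does not give a proof but simply records that (1) and (2) follow from a theorem of Rallis \cite[Remark 4.4(A)]{R} (as explained in \cite[\S 7.1]{CL}) and that (3) is B\"ocherer's theorem \cite[Theorem $4_1$]{Bo}. Your sketch correctly unpacks these citations into the Eichler/Rallis commutation relations, Rallis's unramified Satake-parameter computation for the dual pair $(\mathrm{O}_L,\Sp_m)$, and the Siegel--Weil/doubling (Rallis inner product) argument underlying B\"ocherer's result; the appeal to strong multiplicity one in (1) is not strictly needed, since the local computation in (2) already fixes all spherical Hecke eigenvalues.
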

 (1) and (2) follow from a theorem of Rallis \cite[Remark 4.4(A)]{R}, as explained in \cite[7.1]{CL}. (3) is a theorem of B\"ocherer \cite[Theorem $4_1$]{Bo}.

 The {\em degree} of $v_i$ is defined to be the smallest $m$ such that $\Theta^{(m)}(v_i)\neq 0$. Note that if $m\geq 1$, $\Phi(\Theta^{m}(v_i))=\Theta^{(m-1)}(v_i)$, where $\Phi$ is the Siegel operator, so this first non-zero theta series is cuspidal, except in the case that $v_i$ is a multiple of the all-ones vector, where $\Theta^{(m)}(v_i)$ is an Eisenstein series for all $1\leq m<(N/2)$, by Siegel's Main Theorem, and by convention the degree of $v_i$ is $0$.

 Following Nebe and Venkov, but with slightly different normalisation as in \cite[\S 12.5]{IY}, we define an inner product and multiplication on $M(\C,K)$ by
 $$(e_i,e_j):=\frac{1}{|\mathrm{Aut}(L_i)|}\delta_{ij}$$ and
 $$e_i\circ e_j:=\delta_{ij}e_i.$$
 Let $g_i$ be the degree of $v_i$, and let $F_i:=\Theta^{(g_i)}(v_i)$.
 The following is equivalent to \cite[Lemma 7.1]{I2}.
 \begin{prop}\label{nonzero}
 $$\langle \Theta^{(g_i+g_j)}(v_k)|_{\HH_{g_i}\times\HH_{g_j}},F_i\times F_j\rangle =\frac{\langle F_i,F_i\rangle\langle F_j,F_j\rangle}{(v_i,v_i)(v_j,v_j)}\,(v_k,v_i\circ v_j).$$
 In particular, $(v_k,v_i\circ v_j)\neq 0$ if and only if the left hand side is non-zero.
 \end{prop}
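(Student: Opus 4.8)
The plan is to reduce the identity to two standard facts --- the multiplicativity of Siegel theta series under restriction to block-diagonal period matrices, and the Hecke-equivariance of the maps $\Theta^{(m)}$ --- which is essentially the argument of Ikeda \cite[Lemma 7.1]{I2} (and Nebe--Venkov), in the present notation. First I would record the restriction formula: if $Z=\diag(Z_1,Z_2)$ with $Z_1\in\HH_a$, $Z_2\in\HH_b$, then splitting $\x=(\x_1,\x_2)\in L^a\times L^b=L^{a+b}$ gives $\tr(\langle\x,\x\rangle Z)=\tr(\langle\x_1,\x_1\rangle Z_1)+\tr(\langle\x_2,\x_2\rangle Z_2)$, hence $\theta^{(a+b)}(L,\diag(Z_1,Z_2))=\theta^{(a)}(L,Z_1)\,\theta^{(b)}(L,Z_2)$. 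Applying this with $(a,b)=(g_i,g_j)$ and unwinding the definition of $\Theta^{(m)}$, for $v_k=\sum_l x_le_l$ one gets
$$\Theta^{(g_i+g_j)}(v_k)|_{\HH_{g_i}\times\HH_{g_j}}=\sum_l\frac{x_l}{|\Aut(L_l)|}\,\theta^{(g_i)}(L_l)\otimes\theta^{(g_j)}(L_l).$$
Here $F_i=\Theta^{(g_i)}(v_i)$ and $F_j=\Theta^{(g_j)}(v_j)$ are cusp forms, because a first non-vanishing theta series is cuspidal (remark after Proposition~\ref{eigentheta}); so the Petersson product on $\HH_{g_i}\times\HH_{g_j}$ converges, and --- normalised as the product of the Petersson products of the two factors --- it satisfies $\langle\theta^{(g_i)}(L_l)\otimes\theta^{(g_j)}(L_l),F_i\times F_j\rangle=\langle\theta^{(g_i)}(L_l),F_i\rangle\langle\theta^{(g_j)}(L_l),F_j\rangle$. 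Thus
$$\langle\Theta^{(g_i+g_j)}(v_k)|_{\HH_{g_i}\times\HH_{g_j}},F_i\times F_j\rangle=\sum_l\frac{x_l}{|\Aut(L_l)|}\,\langle\theta^{(g_i)}(L_l),F_i\rangle\,\langle\theta^{(g_j)}(L_l),F_j\rangle,$$
and it remains to evaluate $\langle\theta^{(m)}(L_l),\Theta^{(m)}(v)\rangle$ when $v$ is an eigenvector with $G:=\Theta^{(m)}(v)$ a non-zero cusp form.

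Next I would carry out this evaluation by a Hecke argument. Write $v=\sum_l y_le_l$, and let $u\in M(\C,K)$ represent, via $(\,,\,)$, the linear functional $w\mapsto\langle\Theta^{(m)}(w),G\rangle$, so $\langle\Theta^{(m)}(w),G\rangle=(w,u)$ for all $w$. Each $T\in H_K$ is self-adjoint for $(\,,\,)$ and is intertwined by $\Theta^{(m)}$ with a self-adjoint Hecke operator $T'$ on the $\Sp_m$ side (this Hecke-equivariance is what lies behind Proposition~\ref{eigentheta}(1),(2)); hence $(w,Tu)=(Tw,u)=\langle\Theta^{(m)}(Tw),G\rangle=\langle\Theta^{(m)}(w),T'G\rangle$, and $T'G=T'\Theta^{(m)}(v)=\Theta^{(m)}(Tv)=\lambda_T\,G$ with $\lambda_T$ the $T$-eigenvalue of $v$, so $Tu=\lambda_T u$ for all $T$. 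Therefore $u$ is a simultaneous Hecke eigenvector sharing the eigenvalue system of $v$, and so $u\in\C v$ (in the cases considered the computed eigenvectors have pairwise distinct systems of Hecke eigenvalues, so the eigenspace containing $u$ is spanned by $v$). Writing $u=cv$ and evaluating the functional at $e_l$ and at $v$ gives $\langle\theta^{(m)}(L_l),G\rangle=\overline{c}\,\overline{y_l}$ and $\langle G,G\rangle=\overline{c}\,(v,v)$, hence $\overline{c}=\langle G,G\rangle/(v,v)$ and
$$\langle\theta^{(m)}(L_l),\Theta^{(m)}(v)\rangle=\frac{\langle G,G\rangle}{(v,v)}\,\overline{y_l}.$$

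Finally, substituting this into the displayed sum with $(m,v,G)=(g_i,v_i,F_i)$ and $(g_j,v_j,F_j)$, writing $v_i=\sum_l y_le_l$ and $v_j=\sum_l z_le_l$, and using $v_i\circ v_j=\sum_l y_lz_le_l$ and $(e_l,e_l)=|\Aut(L_l)|^{-1}$, one gets
$$\langle\Theta^{(g_i+g_j)}(v_k)|_{\HH_{g_i}\times\HH_{g_j}},F_i\times F_j\rangle=\frac{\langle F_i,F_i\rangle\langle F_j,F_j\rangle}{(v_i,v_i)(v_j,v_j)}\sum_l\frac{x_l\,\overline{y_l}\,\overline{z_l}}{|\Aut(L_l)|}=\frac{\langle F_i,F_i\rangle\langle F_j,F_j\rangle}{(v_i,v_i)(v_j,v_j)}\,(v_k,v_i\circ v_j),$$
which is the assertion; the ``in particular'' then follows at once, since $F_i,F_j\neq0$. (The $v_i$ may be chosen real, in which case all the conjugations disappear.) The step carrying the genuine content is $u\in\C v$ --- the Hecke-equivariance of the theta correspondence together with multiplicity one for the relevant eigenspaces; everything else is bookkeeping with the three normalisations (in $\Theta^{(m)}$, in $(\,,\,)$, and in $\circ$), together with the observation that $g_i,g_j$ must be the degrees so that $F_i,F_j$ are cuspidal and the Petersson products are defined.
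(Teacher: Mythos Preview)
The paper does not give a proof; it simply records that the proposition is equivalent to \cite[Lemma 7.1]{I2}. Your argument is correct and is essentially Ikeda's proof written out in the present normalisations: the splitting $\theta^{(a+b)}(L,\diag(Z_1,Z_2))=\theta^{(a)}(L,Z_1)\theta^{(b)}(L,Z_2)$, followed by the observation that the functional $w\mapsto\langle\Theta^{(m)}(w),F_i\rangle$ is represented (via $(\,,\,)$) by a Hecke eigenvector with the same eigenvalues as $v_i$, hence by a scalar multiple of $v_i$. The one hypothesis you invoke beyond the bare statement --- that the simultaneous Hecke eigenspace containing $v_i$ (and likewise $v_j$) is one-dimensional --- is genuinely used at the step $u\in\C v$, and it holds in every instance where the paper applies the proposition (the eigenvalues of $T_{\pp}$ being distinct in each computed example). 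Your flagging of this point, and of the cuspidality of $F_i,F_j$ needed for the Petersson products to converge, is appropriate.
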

 \begin{cor}\label{degbound}
 If $(v_k,v_i\circ v_j)\neq 0$ then $g_k\leq g_i+g_j$.
 \end{cor}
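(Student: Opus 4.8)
The plan is to derive Corollary \ref{degbound} directly from the ``in particular'' clause of Proposition \ref{nonzero}. The only content is the observation that a modular form whose Petersson pairing against some other form is non-zero must itself be non-zero, combined with the definition of the degree $g_k$ as the least $m$ with $\Theta^{(m)}(v_k)\neq 0$.

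Concretely, I would first treat the main case $g_i+g_j\geq 1$, so that the linear map $\Theta^{(g_i+g_j)}\colon M(\C,K)\to M_{N/2}(\Sp_{g_i+g_j}(\Z))$ defined above is available. Proposition \ref{nonzero} asserts that
\[
(v_k,v_i\circ v_j)\neq 0\iff \bigl\langle\Theta^{(g_i+g_j)}(v_k)|_{\HH_{g_i}\times\HH_{g_j}},\,F_i\times F_j\bigr\rangle\neq 0 .
\]
Given the hypothesis $(v_k,v_i\circ v_j)\neq 0$, the right-hand pairing is therefore non-zero; in particular its first argument, the restriction $\Theta^{(g_i+g_j)}(v_k)|_{\HH_{g_i}\times\HH_{g_j}}$, is not identically zero, so $\Theta^{(g_i+g_j)}(v_k)\neq 0$. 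By definition of the degree, the set of $m$ with $\Theta^{(m)}(v_k)\neq 0$ then contains $g_i+g_j$, so its minimum $g_k$ satisfies $g_k\leq g_i+g_j$, as required.

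It remains to dispose of the degenerate case $g_i=g_j=0$, where we want $g_k=0$. Here $v_i$ and $v_j$ are, by the convention fixing the degree, scalar multiples of the all-ones vector $e_1+\cdots+e_h$; using $e_l\circ e_m=\delta_{lm}e_l$ one sees that $v_i\circ v_j$ is again a multiple of $e_1+\cdots+e_h$, which spans the line of the trivial Hecke eigensystem. Since the Hecke operators are self-adjoint for $(\cdot,\cdot)$, the eigenvector $v_k$ is orthogonal to this line unless it lies on it, so $(v_k,v_i\circ v_j)\neq 0$ forces $v_k$ to be proportional to $e_1+\cdots+e_h$, whence $g_k=0=g_i+g_j$.

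I do not anticipate any genuine obstacle: the Corollary is a formal consequence of Proposition \ref{nonzero}, and the only points requiring a word of care---why the right-hand pairing being non-zero forces $\Theta^{(g_i+g_j)}(v_k)\neq 0$, and the trivial-eigenvector corner case---have both been addressed.
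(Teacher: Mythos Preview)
Your argument is correct and is exactly the (implicit) argument intended in the paper: the corollary is stated immediately after Proposition \ref{nonzero} with no separate proof, precisely because non-vanishing of the pairing forces $\Theta^{(g_i+g_j)}(v_k)\neq 0$ and hence $g_k\leq g_i+g_j$ by the definition of degree. The paper also points to \cite[Proposition 2.3]{NV} for an alternative route, but your derivation from Proposition \ref{nonzero} is the one the paper has in mind; your extra care with the degenerate case $g_i=g_j=0$ is harmless and not something the paper spells out.
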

 See \cite[Proposition 2.3]{NV} for an alternative approach. If $v_i=\sum_{t=1}^hc_{it}e_t$ then $(v_k,v_i\circ v_j)=\sum_{t=1}^h \frac{1}{|\mathrm{Aut}(L_t)|}\,c_{kt}c_{it}c_{jt},$ so it is easy to compute in any given case.

 \subsection{Ikeda and Miyawaki lifts}
 \begin{prop}\label{IkMiy} Let $\kappa, g$ be even natural numbers. Let $f\in S_{2\kappa-g}(\SL_2(\Z))$ be a normalised Hecke eigenform. Let $G\in S_{\kappa}(\Sp_r(\Z))$ be a Hecke eigenform, for $r<g$.
 \begin{enumerate}
 \item There exists a Hecke eigenform $F\in S_{\kappa}(\Sp_g(\Z))$ with standard $L$-function $$L(\st,F,s)=\zeta(s)\prod_{i=1}^g L(f,s+\kappa-i).$$
 \item The function $$\FFF_{f,G}(Z):=\int_{\Sp_r(\Z)\backslash\HH_r}F\left(\begin{pmatrix} Z & 0\\0 & W\end{pmatrix}\right)G(-\overline{W})(\det\Im W)^{\kappa-r-1}\,dW,$$ if non-zero, is a Hecke eigenform in $S_{\kappa}(\Sp_{g-r})$, with standard $L$-function $$L(\st,\FFF_{f,G},s)=L(\st,G,s)\prod_{i=1}^{g-2r}L(f,s+\kappa-r-i).$$
 \end{enumerate}
 \end{prop}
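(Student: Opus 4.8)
The plan is to recall, in outline, Ikeda's proofs of these two assertions. For (1) --- the lift conjectured by Duke and Imamoglu and proved by Ikeda --- put $k=\kappa-g/2$, so that $f\in S_{2k}(\SL_2(\Z))$ and one seeks a form in $S_{k+g/2}(\Sp_g(\Z))$. The first step is to pass to the half-integral weight side: by the Shimura correspondence in Kohnen's form, $f$ corresponds to a cusp form $h=\sum_{D}c(|D|)\,q^{|D|}$ of weight $k+\frac12$ in the plus space for $\Gamma_0(4)$, with the Kohnen--Zagier normalisation of its coefficients. One then defines $F$ through its Fourier coefficients: for a positive-definite half-integral symmetric matrix $T$ of size $g$, writing $\det(2T)=|D_T|\,\mathfrak{f}_T^{\,2}$ with $D_T$ a fundamental discriminant, set
$$a_F(T):=c(|D_T|)\,\mathfrak{f}_T^{\,k-1}\prod_p\widetilde{F}_p(T;\alpha_p),$$
where $\alpha_p$ is the Satake parameter of $f$ at $p$ (so that $a_p(f)=p^{(2k-1)/2}(\alpha_p+\alpha_p^{-1})$) and $\widetilde{F}_p(T;\,\cdot\,)$ is the normalised local Siegel series of $T$ at $p$, a Laurent polynomial in its argument.

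The heart of (1) is the assertion that $F(Z):=\sum_T a_F(T)\exp(2\pi i\,\tr(TZ))$ is genuinely a modular form of weight $\kappa$ for $\Sp_g(\Z)$. Ikeda proves this using Katsurada's explicit formula for the Siegel series together with an intricate induction on the degree, and this is by far the deepest step. Granting modularity, $F$ is automatically a Hecke eigenform, and one reads off that its Satake parameters at $p$ are $\{\alpha_p^{\pm1}p^{1/2},\alpha_p^{\pm1}p^{3/2},\dots,\alpha_p^{\pm1}p^{(g-1)/2}\}$ together with the trivial parameter $1$; feeding these into the standard $L$-factor $(1-p^{-s})^{-1}\prod_j(1-\alpha_{j,p}p^{-s})^{-1}(1-\alpha_{j,p}^{-1}p^{-s})^{-1}$ and regrouping the product yields exactly $\zeta(s)\prod_{i=1}^g L(f,s+\kappa-i)$.

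For (2) --- Ikeda's proof of Miyawaki's conjecture --- take $F\in S_\kappa(\Sp_g(\Z))$ to be the lift from (1), restrict it to the block-diagonal $\HH_{g-r}\times\HH_r\hookrightarrow\HH_g$, $(Z,W)\mapsto\diag(Z,W)$, and compute the Petersson inner product over $\Sp_r(\Z)\backslash\HH_r$ of $W\mapsto F(\diag(Z,W))$ with $G$, which is the integral displayed in the statement. That $\FFF_{f,G}$ is then a holomorphic modular form of weight $\kappa$ for $\Sp_{g-r}(\Z)$ is a formal consequence of the modularity of $F$: one expands $F|_{\HH_{g-r}\times\HH_r}$ as a weight-$\kappa$ form in $W$ with Fourier coefficients modular of weight $\kappa$ in $Z$, and projects onto the $G$-component. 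What requires proof is that $\FFF_{f,G}$, when nonzero, is a Hecke eigenform with the stated standard $L$-function; both follow from a pullback (doubling) identity of Garrett--B\"ocherer type, specialised by Ikeda to the case where $F$ is an Ikeda lift. That identity expresses the pullback of $F$, paired against $G$, in terms of $L(\st,G,s)$ times a product of $\GL_2$ $L$-functions of $f$, with the upshot that the Satake parameters of $\FFF_{f,G}$ at $p$ are those of $G$ together with $\{\alpha_p^{\pm1}p^{1/2},\alpha_p^{\pm1}p^{3/2},\dots,\alpha_p^{\pm1}p^{(g-2r-1)/2}\}$; reassembling the standard $L$-factor from these produces $L(\st,G,s)\prod_{i=1}^{g-2r}L(f,s+\kappa-r-i)$. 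The eigenform property falls out of the same computation, since Hecke operators commute with restriction to the diagonal followed by integration against the eigenform $G$.

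The main obstacle is the modularity statement inside (1): turning the explicitly prescribed Fourier series into an honest element of $S_\kappa(\Sp_g(\Z))$. Everything in (2) is, by contrast, a heavy but essentially formal consequence of (1) together with the doubling method; the one genuinely conditional ingredient there is the non-vanishing hypothesis on $\FFF_{f,G}$, which cannot be dropped.
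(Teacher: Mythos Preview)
Your outline is a reasonable sketch of Ikeda's arguments, but note that the paper itself does not prove this proposition at all: immediately after the statement it simply records that (1) is a theorem of Ikeda \cite{I3} (the lift conjectured by Duke and Imamoglu), with scaling fixed by a choice of half-integral weight form in Kohnen's plus space, and that (2) was also proved by Ikeda \cite{I2}, giving his construction of the form whose existence Miyawaki conjectured in the case $g=4$, $r=1$. So there is nothing to compare your argument against except the references.

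That said, one point in your sketch of (2) is not quite how Ikeda proceeds. You attribute the eigenform property and the standard $L$-function to a Garrett--B\"ocherer-type pullback identity for an Eisenstein series. In \cite{I2} Ikeda instead works directly with the explicit Fourier coefficients of the Ikeda lift $F$: restricting $F$ to $\HH_{g-r}\times\HH_r$ and expanding in Fourier--Jacobi type along $\HH_r$, he computes the action of Hecke operators on the resulting pairing with $G$ by exploiting the product formula $a_F(T)=c(|D_T|)\mathfrak{f}_T^{k-1}\prod_p\widetilde{F}_p(T;\alpha_p)$ and functional equations for the local Siegel series. The doubling method does not enter. Your description of (1), including the passage through Kohnen's plus space and the role of Katsurada's explicit Siegel series formula in establishing modularity, is accurate.
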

 (1) is a theorem of Ikeda \cite{I3}, and $F$ (whose existence was conjectured by Duke and Imamoglu) is the Ikeda lift $I^{(g)}(f)$. Its scaling is determined naturally by a choice of scaling of a half-integral weight form in Kohnen's plus space corresponding to $f$. (2) was also proved by Ikeda \cite{I2}, and gives his construction of a form whose existence was conjectured by Miyawaki in the case $g=4, r=1$ \cite{Mi}.
\section{Even unimodular $24$-dimensional quadratic forms over $\Q$}
In the case $N=24$, the genus of even unimodular lattices has $h=24$ classes, represented by the Niemeier lattices. Nebe and Venkov diagonalised the operator $T_2$, and found that it has $24$ distinct rational integer eigenvalues, shown in the table below \cite{NV}. We have listed the eigenvalues $\lambda_i(T_2)$ in descending order, for $1\leq i\leq 24$. Let $v_i$ and $\pi_i$ be the corresponding eigenvectors and automorphic representations, respectively. Chenevier and Lannes determined the $\pi_i$ in terms of Arthur's endoscopic classification of automorphic representations of classical groups \cite{CL}. The global Arthur parameters are listed in the final column of the table. Each one $A_i=\oplus_{k=1}^m\Pi_k[d_k]$ must pass the two tests that
$$c_{\infty}(A_i)(z)=\diag\left((z/\overline{z})^{11},(z/\overline{z})^{10},\ldots,(z/\overline{z})^{0},(z/\overline{z})^{-11},(z/\overline{z})^{-10},\ldots,(z/\overline{z})^{0}\right)$$
    and that $2^{11}\tr(t_2(A_i))=\lambda_i(T_2)$, as computed using neighbours. That would be enough to justify a conjecture that these global Arthur parameters are correct, but Chenevier and Lannes gave several proofs that they really are correct, for example by using Arthur's multiplicity formula applied to the group $\SO_{24}$.
\vskip10pt
\begin{center}
\begin{tabular}{|c|c|c|c|}\hline$\mathbf{i}$ & $\lambda_i\left(T_{2}\right)$ & degree & Global Arthur parameters \\\hline
 $\mathbf{1}$ & $8390655$ & $0$ & $[23]\oplus[1]$\\$\mathbf{2}$ & $4192830$ & $1$ & $\Sym^2\Delta_{11}\oplus[21]$\\$\mathbf{3}$ & $2098332$ & $2$ & $\Delta_{21}[2]\oplus[1]\oplus[19]$\\$\mathbf{4}$ & $1049832$ & $3$ & $\Sym^2\Delta_{11}\oplus\Delta_{19}[2]\oplus[17]$\\$\mathbf{5}$ & $533160$ & $4$ & $\Delta_{19}[4]\oplus[1]\oplus[15]$\\$\mathbf{6}$ & $519120$ & $4$ & $\Delta_{21}[2]\oplus\Delta_{17}[2]\oplus[1]\oplus[15]$\\$\mathbf{7}$ & $268560$ & $5$ & $\Sym^2\Delta_{11}\oplus\Delta_{19}[2]\oplus\Delta_{15}[2]\oplus[13]$\\$\mathbf{8}$ & $244800$ & $5$ & $\Sym^2\Delta_{11}\oplus\Delta_{17}[4]\oplus[13]$\\$\mathbf{9}$ & $145152$ & $6$ & $\Delta_{21}[2]\oplus\Delta_{15}[4]\oplus[1]\oplus[11]$\\$\mathbf{10}$ & $126000$ & $6$ & $\Delta_{21,13}[2]\oplus\Delta_{17}[2]\oplus[1]\oplus[11]$\\$\mathbf{11}$ & $99792$ & $6$ & $\Delta_{17}[6]\oplus[1]\oplus[11]$\\$\mathbf{12}$ & $91152$ & $7$ & $\Sym^2\Delta_{11}\oplus\Delta_{15}[6]\oplus[9]$\\$\mathbf{13}$ & $89640$ & $8$ & $\Delta_{15}[8]\oplus[1]\oplus[7]$\\$\mathbf{14}$ & $69552$ & $7$ & $\Sym^2\Delta_{11}\oplus\Delta_{19}[2]\oplus\Delta_{15}[2]\oplus\Delta_{11}[2]\oplus[9]$\\$\mathbf{15}$ & $51552$ & $8$ & $\Delta_{21,9}[2]\oplus\Delta_{15}[4]\oplus[1]\oplus[7]$\\$\mathbf{16}$ & $45792$ & $7$ & $\Sym^2\Delta_{11}\oplus\Delta_{17}[4]\oplus\Delta_{11}[2]\oplus[9]$\\$\mathbf{17}$ & $35640$ & $8$ & $\Delta_{19}[4]\oplus\Delta_{11}[4]\oplus[1]\oplus[7]$\\$\mathbf{18}$ & $21600$ & $8$ & $\Delta_{21}[2]\oplus\Delta_{17}[2]\oplus\Delta_{11}[4]\oplus[1]\oplus[7]$\\$\mathbf{19}$ & $17280$ & $9$ & $\Sym^2\Delta_{11}\oplus\Delta_{19,7}[2]\oplus\Delta_{15}[2]\oplus\Delta_{11}[2]\oplus[5]$\\$\mathbf{20}$ & $5040$ & $9$ & $\Sym^2\Delta_{11}\oplus\Delta_{19}[2]\oplus\Delta_{11}[6]\oplus[5]$\\$\mathbf{21}$ & $-7920$ & $10$ & $\Delta_{21,5}[2]\oplus\Delta_{17}[2]\oplus\Delta_{11}[4]\oplus[1]\oplus[3]$\\$\mathbf{22}$ & $-16128$ & $10$ & $\Delta_{21}[2]\oplus\Delta_{11}[8]\oplus[1]\oplus[3]$\\$\mathbf{23}$ & $-48528$ & $11$ & $\Sym^2\Delta_{11}\oplus\Delta_{11}[10]\oplus[1]$\\$\mathbf{24}$ & $-98280$ & $12$ & $\Delta_{11}[12]$\\\hline
 \end{tabular}
 \end{center}
\vskip10pt
 The degrees were proved by Nebe and Venkov \cite{NV}, with the exception of cases $\mathbf{19}$ and $\mathbf{21}$, where the degrees they conjectured were later proved by Chenevier and Lannes \cite{CL}. As pointed out in \cite[1.4]{CL}, $20$ out of the $24$ global Arthur parameters (all those not involving any $\Delta_{a,b}$) may be proved as a direct consequence of work of Ikeda \cite[\S 7]{I2}. For these cases, he identified $\Theta^{(g_i)}(v_i)$ in terms of Ikeda lifts and Miyawaki lifts. For example, for $\mathbf{5}$, letting $\kappa=12$ and $g=4$, Proposition \ref{IkMiy}(1) gives us an Ikeda lift $F=I^{(4)}(f)\in S_{12}(\Sp_4(\Z))$, where $f\in S_{20}(\SL_2(\Z))$. Proposition \ref{eigentheta}(3) (B\"ocherer's Theorem), combined with $L(\st,F,s)=\zeta(s)\prod_{i=1}^g L(f,s+\kappa-i)$, shows that $F=\Theta^{(4)}(v)$ for some $v\in M(\C,K)$, necessarily an eigenvector $v=v_i$, using Proposition \ref{eigentheta}(1) and the fact that all the eigenspaces in $M(\C,K)$ are $1$-dimensional. The values of $\lambda_i(T_2)$ show that it can only be $i=5$. The formula $L(\st,F,s)=\zeta(s)\prod_{i=1}^g L(f,s+\kappa-i)$ implies Satake parameters for the associated automorphic representation of $\Sp_4(\A)$ that make $\Delta_{19}[4]\oplus[1]$ its global Arthur parameter. The extra $\oplus[15]$ in the global Arthur parameter of $\pi_5$ is accounted for by the extra $\{p^{\pm 7},\ldots,p^{\pm 1},1\}$ in Proposition \ref{eigentheta}(2) (Rallis's Theorem), with $N=24, m=4$.
\section{Preliminaries on even unimodular lattices over real quadratic fields}
Let $E$ be a real quadratic field, with ring of integers $\OO_E$. Let $L$ be an $\OO_E$-lattice in $V\simeq E^N$, with totally positive-definite quadratic form $\x\mapsto \frac{1}{2}\langle \x,\x\rangle$. We may define an orthogonal group scheme $O_L$ over $\OO_E$, a genus, algebraic modular forms $M(\C,K)$, Hecke operators $T_{\pp}$, $v_i$ and $\pi_i$ very much as before. We assume that $L$ is even ($\langle\x,\x\rangle\in 2\OO_E\,\,\forall\x\in L$), and unimodular ($L^*=L$, where $L^*:=\{\y\in V|\,\,\langle \y,\x\rangle\in\OO_E\,\,\forall\x\in L\}$). The following result of Scharlau is worth noting.
\begin{prop}\cite[Proposition 3.1]{Sc} If $4\mid N$, there is a unique genus of free, even unimodular lattices of determinant $1$.
\end{prop}
If $\OO_E$ has class number $1$, the word ``free'' is superfluous, and if the narrow class number is equal to the class number then ``determinant $1$'' is superfluous, since this is the determinant of a Gram matrix, well-defined modulo squares of units, but the determinant of a totally positive-definite unimodular lattice is a totally positive unit, necessarily a square under the given condition.

There will be local Langlands parameters $c_{\infty_1}(\pi_i),c_{\infty_2}(\pi_i):W_{\R}\rightarrow\SO_N(\C)$, for the two infinite places $\infty_1,\infty_2$, and $c_{\pp}(\pi_i):W_{E_{\pp}}\rightarrow\SO_N(\C)$ (with $\Frob_{\pp}\mapsto t_{\pp}(\pi_i)$) for each finite prime $\pp$. In the global Arthur parameters, cuspidal automorphic representations of $\GL_{n_k}(\A)$ are replaced by cuspidal automorphic representations of $\GL_{n_k}(\A_E)$, modular forms by Hilbert modular forms.
In order for everything to work as before, we must check in each case we look at that, for every finite prime $\pp$, $\SO_L/F_{\pp}$ is split and $\SO_L/\OO_{\pp}$ is reductive (hence, by \cite[3.8.1]{Ti}, $\SO_L(\OO_{\pp})$ is a hyperspecial maximal compact subgroup). This is necessary for the relation between $\pp$-neighbours and the Hecke operators $T_{\pp}$, for the equation (\ref{Gross}) for Hecke eigenvalues, and for the application of Rallis's theorem to Proposition \ref{eigentheta}.

If the norm of a fundamental unit is $-1$ (e.g if $\OO_E$ has narrow class number $1$, with every ideal generated by a totally positive element), then the different $\mathfrak{D}$ is generated by a totally positive element $\delta$. Let $\sigma_1,\sigma_2$ be the two real embedddings of $E$. We may define the theta series of degree $m$ of $L$ as
$$\theta^{(m)}(L)=\sum_{\x\in L^m}\exp\left(\pi i\tr\left(\sigma_1\left(\langle\x,\x\rangle/\delta\right) Z_1+\sigma_2\left(\langle\x,\x\rangle/\delta\right) Z_2\right)\right),$$ where $Z=(Z_1,Z_2)\in \HH_m^2$.

If the norm of a fundamental unit is $1$ then $\mathfrak{D}$ has a generator $\delta$ with $\sigma_1(\delta)>0$ and $\sigma_2(\delta)<0$, and we define $\theta^{(m)}(L)$ by the same formula, but now with $(Z_1,Z_2)\in\HH_m\times\HH_m^-$, where $\HH_m^-:=\{Z\in M_m(\C):\,\,{}^tZ=Z,\,\Im(Z)<0\}$. Then in either case $\theta^{(m)}(L)\in M_{N/2}(\Sp_m(\OO_E))$, where the $N/2$ is parallel weight $(N/2,N/2)$, cf. \cite[\S 4]{Hu},\cite[p.371]{HH}. Thus, if $g=\begin{pmatrix} A & B\\C & D\end{pmatrix}\in\Sp_m(\OO_E)$ and we denote $\sigma_1(A)=A_1$ etc., then $$\theta^{(m)}(L,(AZ+B)(CZ+D)^{-1})$$
$$=\det(C_1Z_1+D_1)^{N/2}\det(C_2Z_2+D_2)^{N/2}\theta^{(m)}(L,(Z_1,Z_2)),$$
where $$(AZ+B)(CZ+D)^{-1}:=((A_1Z_1+B_1)(C_1Z_1+D_1)^{-1},(A_2Z_2+B_2)(C_2Z_2+D_2)^{-1}).$$
 Again one can define linear maps $\Theta^{(m)}: M(\C,K)\rightarrow M_{N/2}(\Sp_m(\OO_E))$ by
 $$\Theta^{(m)}\left(\sum_{j=1}^h x_je_j\right):=\sum_{j=1}^h\frac{x_j}{|\mathrm{Aut}(L_j)|}\,\theta^{(m)}(L_j).$$
 Parts (1) and (2) of Proposition \ref{eigentheta} are just as before. Note that we are concerned with automorphic representations of $\Sp_m(\A_E)$, not $\GSp_m(\A_E)$ so we have strong approximation even when the narrow class number of $E$ is not $1$. Thus it makes sense to talk of an individual function $F$ on $\HH_m^2$ or $\HH_m\times \HH_m^-$ being a Hecke eigenform (interchangeable with an automorphic form on $\Sp_m(\A_E)$, as explained in \cite[p.926--7]{Ku}), but this does not include the Hecke operators usually denoted $T(\pp)$, which only exist for $\GSp_m$. In place of (3) we have
 \begin{prop}\label{eigentheta3}
 If $N/2>m+1$ (with $N$ such that we have an even unimodular lattice $L$, with reference to whose genus the maps $\Theta^{(m)}$ are defined) then a Hecke eigenform $F\in S_{N/2}(\Sp_m(\OO_E))$ is in the image of $\Theta^{(m)}$
 if $L(\st,F,(N/2)-m)\neq 0$.
 \end{prop}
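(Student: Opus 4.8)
The plan is to obtain this as (the relevant instance of) Kuang's Hilbert--Siegel analogue \cite{Ku} of B\"ocherer's basis theorem. Only the ``if'' direction of B\"ocherer's theorem is being claimed, and Kuang's hypotheses force the strict inequality $N/2>m+1$ (rather than the $N/2\geq m+1$ of Proposition \ref{eigentheta}(3)): over the totally real field $E$ one wants the standard $L$-value in question to sit at a point where the Siegel--Weil and doubling identities below hold in their simplest, absolutely convergent form, with no recourse to regularisation or second-term identities.

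First I would reduce to a non-orthogonality statement. Parts (1) and (2) of Proposition \ref{eigentheta} hold in the present setting, so $\Theta^{(m)}$ intertwines the Hecke actions for those operators that exist on $\Sp_m$; by strong approximation for $\Sp_m(\A_E)$ one works on a single copy of $\Sp_m(\OO_E)\backslash\HH_m^2$, respectively $\Sp_m(\OO_E)\backslash(\HH_m\times\HH_m^-)$, as in \cite[pp.~926--7]{Ku}. Hence $W:=\Theta^{(m)}(M(\C,K))\cap S_{N/2}(\Sp_m(\OO_E))$ is a Hecke-stable subspace of the cusp forms. Because the Hecke action on cusp forms is semisimple for the Petersson product and $W$ is Hecke-stable, and because $F$ is determined up to scalar by its Hecke eigenvalues (by multiplicity one for $\Sp_m(\A_E)$, and at any rate in the cases to which we apply this, e.g. Ikeda lifts, the eigenspace of $F$ is one-dimensional), it suffices to prove $F\not\perp W$, i.e. $\langle\theta^{(m)}(L_j),F\rangle\neq 0$ for some $j$.

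For this I would invoke the Rallis inner product (doubling) identity over $E$, which is the substance of Kuang's work. The adjoint of $\Theta^{(m)}$ for the inner products $(e_i,e_j)=\delta_{ij}/|\Aut(L_i)|$ on $M(\C,K)$ and the Petersson product on cusp forms sends $F$ to $\sum_j\overline{\langle\theta^{(m)}(L_j),F\rangle}\,e_j$, so
$$\|\Theta^{(m)*}F\|^2=\sum_{j=1}^h\frac{1}{|\Aut(L_j)|}\,\bigl|\langle\theta^{(m)}(L_j),F\rangle\bigr|^2,$$
and the left-hand side is the integral, over the (finite) adelic quotient attached to $O_L$, of the square-norm of the theta lift of $F$ to $O_L$. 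Pairing this against the Siegel--Weil expression for the theta integral (here the $\SO_L$-integral really is the full Siegel Eisenstein series, since $\SO_L$ is split and reductive at every finite place, i.e. by Siegel's main theorem for the genus of $L$) and then using the Piatetski-Shapiro--Rallis integral representation of the standard $L$-function gives
$$\sum_{j=1}^h\frac{1}{|\Aut(L_j)|}\,\bigl|\langle\theta^{(m)}(L_j),F\rangle\bigr|^2=c\cdot L(\st,F,(N/2)-m)\cdot\langle F,F\rangle,$$
with $c$ a product of archimedean and volume constants which is nonzero in the range $N/2>m+1$. What has to be checked here is only that our set-up --- $L$ even unimodular over $\OO_E$, the parallel weight $(N/2,N/2)$ theta series of \S 4, the normalisations --- matches Kuang's hypotheses, and that the special point comes out at $s=(N/2)-m$. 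Granting the identity: if $L(\st,F,(N/2)-m)\neq 0$, then the right-hand side is nonzero (as $\langle F,F\rangle\neq 0$ for the nonzero cusp form $F$), so $\langle\theta^{(m)}(L_j),F\rangle\neq 0$ for some $j$, whence $F\not\perp W$ and, by the previous paragraph, $F\in W\subseteq\Theta^{(m)}(M(\C,K))$.

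The main obstacle is the second displayed identity: establishing the precise Rallis/B\"ocherer doubling formula over $E$, with the right constants and special point, which is where the strict hypothesis $N/2>m+1$ is used and where Kuang's paper does the real work. The Hecke-theoretic reduction and the final deduction are formal, the one point of care being that ``Hecke eigenform'' is understood in the $\Sp_m$-sense, as in \cite{Ku}, not the $\GSp_m$-sense.
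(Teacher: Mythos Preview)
Your proposal is correct and takes essentially the same approach as the paper: both derive the proposition from Kuang's Hilbert--Siegel analogue of B\"ocherer's theorem, via the doubling/Rallis inner product mechanism. You supply more of the underlying argument (reduction to non-orthogonality, the Rallis inner product identity) than the paper does; the paper simply cites Kuang and adds two remarks you omit. First, Kuang imposes $8\mid N$ in order to construct an even unimodular form, which is unnecessary here since we begin with one. Second, Kuang's Theorem~2 actually omits the hypothesis $L(\st,F,(N/2)-m)\neq 0$, his Proposition~5.4 claiming this non-vanishing is automatic from that of the local factors; the paper points out this is false, giving the counterexample $E=\Q$, $N=32$, $F=I^{(14)}(f)$ for $f\in S_{18}(\SL_2(\Z))$, where $L(\st,F,2)$ contains the vanishing central factor $L(f,9)$. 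So the $L$-value hypothesis in the proposition is a genuine correction to the cited source, not merely a convenient sufficient condition.

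One minor point: your reduction step invokes multiplicity one for $\Sp_m(\A_E)$ to pass from $F\not\perp W$ to $F\in W$. This is not known in general, and while your hedge (``at any rate in the cases to which we apply this'') is honest, the cleaner route is the one implicit in Kuang and B\"ocherer: the identity shows the theta lift of $F$ to $O_L(\A_E)$ is nonzero, and then one theta-lifts back and uses the tower property (or seesaw) to recover $F$ itself, not merely something in its eigenspace. This avoids any appeal to multiplicity one.
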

 This is based on work of Kuang \cite{Ku}. We do not need his condition $8\mid N$, whose purpose was to construct something like an even unimodular quadratic form, given that we start with one. His Theorem 2 omits the condition $L(\st,F,(N/2)-m)\neq 0$, and his Proposition 5.4 appears to claim that the non-vanishing follows automatically from that of the local factors. But the example where $E=\Q$ (he works in the setting of any totally real field), $N=32$, $\kappa=16, m=g=14$, $f\in S_{18}(\SL_2(\Z))$ and $F=I^{(14)}(f)\in S_{16}(\Sp_{14}(\Z))$ shows that this is not so. Here $L(\st,F,2)=\zeta(2)\prod_{i=1}^{14} L(f,18-i),$ which includes the vanishing factor $L(f,9)$.

 The notion of degree, and Proposition \ref{nonzero}, carry over in the obvious fashion, as do the statements about Ikeda lifts and Miyawaki lifts. Ikeda lifts for Hilbert modular forms were constructed by Ikeda and Yamana \cite{IY}, and the application to Miyawaki lifts of Hilbert-Siegel modular forms has been worked out in detail by Atobe \cite{At}.
 The following is from Corollaries 1.4 and 11.3 in \cite{IY}.
 \begin{prop}\label{ilifttheta} Given $N, L$ and $M(\C,K)$ as above, if $E$ is of narrow class number $H=1$ suppose that $f\in S_{N/2}(\SL_2(\OO_E))$ is a Hecke eigenform, with associated cuspidal automorphic representation $\Delta_{(N/2)-1}$ of $\GL_2(\A_E)$. More generally, in place of $f$ consider the appropriate $H$-tuple of functions on $\HH^2$ representing an automorphic form on $\GL_2(\A_E)$ that is right-invariant under $\prod \GL_2(\OO_{\pp})$ and has components at the infinite places corresponding to weight $N/2$, say $f\in S_{N/2}(\GL_2(\A_E),\prod \GL_2(\OO_{\pp}))$.
 \begin{enumerate}
 \item There exists $\pi_i$ with global Arthur parameter $\Delta_{(N/2)-1}[N/2]$.
 \item If $L(f,N/4)\neq 0$ then $\Theta^{(N/2)}(v_i)=I^{(N/2)}(f)$, up to scalar multiples, whereas if $L(f,N/4)=0$ then $\Theta^{(N/2)}(v_i)=0$.
 \end{enumerate}
 \end{prop}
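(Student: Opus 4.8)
The plan is to reconstruct the argument of Ikeda and Yamana, whose essential inputs are: their construction of the Hilbert Ikeda lift; Arthur's multiplicity formula in the form of Ta\"ibi \cite{Tai}; the Satake-parameter bookkeeping already recorded in Proposition \ref{eigentheta}; and a Rallis inner product formula at the boundary of the Weil-convergent range. First I would invoke the construction: from $f$ one builds a non-zero Hilbert--Siegel cusp eigenform $F:=I^{(N/2)}(f)$ of parallel weight $N/2$ and degree $N/2$ for $\Sp_{N/2}(\OO_E)$, with standard $L$-function $\zeta_E(s)\prod_{i=1}^{N/2}L(f,s+(N/2)-i)$; equivalently, the automorphic representation $\Sigma$ of $\Sp_{N/2}(\A_E)$ it generates is everywhere unramified with Satake parameter $t_{\pp}(\Sigma)=t_{\pp}(\Delta_{(N/2)-1}[N/2])\oplus\{1\}$ at each finite $\pp$, and with the scalar weight-$N/2$ holomorphic discrete series at each real place. (Here $I^{(N/2)}$ makes sense precisely because $4\mid N$, so $N/2$ is even.)

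For part (1) I would argue by classification, so as to also cover the case where the $L$-value below vanishes. The formal sum $\psi:=\Delta_{(N/2)-1}[N/2]$ is a legitimate elliptic $A$-parameter for $\SO_L$: $\Delta_{(N/2)-1}$ is self-dual of symplectic type, $[N/2]$ is of symplectic type since $N/2$ is even, so $\psi$ is self-dual of orthogonal type with $2(N/2)=N$; it is unramified at every finite $\pp$; and its $A$-parameters at $\infty_1,\infty_2$ are exactly those forced on a scalar-valued form. As $\psi$ consists of a single block, Arthur's multiplicity formula --- applied exactly as for the parameter $\Delta_{11}[12]$ in the Niemeier case \cite{CL} --- shows that it contributes to the discrete spectrum of $\SO_L(\A_E)$ through a representation unramified at every finite place and trivial at the real places, with multiplicity one. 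By \cite[6.4.7]{CL} this yields an automorphic representation $\pi_i$ of $O_L(\A_E)$ with a $K$-fixed vector and trivial at infinity, hence an eigenvector $v_i\in M(\C,K)$, whose $T_{\pp}$-eigenvalues match the computed ones via the analogue of (\ref{Gross}). (In the non-vanishing branch of part (2), part (1) also falls out of the construction of a non-zero theta lift.)

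For part (2) I would first identify $\Theta^{(N/2)}(v_i)$ as a form, then decide when it vanishes. If $\Theta^{(N/2)}(v_i)\neq0$, then by Proposition \ref{eigentheta}(1) it is a Hecke eigenform in $M_{N/2}(\Sp_{N/2}(\OO_E))$; it is in fact cuspidal, since $\Phi\Theta^{(N/2)}(v_i)=\Theta^{(N/2-1)}(v_i)=0$. Indeed, were $\Theta^{(m)}(v_i)$ non-zero for $m=(N/2)-1$, Proposition \ref{eigentheta}(2) (the case $(N/2)>m$) would force $1$ into the Satake multiset $\{\beta_{j,p}^{\pm1}\}$ of $\pi_i$; but each entry of that multiset has the shape $\alpha_p^{\pm1}p^{j}$ with $|\alpha_p|=1$ (Ramanujan for $\Delta_{(N/2)-1}$) and $j$ a nonzero half-integer (as $N/2$ is even), hence is never $1$. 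Next, by Proposition \ref{eigentheta}(2) in the equality case $(N/2)=m$, the Satake parameters of $\Theta^{(N/2)}(v_i)$ agree with those of $\Sigma=I^{(N/2)}(f)$ at every $\pp$; since the Ikeda lift is the unique everywhere-unramified holomorphic Hecke eigenform of parallel weight $N/2$ and degree $N/2$ in its near-equivalence class, $\Theta^{(N/2)}(v_i)=c\,I^{(N/2)}(f)$ for some $c\in\C$. Finally, to decide whether $c=0$, I would invoke the Rallis inner product formula for the dual pair $(O_L,\Sp_{N/2})$ over $E$: $\|\Theta^{(N/2)}(v_i)\|^2$ equals a product of explicit local zeta integrals --- non-zero at our unramified and archimedean data --- times a special value of the standard $L$-function of $\pi_i$. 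Unwinding this value through $\psi$ produces a Dedekind zeta factor and shifted completed values of $L(f,\cdot)$, each of which is forced to be a non-zero number --- by absolute convergence and edge non-vanishing of Hecke $L$-functions, the archimedean zeta integrals absorbing the $\Gamma$-factor trivial zeros and the Dedekind zeta pole --- except for the single near-central value $L(f,N/4)$. Hence $\Theta^{(N/2)}(v_i)\neq0$ if and only if $L(f,N/4)\neq0$; together with the previous sentence, this proves (2).

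The main obstacle is this last step. The doubling integral here lives at $n=\dim=2m$, the boundary of the range in which the Siegel--Weil formula converges, so the inner-product identity needs the regularised Siegel--Weil formula and the second-term identity (in the manner of Kudla--Rallis, and in the generality underlying \cite[Cor.~11.3]{IY}), together with the archimedean computation identifying the theta lift of the scalar weight-$N/2$ holomorphic discrete series of $\Sp_{N/2}(\R)$ with the trivial representation of the compact $O_N(\R)$ --- which is also what guarantees that the global theta lift lies in $M(\C,K)$ to begin with, and what makes Proposition \ref{eigentheta}(2) applicable with no extra Satake factors. Granting these ingredients, to be cited from \cite{IY} (Corollaries 1.4 and 11.3) rather than reproved, the steps above assemble the two assertions.
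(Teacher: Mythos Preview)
The paper does not prove this proposition itself; the sentence immediately preceding the statement reads ``The following is from Corollaries 1.4 and 11.3 in \cite{IY}'', and no further argument is given. Your sketch is therefore more detailed than the paper's treatment, and is in outline a correct reconstruction of what underlies those corollaries in Ikeda--Yamana: the construction of the Hilbert--Siegel Ikeda lift, Arthur's multiplicity formula (in the form of Ta\"ibi \cite{Tai}) for part (1), and the Rallis inner product formula at the equal-rank boundary for part (2). Your closing paragraph, acknowledging that the regularised Siegel--Weil/second-term identity and the archimedean theta correspondence are to be cited from \cite{IY} rather than reproved, is exactly right and matches how the paper handles it.

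Two small comments on the details. First, your identification of $\Theta^{(N/2)}(v_i)$ with a scalar multiple of $I^{(N/2)}(f)$ invokes a multiplicity-one claim for the near-equivalence class of the Ikeda lift inside $S_{N/2}(\Sp_{N/2}(\OO_E))$; this is not automatic from general principles and is among the things Ikeda--Yamana themselves establish, so your plan to cite \cite{IY} for it is the correct one rather than a shortcut. Second, your cuspidality argument (ruling out $\Theta^{(N/2-1)}(v_i)\neq 0$ via Ramanujan for $f$) works, since the Ramanujan bound is known for parallel-weight Hilbert cusp forms; alternatively one can argue directly from the shape of the Arthur parameter $\Delta_{(N/2)-1}[N/2]$, which admits no summand of the form $[d]$, whereas Proposition~\ref{eigentheta}(2) with $m<N/2$ would force one.
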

\section{Even unimodular $8$-dimensional quadratic forms over $\Q(\sqrt{2})$}
Takada \cite{Tak} showed that if $E=\Q(\sqrt{2})$ (for which $H=1$) then the genus of even unimodular $\OO_E$-lattices contains a single class if $N=4$ (in which case there will be a single $v_1=(1)$, $\pi_1$ of global Arthur parameter $[1]\oplus [3]$). Hsia and Hung \cite{HH} proved that there are $6$ classes if $N=8$.
These were considered by Ikeda and Yamana \cite[\S\S 12.4,12.5]{IY}. They took the matrix from \cite{HH} representing $T_{(\sqrt{2})}$ with respect to the basis $\{e_1,\ldots,e_h\}$ for $M(\C,K)$, and computed its eigenvalues and eigenvectors. The eigenvalues are in the table below. The global Arthur parameters follow, using Proposition \ref{eigentheta}(2), from their determination of all the $\Theta^{(g_i)}(v_i)$. (We know that $O_L$ is split over each $E_{\pp}$ and reductive over each $\OO_{\pp}$, since one choice of $L$ is $E_8\otimes_{\Z}\OO_E$.)

First, since $v_1={}^t(1,\ldots,1)$, $\theta^{(m)}(v_1)$ is an Eisenstein series for all $m$ with $1\leq m<(N/2)-1=3$, by the Siegel-Weil formula. The $[1]\oplus [7]$ then follows from Proposition \ref{eigentheta}(2). The space $S_4(\SL_2(\OO_E))$ is spanned by a single form $g$, with associated $\Delta_3$. Using Proposition \ref{ilifttheta}(1), there exists some $\pi_i$ with global Arthur parameter $\Delta_3[4]$, which can only be $\pi_2$, and if one wants the theta series too then Proposition \ref{ilifttheta}(2) gives $\Theta^{(4)}(v_2)=I^{(4)}(g)$. (Magma gives $L(g,2)\approx 0.440328\neq 0$.) The space $S_6(\SL_2(\OO_E))$ is spanned by Galois conjugate forms $f_1,f_2$, with associated cuspidal automorphic representations of $\GL_2(\A_E)$ both denoted $\Delta_5^{(2)}$. Both $I^{(2)}(f_1)$ and $I^{(2)}(f_2)$ are in the image of $\Theta^{(2)}$, by Proposition \ref{eigentheta3}. This accounts for $\pi_5$ and $\pi_6$. Similarly $g$ is in the image of $\Theta^{(1)}$, which accounts for $\pi_4$, recalling that the standard $L$-function of $g$ is (a translate of) its symmetric square $L$-function. Finally, Ikeda and Yamana use Proposition \ref{nonzero} to show that $g_3=3$, and prove that $\Theta^{(3)}(v_3)=\mathcal{F}_{I^{(4)}(g),g}$. (Then we may use Proposition \ref{IkMiy}(2) for the global Arthur parameter.)
\vskip5pt
\begin{center}
\begin{tabular}{|c|c|c|c|}\hline$\mathbf{i}$ & $\lambda_i\left(T_{(\sqrt{2})}\right)$ & degree & Global Arthur parameters \\\hline
 $\mathbf{1}$ & $135$ & $0$ & $[1]\oplus [7]$\\$\mathbf{2}$ & $-30$ & $4$ & $\Delta_3[4]$\\$\mathbf{3}$ & $-8$ & $3$ & $\Sym^2\Delta_3\oplus\Delta_3[2]\oplus [1]$\\$\mathbf{4}$ & $58$ & $1$ & $\Sym^2\Delta_3\oplus [5]$\\$\mathbf{5}$ & $33+3\sqrt{73}$ & $2$ & $\Delta_5^{(2)}[2]\oplus [3]$\\$\mathbf{6}$ & $33-3\sqrt{73}$ & $2$ & "\\\hline
 \end{tabular}
 \end{center}

\section{Even unimodular $8$-dimensional quadratic forms over $\Q(\sqrt{5})$}
Maass \cite{Ma} showed that if $E=\Q(\sqrt{5})$ (again $H=1$) then the genus of even unimodular $\OO_E$-lattices contains a single class if $N=4$ (in which case there will be a single $v_1=(1)$, $\pi_1$ of global Arthur parameter $[1]\oplus [3]$), and $2$ classes if $N=8$. In this latter case, we computed the matrices representing the neighbour operators $T_{(\sqrt{5})}$ and $T_{(2)}$ to be $\begin{pmatrix} 12456 & 7200\\12096 & 7560\end{pmatrix}$ and $\begin{pmatrix} 3650 & 1875\\3150 & 2175\end{pmatrix}$, respectively. For this, and similar computations referred to in later sections, we used Magma code written by M. Kirschmer, available at \url{http://www.math.rwth-aachen.de/~Markus.Kirschmer/}. The eigenvalues are in the table below. One eigenvector is $v_1={}^t(1,1)$, with $\pi_1$ of global Arthur parameter $[1]\oplus [7]$. Note that the computed $19656$ matches $5^{(8/2)-1}\tr(\diag(5^3,5^2,5,1,5^{-3},5^{-2},5^{-1},1))=5^3+\frac{5^7-1}{5-1}$. The other eigenvector is ${}^t(-25,42)$. Using Magma again, the space $S_6(\SL_2(\OO_E))$ is spanned by a single form $f$, on which the eigenvalues of the (Hilbert modular) Hecke operators $T_{(\sqrt{5})}$ and $T_{(2)}$ are $-90$ and $20$, respectively. Let $F=I^{(2)}(f)\in S_4(\Sp_2(\OO_E))$ ($\kappa=(N/2)=4, g=m=2, 2\kappa-g=6$). Then $(N/2)=4>3=m+1$, and $L(\st,F,(N/2)-m)=\zeta(2)L(f,5)L(f,4)\neq 0$, so by Proposition \ref{eigentheta3}, $F$ is in the image of $\Theta^{(2)}$, say $F=\Theta^{(2)}(v_i)$. It follows from Proposition \ref{eigentheta}(2) that $\pi_i$ has global Arthur parameter $\Delta_5[2]\oplus [1]\oplus [3]$, and the computed Hecke eigenvalue shows that it can only be $\pi_2$.
Indeed, if $t_{(\sqrt{5})}(\Delta_5)=\diag(\alpha,\alpha^{-1})$ (so $5^{5/2}(\alpha+\alpha^{-1})=-90$), then
$$5^3((\alpha+\alpha^{-1})(5^{-1/2}+5^{1/2})+1+(5^{-1}+1+5))=(-90)(1+5)+5^2(1+5+5^2)+5^3=360.$$
We could reach the same conclusions using eigenvalues of $T_{(2)}$ instead of $T_{(\sqrt{5})}$.
\vskip5pt
\begin{center}
\begin{tabular}{|c|c|c|c|c|}\hline$\mathbf{i}$ & $\lambda_i\left(T_{(\sqrt{5})}\right)$ & $\lambda_i\left(T_{(2)}\right)$ & degree & Global Arthur parameters \\\hline
 $\mathbf{1}$ & $19656$ & $5525$ & $0$ & $[1]\oplus [7]$\\$\mathbf{2}$ & $360$ & $500$ & $2$ & $\Delta_5[2]\oplus [1]\oplus [3]$\\\hline
 \end{tabular}
 \end{center}
\vskip5pt
 Note that ${}^t(0,1)=\frac{1}{67}\left(25v_1+v_2\right)$. Applying the Hecke operator $T_{\pp}$, for any prime ideal $\pp$, to both sides, it follows easily that $\lambda_1(T_{\pp})\equiv\lambda_2(T_{\pp})\pmod{67}$. This is
 $$\Nm\pp^3+(1+\Nm\pp+\Nm\pp^2+\ldots +\Nm\pp^6)\equiv a_{\pp}(f)(1+\Nm\pp)+(\Nm\pp^2+\Nm\pp^3+\Nm\pp^4)+\Nm\pp^3\pmod{67},$$
 which boils down to $(\Nm\pp+1)$ times the known Eisenstein congruence $a_{\pp}(f)\equiv 1+\Nm\pp^5\pmod{67}$, the true origin of the modulus $67$ being as a divisor of the algebraic part of the Dedekind zeta value $\zeta_E(6)$. Using the factorisation $\zeta_E(s)=\zeta(s)L(s,\chi_5)$, and using Bernoulli polynomials to compute $L(1-6,\chi_5)$, one finds $\zeta_E(6)=\frac{2^3\cdot 67\cdot\pi^{12}}{3^4\cdot 5\cdot 7}$.

 Similarly in the previous section, we could have proved congruences modulo $11$ between $\lambda_1(T_{\pp})$ and all of $\lambda_2(T_{\pp}),\lambda_3(T_{\pp}),\lambda_4(T_{\pp})$, and modulo divisors of $19$ between $\lambda_1(T_{\pp})$ and $\lambda_5(T_{\pp}),\lambda_6(T_{\pp})$. These are accounted for similarly by Eisenstein congruences in weights $4$ and $6$, with $11$ dividing $\zeta_{\Q(\sqrt{2})}(4)/\pi^8$ and $19^2$ dividing $\zeta_{\Q(\sqrt{2})}(6)/\pi^{12}$.

 To justify what we have done in this section, and what we shall do in the next, we need to take care of the following lemmas.
 \begin{lem}\label{split} For $L$ even and unimodular of rank $4n$ over $\OO_E$, where $E=\Q(\sqrt{5})$, $\SO_L$ is split at all finite places.
\end{lem}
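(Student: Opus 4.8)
The plan is to pin down the isometry class of the quadratic space $V:=L\otimes_{\OO_E}E$ from its local invariants, identifying $V_\pp$, at each finite prime $\pp$, with the hyperbolic (split) space of rank $N=4n$; recall that for an even-rank quadratic space over a field the special orthogonal group is split exactly when the space is hyperbolic.

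First I would use unimodularity to control the discriminant. If $A$ is a Gram matrix of $L$ then $A\in\GL_N(\OO_E)$, so $d(V)=\det A$ is a unit, and since $V$ is totally positive definite it is a totally positive unit. This is where $E=\Q(\sqrt5)$ is used: its fundamental unit $\tfrac12(1+\sqrt5)$ has norm $-1$, and checking the signs of $\pm\big(\tfrac12(1+\sqrt5)\big)^k$ under the two real embeddings shows that the totally positive units are precisely the squares $(\OO_E^\times)^2$. Hence $\det A\in(E^\times)^2$, so $d(V)$ is trivial and $d(V_v)=1$ at every place $v$, matching the discriminant $(-1)^{N/2}=1$ of the rank-$4n$ hyperbolic space.

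Next I would run through the Hasse invariants $\varepsilon_v(V)$. At the two real places $V$ is positive definite of rank $4n$, so $\varepsilon_{\infty_1}(V)=\varepsilon_{\infty_2}(V)=1$. At a finite prime $\pp$ of residue characteristic $\ne 2$ — this includes the ramified prime $(\sqrt5)$ — the lattice $L_\pp$ is unimodular, hence diagonalises over $\OO_\pp$ with unit diagonal entries, and all Hilbert symbols of units at a non-dyadic prime are trivial, so $\varepsilon_\pp(V)=1$. Because $2$ is inert in $\Q(\sqrt5)$, the only remaining place is the unique prime $\pp_0$ above $2$, and Hilbert reciprocity $\prod_v\varepsilon_v(V)=1$ forces $\varepsilon_{\pp_0}(V)=1$ too. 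So $V_v$ has trivial discriminant and trivial Hasse invariant at every finite place. Since a nondegenerate quadratic space over a non-archimedean local field is determined by rank, discriminant and Hasse invariant, it remains only to note that the rank-$4n$ hyperbolic space over $E_\pp$ also has trivial discriminant and Hasse invariant: its Hasse invariant is $(-1,-1)_\pp^{\,n}$, which is $1$ at non-dyadic $\pp$ since $-1$ is a unit, and is $1$ at $\pp_0$ because $\prod_v(-1,-1)_v=1$ with the two equal real factors cancelling. Therefore $V_\pp$ is hyperbolic at every finite $\pp$, i.e. $\SO_L$ is split at all finite places.

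The main obstacle is the inert dyadic prime $\pp_0$: handling it directly would require the structure theory of (even) unimodular lattices over the unramified quadratic extension of $\Q_2$, which is substantially more delicate than the non-dyadic case. The product-formula argument bypasses this, and it works precisely because $2$ is inert — so there is a single dyadic place to control — and because the totally real field $E$ has two archimedean places contributing equal, hence cancelling, Hasse invariants.
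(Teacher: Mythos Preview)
Your proof is correct and follows the same underlying strategy as the paper --- classify $V_\pp$ by rank, discriminant and Hasse invariant, and handle the single dyadic place with the Hilbert-symbol product formula --- but with one genuine difference in execution. The paper first fixes a specific representative of the genus (the direct sum of $n$ copies of the icosian ring), uses the quaternion basis $1,i,j,k$ to identify $V$ over $E$ with $\sum_{i=1}^{4n}x_i^2$, and then compares that explicit diagonal form with $\sum_{i=1}^{2n}x_i^2-\sum_{i=2n+1}^{4n}x_i^2$. You instead extract the local invariants of $V$ directly from the hypotheses: unimodularity gives $\det A\in\OO_E^\times$, total positive-definiteness plus $\Nm(\tau)=-1$ forces $\det A\in(\OO_E^\times)^2$, and at non-dyadic $\pp$ you diagonalise the unimodular lattice with unit entries rather than writing down the global form. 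Your route is a little more intrinsic --- it never needs to name a lattice in the genus and makes transparent exactly where $E=\Q(\sqrt5)$ enters (norm $-1$ fundamental unit, $2$ inert) --- while the paper's route has the virtue of making the quadratic space completely explicit over $E$. At the dyadic prime the paper also records a direct Hensel-lemma verification that $(-1,-1)_{(2)}=1$, which you replace (as the paper itself notes one may) by the product formula.
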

\begin{proof} One of the classes in the genus is represented by the direct sum (let's call it $L$) of $n$ copies of a lattice representing the single class of rank $4$ even, unimodular lattices. We can take this to be a maximal order in the totally definite quaternion algebra $D$ over $E$ unramified at all finite places (the icosian ring), with bilinear form $(\alpha,\beta)\mapsto \alpha\overline{\beta}+\overline{\alpha}\beta$, so quadratic form $\alpha\mapsto \alpha\overline{\alpha}$. Since $D$ has a basis $\{1,i,j,k\}$ over $E$ satisfying the same relations as the usual Hamilton quaternions, over $E$ the quadratic form on $L$ is equivalent to $\sum_{i=1}^{4n}x_i^2$. We just need to show that at all finite places $\p$, $\sum_{i=1}^{4n}x_i^2$ is equivalent over $E_{\p}$ to $\sum_{i=1}^{2n}x_i^2-\sum_{i=2n+1}^{4n}x_i^2$, which in turn is equivalent to the desired $\sum_{i=1}^{2n}x_ix_{2n+i}$.

Two forms over a $\p$-adic field are equivalent if and only if they have the same rank, discriminant (modulo squares) and Hasse-Witt invariant. For $\sum_{i=1}^{4n}x_i^2$ and $\sum_{i=1}^{2n}x_i^2-\sum_{i=2n+1}^{4n}x_i^2$, the rank and discriminant are obviously equal. For a diagonal form $\sum_{i=1}^Na_ix_i^2$, the Hasse-Witt invariant is a product of Hilbert symbols $\prod_{i<j}(a_i,a_j)_{\p}$ \cite[Chapter IV,\S 2]{Se}. Since $z^2-(x^2+y^2)=0$ and $z^2-(x^2-y^2)=0$ have non-trivial solutions $(1,0,1)$ and $(1,1,0)$ respectively in $E_{\p}$, $(1,1)_{\p}=(1,-1)_{\p}=1$. Hence the Hasse-Witt invariants of $\sum_{i=1}^{4n}x_i^2$ and $\sum_{i=1}^{2n}x_i^2-\sum_{i=2n+1}^{4n}x_i^2$ are $1$ and $(-1,-1)_{\p}^{({2n\atop 2})}$, respectively, so it suffices to show that $(-1,-1)_{\p}=1$, i.e. that $x^2+y^2+z^2=0$ has a non-trivial solution in every $E_{\p}$. This is easy for $\p$ dividing odd $p$, where we have solutions in $\Q_p$ (by the Chevalley-Warning theorem and Hensel's lemma). For $\p=(2)$, we can use Hensel's lemma in the variable $x$ to lift the mod $8$ solution $(2+\tau,1+\tau,1)$ (where $\tau^2=1+\tau$) to a solution in $E_{\p}$. Alternatively we can use $(-1,-1)_{\infty_1}=(-1,-1)_{\infty_2}=-1$ and the product formula for the Hilbert symbol.
\end{proof}
\begin{lem}\label{red} For $L$ even and unimodular of rank $4n$ over $\OO_E$, where $E=\Q(\sqrt{5})$, and for every finite $\p$, $\SO_L/\OO_{E,\p}$ is reductive.
\end{lem}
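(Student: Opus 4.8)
To prove Lemma~\ref{red} the plan is to work one prime at a time, since whether $\SO_L/\OO_{E,\p}$ is reductive depends only on the completed lattice $L_\p:=L\otimes_{\OO_E}\OO_{E,\p}$, which is again even and unimodular of rank $4n$. For each finite $\p$ I would show that the orthogonal group scheme $O_{L_\p}$ over $\OO_{E,\p}$ is smooth and affine with connected reductive geometric fibres; granting this, its kernel-of-the-Dickson-invariant subgroup $\SO_{L_\p}$ (defined as in \S2) is an open subgroup scheme, hence also smooth, and is by construction fibrewise connected, so it is a reductive $\OO_{E,\p}$-group scheme. Write $q$ for the quadratic form and $b=\langle\,,\rangle$ for its polar form; unimodularity says the pairing $b\colon L_\p\times L_\p\to\OO_{E,\p}$ is perfect, and evenness says $q$ takes values in $\OO_{E,\p}$. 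The generic fibre of $O_{L_\p}$ is in every case the orthogonal group of a non-degenerate quadratic space over the field $E_\p$, so it is connected reductive automatically, and the only points at issue are smoothness of $O_{L_\p}$ and the structure of its special fibre.

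For a prime $\p$ not dividing $2$ this is classical and requires no use of the even condition: here $2$ is a unit, $q=\tfrac12 b$ is itself a unimodular symmetric bilinear form, so the reduction $\overline{L_\p}=L_\p/\p L_\p$ is a non-degenerate quadratic space over the residue field $k_\p$, and $O_{L_\p}$ is smooth affine over $\OO_{E,\p}$ with special fibre the orthogonal group $\mathrm{O}_{4n}$ over $k_\p$ --- this is precisely the assertion, already invoked over $\QQ$ in \S2, that $\SO_{L_\p}(\OO_{E,\p})$ is a hyperspecial maximal compact subgroup of $\SO_{L_\p}(E_\p)$.

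The one case with genuine content is $\p=(2)$, which is inert in $E=\QQ(\sqrt5)$ with residue field $\FF_4$, and here evenness is indispensable. I would reduce modulo $\p$: then $\bar b$ is the reduction of a perfect pairing, hence non-degenerate, and being alternating of even rank $4n$ it is a symplectic form; it is the polar form of the quadratic form $\bar q$, and non-degeneracy of $\bar b$ forces $\bar q$ to be a non-defective (regular) quadratic form of rank $4n$ over $\FF_4$. From this, $O(\bar q)$ is smooth over $\FF_4$ and $\SO(\bar q)=\ker(\mathrm{Dickson})$ is connected reductive --- a split or quasi-split form of $\SO_{4n}$, according to the Arf invariant of $\bar q$. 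Appealing to the structure theory of orthogonal group schemes attached to a quadratic lattice whose polar form is perfect of even rank, one concludes that $O_{L_\p}$ is smooth affine over $\OO_{E,\p}$ with this special fibre, whence $\SO_L/\OO_{E,(2)}$ is reductive. Alternatively, Lemma~\ref{split} already gives that $L_\p\otimes E_\p$ is a split quadratic space, i.e.\ isometric to a hyperbolic space $H^{\oplus 2n}$; since an even unimodular $\OO_{E,\p}$-lattice inside a split quadratic space over a dyadic local field is isometric to the standard hyperbolic lattice, one gets $\SO_L/\OO_{E,(2)}\cong\mathbf{SO}_{2n,2n}$, the split Chevalley group scheme.

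I expect this dyadic prime to be the only real obstacle. The subtlety to keep in mind is that it is \emph{even} unimodularity --- equivalently, the non-defectiveness of $\bar q$, not merely the perfectness of the bilinear form $\bar b$ over the same $\OO_{E,\p}$ --- that rescues smoothness of the orthogonal group scheme in residue characteristic $2$; the odd primes and the generic fibre are routine, and the bookkeeping with the Dickson invariant (needed to get connected fibres at $\p=(2)$) is exactly as already set up in \S2.
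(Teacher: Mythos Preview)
Your argument is correct, but the route you take at the dyadic prime differs from the paper's. The paper works concretely: it chooses for $L$ the direct sum of $n$ copies of the icosian ring $R$, writes down an explicit Gram matrix for $R$ with respect to the $\OO_E$-basis $\{1,i,\zeta,i\zeta\}$, and then performs $(2)$-integral row and column operations until the matrix is visibly a sum of two blocks which, via O'Meara's local classification of even unimodular lattices over dyadic fields, are identified as $H(\rho)\oplus H(\rho)\simeq H(0)\oplus H(0)$. Only after this explicit verification that $L\otimes\OO_{E,(2)}$ is hyperbolic does the paper invoke Conrad's Proposition~C.3.10.

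Your approach~(a), by contrast, avoids any computation: you observe directly that even unimodularity means the polar form is perfect, so its reduction mod~$(2)$ is a non-degenerate alternating form and $\bar q$ is a regular quadratic form over $\FF_4$, and then you appeal to the general smoothness theorem for orthogonal group schemes of even-rank quadratic modules with perfect polar form. This is valid and strictly more general (nothing about $\QQ(\sqrt5)$ or rank~$4n$ is used beyond evenness of the rank), but you should pin down the reference precisely --- the relevant statement is in Conrad's Appendix~C, though not necessarily the same proposition the paper cites for the hyperbolic case. Your approach~(b) is essentially the paper's conclusion stated as a black box: the assertion that an even unimodular lattice in a split dyadic quadratic space is hyperbolic is exactly what the paper spends most of its proof verifying by hand via O'Meara, so if you go this route you owe either that computation or a citation to the classification.
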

\begin{proof}
Since $L$ is unimodular, the group scheme $\SO_L$ is reductive over $\OO_{E,\p}$ for any finite prime $\p\neq (2)$. (The special fibre is the special orthogonal group of the quadratic form associated to the reduction of the Gram matrix, which is non-singular. In characteristic $2$ we have to be more careful about the distinction between bilinear forms and quadratic forms.) The question arises whether or not $\SO_L/\OO_{E,(2)}$ is reductive.

As already remarked in the proof of Lemma \ref{split}, one of the classes in the genus is represented by the direct sum (let's call it $L$) of $n$ copies of the icosian ring $R$. Following \cite[(11.5.7)]{V}, we take $\{1,i,\zeta,i\zeta\}$ as an $\OO_E$-basis for $R$, where $\zeta:=(\tau+\tau^{-1}i+j)/2$, $\tau=(1+\sqrt{5})/2$ is the golden ratio and $i,j$ are the usual Hamilton quaternions of the same names. With respect to this basis, one easily checks that the Gram matrix of the bilinear form $(\alpha,\beta)\mapsto \alpha\overline{\beta}+\overline{\alpha}\beta$ is $\begin{pmatrix} 2 & 0 & \tau & -\tau^{-1}\\0 & 2 & \tau^{-1} & \tau\\\tau & \tau^{-1} & 2 & 0\\-\tau^{-1} & \tau & 0 & 2\end{pmatrix}$, which does have determinant $1$. Using $(2)$-integral elementary row and column operations to change the $\OO_{E,(2)}$-basis of the lattice $R\otimes\OO_{E,(2)}$, one reduces the Gram matrix to an equivalent $\begin{pmatrix} 2 & \tau & 0 & 0\\\tau & 2 & 0 & 0\\0 & 0 & 4+2\tau & -1-3\tau\\0 & 0 & -1-3\tau & 4+2\tau\end{pmatrix}$, then further to
$$\begin{pmatrix} 2 & 1 & 0 & 0\\1 & 2\tau^{-2} & 0 & 0\\0 & 0 & 2(2+\tau) & 1\\0 & 0 & 1 & 2(2+\tau)/(1+3\tau)^2\end{pmatrix}.$$ In the notation of the proof of \cite[Proposition 9]{O}, both blocks are of the form $K\simeq \begin{pmatrix} 2\epsilon & 1\\1 & 2\alpha\end{pmatrix}$. Up to squares, the determinant of the first block is $4-\tau^2=-(\tau-3)$. Since $\tau-3$ is not a square in $\OO_{E,(2)}$ and since the block is ``even'', $K\simeq H(\rho)$, in O'Meara's notation. Likewise for the second block, since the overall determinant is $1$, so $R\simeq H(\rho)\oplus H(\rho)$, which is isomorphic to $H(0)\oplus H(0)$, according to the proof of \cite[Proposition 10]{O}. In other words, with respect to some $\OO_{E,(2)}$-basis of the lattice $R\otimes\OO_{E,(2)}$, the Gram matrix is
$\begin{pmatrix} 0 & 1 & 0 & 0\\1 & 0 & 0 & 0\\0 & 0 & 0 & 1\\0 & 0 & 1 & 0\end{pmatrix}$. It follows now from \cite[Proposition C.3.10]{Con} that $\SO_L$ is reductive (in fact semi-simple) over $\OO_{E,(2)}$.
\end{proof}

\section{Even unimodular $12$-dimensional quadratic forms over $\Q(\sqrt{5})$}
Costello and Hsia \cite{CH} showed that if $E=\Q(\sqrt{5})$ then the genus of even unimodular $\OO_E$-lattices contains $15$ classes if $N=12$. We have simultaneously diagonalised the neighbour operators $T_{(2)}$ and $T_{(\sqrt{5})}$, with the eigenvalues recorded in the table below. We have also produced guesses for the global Arthur parameters that recover these computed Hecke eigenvalues (and the correct $c_{\infty_1}(z), c_{\infty_2}(z)$), with the exception of three cases. To illustrate this, consider $\mathbf{i=10}$. Using Magma, the space $S_{[10,6]}(\SL_2(\OO_E))$ (i.e. $f\left(\frac{az+b}{cz+d}\right)=(c_1z_1+d_1)^{10}(c_2z_2+d_2)^6f(z)$, non-parallel weight) is one-dimensional. The associated automorphic representation $\Delta_{(9,5)}$ of $\GL_2(\A_E)$ has $$c_{\infty_1}(\Delta_{(9,5)})(z)=\diag((z/\overline{z})^{9/2},(z/\overline{z})^{-9/2}),$$  $$c_{\infty_2}(\Delta_{(9,5)})(z)=\diag((z/\overline{z})^{5/2},(z/\overline{z})^{-5/2}).$$ Exchanging embeddings, we have $\Delta_{(5,9)}$ with $$c_{\infty_1}(\Delta_{(5,9)})(z)=\diag((z/\overline{z})^{5/2},(z/\overline{z})^{-5/2}),$$
$$c_{\infty_2}(\Delta_{(5,9)})(z)=\diag((z/\overline{z})^{9/2},(z/\overline{z})^{-9/2}).$$
Now if $\pi=\Delta_{(9,5)}[2]\oplus\Delta_{(5,9)}[2]\oplus[3]\oplus[1]$ then
$$c_{\infty_1}(\pi)=\diag((z/\overline{z})^{9/2},(z/\overline{z})^{-9/2})\otimes\diag((z/\overline{z})^{1/2},(z/\overline{z})^{-1/2})$$
$$\oplus\diag((z/\overline{z})^{5/2},(z/\overline{z})^{-5/2})\otimes\diag((z/\overline{z})^{1/2},(z/\overline{z})^{-1/2})\oplus\diag((z/\overline{z})^1,(z/\overline{z})^{-1},1,1),$$ which is conjugate (in $\GL_{12}(\C)$) to the correct $$\diag((z/\overline{z})^5,\ldots,(z/\overline{z})^1,1,(z/\overline{z})^{-5},\ldots,(z/\overline{z})^{-1},1).$$ Similarly $c_{\infty_2}(\pi)$ is correct. Here is a small table of Hecke eigenvalues of the Hilbert modular forms used in this section.
\vskip10pt
\begin{center}
\begin{tabular}{|c|c|c|}\hline & $T_{(2)}$ & $T_{(\sqrt{5})}$\\\hline $\Delta_5$ & $20$ & $-90$\\$\Delta_7$ & $140$ & $150$\\$\Delta_9^{(2)}$ & $170\mp 30\sqrt{809}$ & $570\pm 60\sqrt{809}$\\$\Delta_{(9,5)},\Delta_{(5,9)}$ & $320$ & $1950$\\$\Delta_{(7,3)},\Delta_{(3,7)}$ & $-160$ & $150$\\\hline
\end{tabular}
\end{center}
\vskip10pt
Note that in general, $\Delta_{(9,5)}$ and $\Delta_{(5,9)}$ do not have the same Hecke eigenvalues, rather they are conjugate in $\Q(\sqrt{5})$.

For $\mathbf{i=10}$ and $\pp=(2)$, if $4^{9/2}(\beta+\beta^{-1})=320$, we check
$$4^5\tr(\beta\cdot 4^{1/2},\beta\cdot 4^{-1/2},\beta^{-1}\cdot 4^{1/2},\beta^{-1}\cdot 4^{-1/2},\beta\cdot 4^{1/2},\beta\cdot 4^{-1/2},\beta^{-1}\cdot 4^{1/2},\beta^{-1}\cdot 4^{-1/2},4,1,4^{-1},1)$$
$$=2(320)(1+4)+4^4(1+4+4^2)+4^5=9600,$$ so $\Delta_{(9,5)}[2]\oplus\Delta_{(5,9)}[2]\oplus[1]\oplus[3]$ would produce the same $\lambda_{10}(T_{(2)})$ as what was computed using neighbours.
\vskip5pt
\begin{center}
\begin{tabular}{|c|c|c|c|c|}\hline$\mathbf{i}$ & $\lambda_i\left(T_{(2)}\right)$ & $\lambda_i\left(T_{(\sqrt{5})}\right)$ & $g_i$ & Global Arthur parameters (conj'l)\\\hline
 $\mathbf{1}$ & $1399125$ & $12210156$ & $0$ & $[1]\oplus[11]$\\$\mathbf{2}$ & $348900$ & $2446380$ & $1$ & $\Sym^2\Delta_5\oplus[9]$\\$\mathbf{3}$ & $89250+150\sqrt{809}$ & $494820-360\sqrt{809}$ & $2$ & $\Delta_9^{(2)}[2]\oplus[1]\oplus[7]$\\$\mathbf{4}$ & $89250-150\sqrt{809}$ & $494820+360\sqrt{809}$ & $2$ & "\\$\mathbf{5}$ & $27300$ & $-351540$ & $6$ & $\Delta_5[6]$\\$\mathbf{6}$ & $24000$ & $107100$ & $3$ & $\Sym^2\Delta_5\oplus\Delta_7[2]\oplus[5]$\\$\mathbf{7}$ & $21300$ & $90900$ & $3$ & ?\\$\mathbf{8}$ & $18300$ & $45900$ & $4$ & $\Delta_7[4]\oplus[1]\oplus[3]$\\$\mathbf{9}$ & $10800$ & $27900$ & $4$ & ?\\$\mathbf{10}$ & $9600$ & $45900$ & $4$ & $\Delta_{(9,5)}[2]\oplus\Delta_{(5,9)}[2]\oplus[1]\oplus[3]$\\$\mathbf{11}$ & $8850+150\sqrt{809}$ & $12420-360\sqrt{809}$ & $4$ & $\Delta_{9}^{(2)}[2]\oplus \Delta_5[2]\oplus[1]\oplus[3]$ \\$\mathbf{12}$ & $8850-150\sqrt{809}$ & $12420+360\sqrt{809}$ & $4$ & "\\$\mathbf{13}$ & $7200$ & $-62100$ & $5$ & $\Sym^2\Delta_5\oplus\Delta_5[4]\oplus[1]$ \\$\mathbf{14}$ & $-6000$ & $17100$ & $\leq 5$ & $\Sym^2\Delta_5\oplus\Delta_{(7,3)}[2]\oplus\Delta_{(3,7)}[2]\oplus[1]$\\$\mathbf{15}$ & $900$ & $-13500$ & $\leq 5$ & ?\\\hline
 \end{tabular}
 \end{center}
\vskip10pt
 Unlike the situation in the previous two sections, it is not possible to prove all the guesses for global Arthur parameters using theta series. But we can do most of them, all but $i=\mathbf{10,14}$.
 \begin{prop}\label{rt5N12}
 The guesses for global Arthur parameters are correct in the cases $i=\mathbf{1,2,3,4,5,6,8,11,12,13}$. In these cases, also the degrees are as in the table.
 \end{prop}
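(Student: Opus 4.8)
The plan is, for each $i$ on the list, to write down an explicit Hilbert--Siegel Hecke eigenform $F_i$ of parallel weight $(N/2,N/2)=(6,6)$ on $\Sp_m(\OO_E)$, with $m$ the conjectured degree $g_i$, to show $F_i$ lies in the image of $\Theta^{(m)}$, to identify from the computed eigenvalues $\lambda_i(T_{(2)}),\lambda_i(T_{(\sqrt5)})$ which eigenvector $v_i$ it is the image of, and then to read off the global Arthur parameter of $\pi_i$ from the known standard $L$-function of $F_i$ via Rallis's theorem (Proposition \ref{eigentheta}(2)). That $\SO_L$ is split over every $E_\pp$ and reductive over every $\OO_{E,\pp}$ — so that all of \S4 applies — is Lemmas \ref{split} and \ref{red} with $n=3$. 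The candidate forms are: for $i=\mathbf 1$, the Eisenstein series $\Theta^{(m)}(v_1)$, $v_1={}^t(1,\dots,1)$, of degree $0$ by convention; for $i=\mathbf 2$, the newform $f_6$ spanning $S_6(\SL_2(\OO_E))$; for $i=\mathbf 3,\mathbf 4$, the Saito--Kurokawa (degree-$2$ Ikeda) lifts $I^{(2)}(f_{10}^{(1)}),I^{(2)}(f_{10}^{(2)})$ of the Galois-conjugate pair in $S_{10}(\SL_2(\OO_E))$; for $i=\mathbf 5$, the full Ikeda lift $I^{(6)}(f_6)$; for $i=\mathbf 8$, the Ikeda lift $I^{(4)}(f_8)$ of the newform $f_8$ in $S_8(\SL_2(\OO_E))$; for $i=\mathbf 6$, the Miyawaki lift $\FFF_{f_8,f_6}$ (classical type, $g=4$, $r=1$); for $i=\mathbf{11},\mathbf{12}$, the Miyawaki lifts $\FFF_{f_6,I^{(2)}(f_{10}^{(j)})}$; and for $i=\mathbf{13}$, the Miyawaki lift $\FFF_{f_6,f_6}$. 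In each case the standard $L$-function of $F_i$ is the product of $\zeta_E$, Hilbert modular $L$-functions $L(f,\cdot)$ and symmetric-square $L$-functions given by the Hilbert analogue (Ikeda--Yamana, Atobe) of the formulas in Proposition \ref{IkMiy}.

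For $i\in\{\mathbf 2,\mathbf 3,\mathbf 4,\mathbf 6,\mathbf 8,\mathbf{11},\mathbf{12}\}$ one has $m=g_i\le 4<(N/2)-1$, so Kuang's analogue of Böcherer's theorem (Proposition \ref{eigentheta3}) applies as soon as $L(\st,F_i,(N/2)-m)\ne 0$. Here $(N/2)-m\in\{2,3,4,5\}$, and on inspecting the $L$-function formulas, every factor $\zeta_E(s)$, $L(f,s)$, $L(\Sym^2 f,s)$ that occurs is evaluated, in the analytic normalisation, at a point with $\Re s>1$; each such value is therefore a nonzero Euler product, so the non-vanishing is automatic. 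Hence $F_i=\Theta^{(m)}(v)$ for some $v$, which by Proposition \ref{eigentheta}(1) is a Hecke eigenvector, and the eigenvalues of $T_{(2)},T_{(\sqrt5)}$ predicted from the Satake parameters of $F_i$ via (\ref{Gross}) agree, by the table, with those of exactly one $v_i$, so $v=v_i$. Finally Proposition \ref{eigentheta}(2) with $N=12$ gives $\{\beta_{1,p}^{\pm1},\dots,\beta_{6,p}^{\pm1}\}=\{\alpha_{1,p}^{\pm1},\dots,\alpha_{m,p}^{\pm1}\}\cup\{p^{\pm((N/2)-m-1)},\dots,p^{\pm1},1,1\}$ as multisets, and substituting the $\alpha$'s read off from $F_i$'s standard $L$-function produces exactly the parameter in the table. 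Case $i=\mathbf 1$ is immediate: $\Theta^{(m)}(v_1)$ is Eisenstein for $1\le m<N/2$ by Siegel--Weil, its degree is $0$, and Proposition \ref{eigentheta}(2) gives $[1]\oplus[11]$. Case $i=\mathbf 5$ is handled directly by Proposition \ref{ilifttheta}: since $f_6\in S_{N/2}(\SL_2(\OO_E))$, part (1) yields a $\pi_i$ with parameter $\Delta_5[6]$, and the Hecke eigenvalues force $i=5$.

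For the degrees, once $A_i$ is known the upper bound $g_i\le m$ is just $\Theta^{(m)}(v_i)=F_i\ne 0$: for the Ikeda lifts this is part of their construction in \cite{IY}, for the Miyawaki lifts see the last paragraph, and for $i=\mathbf 5$ it needs $L(f_6,N/4)=L(f_6,3)\ne 0$, a central value that we verify numerically in Magma, whereupon Proposition \ref{ilifttheta}(2) gives $\Theta^{(6)}(v_5)=I^{(6)}(f_6)\ne 0$. The matching lower bound $g_i\ge m$ follows by running Proposition \ref{eigentheta}(2) backwards: if $\Theta^{(m')}(v_i)\ne 0$ for some $m'<m$, the Satake multiset of $v_i$ — now pinned down by $A_i$ — would have to contain $\{p^{\pm((N/2)-m'-1)},\dots,p^{\pm1},1,1\}$. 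But the pure powers $p^{\pm j}$ with $j\ge 1$ occurring in any $A_i$ on our list come only from summands $[d]$ with $d$ odd and form a short string, the summands $\Delta_a[d]$ (all with $d$ even here) contribute none, and by the Ramanujan bound for $f_6,f_8,f_{10}^{(j)}$ no other Satake entry is a pure power of $p$; together with the absence of any entry equal to $1$ in $\Delta_5[6]$ for $i=\mathbf 5$, this rules out $m'<g_i$. So the degree is settled in every case except $i=\mathbf{13}$.

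The case $i=\mathbf{13}$ is the main obstacle, because there $m=g_{13}=5=(N/2)-1$, so Proposition \ref{eigentheta3} does not apply and one cannot put $\FFF_{f_6,f_6}\in S_6(\Sp_5(\OO_E))$ into the image of $\Theta^{(5)}$ by Böcherer's method. Instead the plan is to use the triple-product identity of Proposition \ref{nonzero} with $k=5$, $i=13$, $j=2$ (so $g_{13}+g_2=6=g_5$):
$$\langle\Theta^{(6)}(v_5)|_{\HH_5\times\HH_1},\Theta^{(5)}(v_{13})\times f_6\rangle=\mathrm{const}\cdot(v_5,v_{13}\circ v_2),$$
where $\Theta^{(6)}(v_5)=I^{(6)}(f_6)$ by case $i=\mathbf 5$; by the integral definition of the Hilbert--Siegel Miyawaki lift (Atobe) the left-hand side is a nonzero multiple of $\langle\Theta^{(5)}(v_{13}),\FFF_{f_6,f_6}\rangle$. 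One computes $(v_5,v_{13}\circ v_2)=\sum_t|\Aut(L_t)|^{-1}c_{5,t}c_{13,t}c_{2,t}$ directly from the known eigenvectors and checks it is nonzero; this at once shows $\FFF_{f_6,f_6}\ne 0$, shows $\Theta^{(5)}(v_{13})$ is not orthogonal to the Hecke eigenform $\FFF_{f_6,f_6}$ hence (one-dimensional eigenspaces) equal to it up to scalar, and — on reading $L(\st,\FFF_{f_6,f_6},s)=L(\Sym^2 f_6,s)\prod_{i=1}^{4}L(f_6,s+5-i)$ and applying Proposition \ref{eigentheta}(2) with $N=12$, $m=5$ — yields the parameter $\Sym^2\Delta_5\oplus\Delta_5[4]\oplus[1]$, together with $g_{13}=5$ as above. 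The delicate inputs are thus the non-vanishing of the Miyawaki lifts used for $i=\mathbf 6,\mathbf{11},\mathbf{12}$ (for which one appeals to Atobe's explicit Fourier-coefficient formulas, the case $i=\mathbf 6$ being the Hilbert analogue of Ikeda's resolution of Miyawaki's original conjecture, and, if a clean citation is lacking, to further triple-product computations as above), and, for $i=\mathbf{13}$, the explicit verification that the triple product $(v_5,v_{13}\circ v_2)$ of computed eigenvectors does not vanish.
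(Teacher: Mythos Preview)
Your overall architecture matches the paper's, but there are two genuine gaps and one point where the paper's ordering of the argument is doing real work that your version leaves hanging.

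\textbf{Case $i=\mathbf 8$: the central $L$-value.} Your blanket claim that ``every factor $\zeta_E(s)$, $L(f,s)$, $L(\Sym^2 f,s)$ that occurs is evaluated, in the analytic normalisation, at a point with $\Re s>1$'' is false here. With $m=4$ and $(N/2)-m=2$ one has $L(\st,I^{(4)}(f_8),2)=\zeta_E(2)L(f_8,7)L(f_8,6)L(f_8,5)L(f_8,4)$, and for $f_8\in S_8$ the point $s=4$ is the centre of the functional equation (analytic $s=1/2$). This value is \emph{not} automatically nonzero; the paper has to check numerically that $L(f_8,4)\approx 1.606\ne 0$. Without this, B\"ocherer--Kuang does not apply and the case collapses.

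\textbf{Case $i=\mathbf{13}$: circularity in the triple-product step.} To apply Proposition~\ref{nonzero} with $(k,i,j)=(5,13,2)$ you must already know $g_{13}$, since $F_{13}:=\Theta^{(g_{13})}(v_{13})$ enters the formula. Your parenthetical ``(so $g_{13}+g_2=6=g_5$)'' assumes $g_{13}=5$, but you have only established $g_{13}\ge 5$ (via Corollary~\ref{degbound} from $(v_5,v_{13}\circ v_2)\ne 0$). The paper closes this by a second triple product, $(v_{13},v_6\circ v_3)\ne 0$, giving $g_{13}\le g_6+g_3=3+2=5$, hence $g_{13}=5$; only then can Proposition~\ref{nonzero} be invoked. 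Separately, your claim that $\Theta^{(5)}(v_{13})$ and $\FFF_{f_6,f_6}$ are proportional because of ``one-dimensional eigenspaces'' in $S_6(\Sp_5(\OO_E))$ is not justified; the paper needs the Gan--Qiu--Takeda extension of the Rallis inner product (plus Moeglin's theorem and the archimedean input of Z.~Liu/Shimura) to prove that stronger statement. For the proposition as stated you only need that they share Hecke eigenvalues, which the triple-product argument already gives once $g_{13}=5$ is in hand.

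\textbf{Cases $i=\mathbf 6,\mathbf{11},\mathbf{12}$: order of operations.} Your ``Plan A'' of citing Atobe for nonvanishing of the Hilbert--Siegel Miyawaki lifts and then applying Proposition~\ref{eigentheta3} directly is not backed by a clean result in the literature for these specific lifts. The paper reverses the logic: it first uses two triple products (e.g.\ $(v_8,v_2\circ v_6)\ne 0$ and $(v_6,v_3\circ v_2)\ne 0$) via Corollary~\ref{degbound} to pin down $g_6=3$, and then Proposition~\ref{nonzero} simultaneously shows $\FFF_{I^{(4)}(f_8),f_6}\ne 0$ and identifies its Hecke eigenspace with that of $\Theta^{(3)}(v_6)$. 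So the nonvanishing of the Miyawaki lift is a \emph{consequence}, not an input. Your fallback ``further triple-product computations as above'' is exactly this, and is what you should lead with.

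Your degree lower-bound argument via Ramanujan and Proposition~\ref{eigentheta}(2) is a legitimate alternative to the paper's purely triple-product bounds, and works for the cases where $A_i$ is already known; but note it cannot be used for $i=\mathbf{13}$ before $A_{13}$ is established, so it does not help with the circularity above.
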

 \begin{proof}

 $\mathbf{i=1}$. This is proved just as in the previous sections.

 $\mathbf{i=2}$. The space $S_6(\SL_2(\OO_E))$ is spanned by a single form $f$, met already in the previous section. Since $(N/2)=6>1+1=m+1$, and $L(\st,f,5)=L(\Sym^2 f,10)\neq 0$, Proposition \ref{eigentheta3} tells us that $f$ belongs to the image of $\Theta^{(1)}$. The Satake parameter at $(\sqrt{5})$ for the automorphic representation of $\Sp_1(\A)$ associated with $f$ is $(\alpha^2,1,\alpha^{-2})$. If $\Theta^{(1)}(v_i)=f$ (up to scalar multiples) then $t_{(\sqrt{5})}(\pi_i)=(\diag(5^4, 5^3,\ldots,1,\alpha^2,5^{-4},5^{-3},\ldots,1,\alpha^{-2}))$, by Proposition \ref{eigentheta}(2). Now
 $$5^{(12/2)-1}\tr(\diag(5^4, 5^3,\ldots,1,\alpha^2,5^{-4},5^{-3},\ldots,1,\alpha^{-2}))$$ $$=5\frac{5^9-1}{5-1}+(5^{5/2}(\alpha+\alpha^{-1}))^2-5^5 =5\frac{5^9-1}{5-1}+(-90)^2-5^5=2446380,$$ so we must have $\Theta^{(1)}(v_2)=f$. Proposition \ref{eigentheta}(2) now shows that for every $\pp$ we have
 $$t_{\pp}(\pi_2)=(\diag(\Nm\pp^4, \Nm\pp^3,\ldots,1,\alpha_{\pp}^2,\Nm\pp^{-4},\Nm\pp^{-3},\ldots,1,\alpha_{\pp}^{-2})),$$ where $a_{\pp}(f)=(\Nm\pp)^{5/2}(\alpha_{\pp}+\alpha_{\pp}^{-1})$. Thus every local Langlands parameter of $\pi_2$ at a finite prime matches that attached to the global Arthur parameter
 $\Sym^2\Delta_5\oplus[9]$. At $\infty_1$ and $\infty_2$, $c_{\infty_j}(\Sym^2\Delta_5):z\mapsto\diag((z/\overline{z})^5,1,(z/\overline{z})^{-5})$ (for $j=1,2$), and $c_{\infty_j}([9]):z\mapsto\diag((z/\overline{z})^4,\ldots,(z/\overline{z})^1,1,(z/\overline{z})^{-4},\ldots,(z/\overline{z})^{-1})$. The concatenation matches the standard
 $c_{\infty_j}(\pi_i)(z)$. Also the other element $j$ generating $W_{\R}$ with $\C^{\times}$ (with $jzj^{-1}=\overline{z}$) acts to exchange powers of $z/\overline{z}$ with opposite exponents, for both $c_{\infty_j}(\pi_i)$ and $c_{\infty_j}(\Sym^2\Delta_5)$,$c_{\infty_j}([9])$. So all the local Langlands parameters match, and the global Arthur parameter of $\pi_2$ is as stated. For the other cases we shall not give such full details of the logic.

 $\mathbf{i=3,4}$. The space $S_{10}(\SL_2(\OO_E))$ is spanned by Galois conjugate forms $f_1,f_2$, with associated cuspidal automorphic representations of $\GL_2(\A_E)$ both denoted $\Delta_9^{(2)}$. Both $I^{(2)}(f_1)$ and $I^{(2)}(f_2)\in S_6(\Sp_2(\OO_E))$ are in the image of $\Theta^{(2)}$, by Proposition \ref{eigentheta3}, since $6>2+1$ and $L(\st,I^{(2)}(f_j),4)=\zeta(4)L(f_j,9)L(f_j,8)\neq 0$. By Proposition \ref{eigentheta}(2) (and Proposition \ref{IkMiy}(1)), the corresponding $\pi_i$ have the correct $t_{\pp}$ for $\Delta_9^{(2)}[2]\oplus[1]\oplus[7]$, which also produces the correct $c_{\infty_1},c_{\infty_2}$. Checking Hecke eigenvalues, $i$ must be $\mathbf{3}$ and $\mathbf{4}$.

 $\mathbf{i=8}$. The space $S_8(\SL_2(\OO_E))$ is spanned by a single form $g$, with associated $\Delta_7$. By Proposition \ref{eigentheta3}, $I^{(4)}(g)=\Theta^{(4)}(v_i)$ for some $i$, since $6>4+1$ and $L(\st,I^{(4)}(g),2)=\zeta(2)L(g,7)L(g,6)L(g,5)L(g,4)\neq 0$. Note that although $L(g,4)$ is a central value, the sign in the functional equation is $+1$, and in fact $L(g,4)\neq 0$. (Magma produced, after about $2$ minutes, an approximation to $29$ decimal places, beginning $1.606277885$, sufficient to prove non-vanishing.)  As before, $\pi_i$ must have global Arthur parameter $\Delta_7[4]\oplus[1]\oplus[3]$, and checking against the computed Hecke eigenvalues, $i$ must be $\mathbf{8}$.

 $\mathbf{i=5}$. By Proposition \ref{ilifttheta}(1), there is some $\pi_i$ with global Arthur parameter $\Delta_5[6]$, and it can only be $i=\mathbf{5}$, since $-90\frac{5^6-1}{5-1}=-351540$. We may also check that $L(f,3)\approx 0.854944\neq 0$, so $\Theta^{(6)}(v_5)=I^{(6)}(f)$, by Proposition \ref{ilifttheta}(2).

 $\mathbf{i=6}$. We have seen already that $\Theta^{(4)}(v_8)=I^{(4)}(g)$ (with $g\in S_8(\SL_2(\OO_E))$), and $\Theta^{(1)}(v_2)=f\in S_6(\SL_2(\OO_E))$, in particular $g_8=4$ and $g_2=1$. We find that
 $(v_8,v_2\circ v_6)\neq 0$, so by Proposition \ref{nonzero} $\Theta^{(1+g_6)}(v_8)\neq 0$, so $1+g_6\geq g_8=4$, i.e. $g_6\geq 3$. But also $(v_6,v_3\circ v_2)\neq 0$, which implies that $g_6\leq g_3+g_2=2+1=3$. Hence $g_6=3$. Knowing this, Proposition \ref{nonzero} now tells us that $\langle \Theta^{(4)}(v_8)|_{\HH_1\times\HH_3},\Theta^{(1)}(v_2)\times\Theta^{(3)}(v_6)\rangle\neq 0$, i.e. $\langle I^{(4)}(g)|_{\HH_1\times\HH_3},f\times\Theta^{(3)}(v_6)\rangle\neq 0$.
 By Proposition \ref{IkMiy}(2) then $\langle \mathcal{F}_{I^{(4)}(g),f},\Theta^{(3)}(v_6)\rangle\neq 0$, so the Hecke eigenforms $\mathcal{F}_{I^{(4)}(g),f}$ and $\Theta^{(3)}(v_6)$ must have the same Hecke eigenvalues and standard $L$-function. Using $L(\st,\mathcal{F}_{I^{(4)}(g),f},s)=L(\st,f,s)L(g,s+4)L(g,s+3)$, the global Arthur parameter of the cuspidal automorphic representation of $\Sp_3(\OO_E)$ associated to $\Theta^{(3)}(v_6)$ is $\Sym^2\Delta_5\oplus\Delta_7[2]$, then using Proposition \ref{eigentheta}(2) the global Arthur parameter of $\pi_6$ is $\Sym^2\Delta_5\oplus\Delta_7[2]\oplus[5]$ (where again one checks easily that $c_{\infty_1}$ and $c_{\infty_2}$ are right).

 We may actually say something a bit stronger about the relation between $\mathcal{F}_{I^{(4)}(g),f}$ and $\Theta^{(3)}(v_6)$, now we know that $\mathcal{F}_{I^{(4)}(g),f}\neq 0$.
 Since $N/2=6>3+1=m+1$, and since $L(\st,\mathcal{F}_{I^{(4)}(g),f},(N/2)-m)=L(\st,f,3)L(g,7)L(g,6)\neq 0$, Proposition \ref{eigentheta3} tells us that $\mathcal{F}_{I^{(4)}(g),f}$ is in the image of $\Theta^{(3)}$, and (up to scalar multiple) it can only be $\Theta^{(3)}(v_6)$.

 $\mathbf{i=11,12}$. This time use $(v_5,v_3\circ v_{11})\neq 0$ and $(v_{11},v_6\circ v_2)\neq 0$ to show that $g_{11}=4$ and $\Theta^{(4)}(v_{11})$ has the same Hecke eigenvalues as $\mathcal{F}_{I^{(6)}(f),I^{(2)}f_1}$. Then since $N/2=6>4+1=m+1$ and $$L(\st,\mathcal{F}_{I^{(6)}(f),I^{(2)}f_1},(N/2)-m)=L(\st,I^{(2)}f_1,2)L(f,5)L(f,4)$$ $$=\zeta(2)L(f_1,7)L(f_1,6)L(f,5)L(f,4)\neq 0,$$ $\Theta^{(4)}(v_{11})$ and $\mathcal{F}_{I^{(6)}(f),I^{(2)}f_1}$ are actually the same up to scalar multiples. Similarly for $\mathbf{i=12}$.

  $\mathbf{i=13}$. We argue as in the previous case, using $(v_5,v_2\circ v_{13})\neq 0$ and $(v_{13},v_6\circ v_3)\neq 0$ to prove that $g_{13}=5$ and $\Theta^{(5)}(v_{13})$ is in the same Hecke eigenspace as $\mathcal{F}_{I^{(6)}(f),f}$. This proves the guess for the global Arthur parameter and shows that $\mathcal{F}_{I^{(6)}(f),f}\neq 0$. To show that $\mathcal{F}_{I^{(6)}(f),f}$ and $\Theta^{(5)}(v_{13})$ are equal up to scalar multiple, we proceed as follows, thanks to advice from Yamana. Since $N/2=m+1=6$, Proposition \ref{eigentheta3} does not apply. In other words, we are outside the ``convergent range'' for the Siegel-Weil formula. However, a theorem of Gan, Qiu and Takeda, extending Rallis's inner product formula \cite{GQT}[Theorem 11.3] applies. In their notation, $r=0, \epsilon_0=1, m=12, n=5, d(n)=6$, and the $L$-value in their condition (b) is $L(\st,\mathcal{F}_{I^{(6)}(f),f},(N/2)-m)=L(\st,f,1)L(f,5)L(f,4)L(f,3)L(f,2)$, which is non-zero as required. Regarding the condition (a), the required non-vanishing of the local zeta integrals at infinite places is pointed out by Z. Liu in \cite{LiuZ}[\S 4.3], who attributes the computation to Shimura \cite{Sh}. Hence the theta lift to $O_{12}(\A_E)$ of (the automorphic representation associated to) $\mathcal{F}_{I^{(6)}(f),f}$ is non-zero. By a theorem of Moeglin \cite{Moe}, the theta lift of this to $\Sp_5(\A_E)$ is back where we started. It follows that $\mathcal{F}_{I^{(6)}(f),f}$ is in the image of $\Theta^{(5)}$, and (up to scalar multiple) it can only be $\Theta^{(5)}(v_{13})$.

 \end{proof}
 \begin{prop}
 The rest of the degrees in the table are correct.
 \end{prop}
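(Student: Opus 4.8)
The plan is to obtain every remaining degree from Corollary \ref{degbound} alone, used in two ways. Recall that $(v_k,v_i\circ v_j)\neq 0$ forces $g_k\le g_i+g_j$; applying the same corollary with a different distinguished index, $(v_a,v_k\circ v_c)\neq 0$ forces $g_a\le g_k+g_c$, i.e. $g_k\ge g_a-g_c$. Since $v_a=\sum_t c_{at}e_t$ makes $(v_a,v_b\circ v_c)=\sum_t\frac{1}{|\Aut(L_t)|}c_{at}c_{bt}c_{ct}$ an explicit finite sum in the coordinates of the eigenvectors already computed, testing any particular such product for non-vanishing is routine. So the proof will reduce to locating, for each of $k=\mathbf{7},\mathbf{9},\mathbf{10},\mathbf{14},\mathbf{15}$, auxiliary indices realising the bound claimed in the table, always drawing the auxiliary degrees from the list $g_1=0$, $g_2=1$, $g_3=g_4=2$, $g_6=3$, $g_8=g_{11}=g_{12}=4$, $g_{13}=5$, $g_5=6$ established in Proposition \ref{rt5N12}.

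Concretely, for $k=\mathbf{7}$ I would check $(v_7,v_2\circ v_3)\neq 0$, giving $g_7\le g_2+g_3=3$, together with $(v_5,v_7\circ v_6)\neq 0$ (or, alternatively, $(v_{13},v_7\circ v_3)\neq 0$), giving $6=g_5\le g_7+g_6=g_7+3$, hence $g_7\ge 3$; so $g_7=3$. For $k=\mathbf{9}$ and $k=\mathbf{10}$ I would use a product such as $(v_k,v_3\circ v_4)\neq 0$ for the upper bound $g_k\le 4$, and $(v_5,v_k\circ v_3)\neq 0$ (equivalently $(v_{13},v_k\circ v_2)\neq 0$) for the lower bound $g_k\ge g_5-g_3=4$; so $g_9=g_{10}=4$. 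For $k=\mathbf{14}$ and $k=\mathbf{15}$, where the table only asserts $g_k\le 5$, it suffices to produce one product $(v_k,v_i\circ v_j)\neq 0$ with $g_i+g_j\le 5$, for instance via $v_2\circ v_8$ (degrees $1+4$) or $v_3\circ v_6$ (degrees $2+3$).

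This method is not expected to pin down $g_{14}$ and $g_{15}$ exactly: a lower bound $g_k\ge 5$ would require $(v_a,v_k\circ v_c)\neq 0$ with $g_a-g_c=5$, and the only options are $(g_a,g_c)=(6,1)$, i.e. $(v_5,v_k\circ v_2)\neq 0$, and $(g_a,g_c)=(5,0)$, i.e. $(v_{13},v_k\circ v_1)\neq 0$; but $v_1$ is the identity for $\circ$, so the latter equals $(v_{13},v_k)=0$ by orthogonality of distinct Hecke eigenvectors, while the former product will be found to vanish.

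The arguments are all of this single elementary type, so the only real issue is bookkeeping: in each instance one must be sure that the auxiliary degrees $g_i,g_j$ invoked have genuinely been established — which is exactly why everything above is arranged to call only on the ten cases settled in Proposition \ref{rt5N12} — and then simply verify that the chosen triple product is non-zero for the explicit eigenvectors at hand. That verification, rather than any conceptual point, is the main (and essentially only) obstacle.
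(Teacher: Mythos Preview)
Your approach is essentially the same as the paper's: both use Corollary \ref{degbound} in the two directions you describe, and the paper's choices of triple products largely coincide with yours (it uses $(v_7,v_2\circ v_3)$ and $(v_{13},v_7\circ v_3)$ for $\mathbf{7}$; $(v_k,v_6\circ v_2)$ and $(v_5,v_k\circ v_3)$ for $\mathbf{9},\mathbf{10}$; $(v_{14},v_8\circ v_2)$ for $\mathbf{14}$; and $(v_{15},v_9\circ v_2)$ for $\mathbf{15}$, the last one invoking $g_9=4$ just established rather than restricting to Proposition \ref{rt5N12}). The only caveat is that some of your alternative triple products, such as $(v_9,v_3\circ v_4)$, are not the ones the paper actually verified, so you would need to check those specific non-vanishings yourself.
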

 \begin{proof}

 $\mathbf{i=7}$. $(v_7,v_2\circ v_3)\neq 0\implies g_7\leq g_2+g_3=1+2=3$. But $(v_{13},v_7\circ v_3)\neq 0\implies g_7\geq g_{13}-g_3=5-2=3$, hence $g_7=3$.

 $\mathbf{i=9,10}$. $(v_{9},v_6\circ v_2)\neq 0\implies g_9\leq 4$, while $(v_5,v_9\circ v_3)\neq 0\implies g_9\geq 4$, so $g_9=4$. Similarly, non-vanishing of $(v_{10},v_6\circ v_2)$ and $(v_5,v_{10}\circ v_3)$ implies that $g_{10}=4$.

  $\mathbf{i=14,15}$. $(v_{14},v_8\circ v_2)\neq 0\implies g_{14}\leq 5$ and $(v_{15},v_{9}\circ v_2)\neq 0$ implies that $g_{15}\leq 5$.
 \end{proof}

An alternative approach to proving the global Arthur parameters for $\mathbf{i=10,14}$ (or any of the others), would be to use Arthur's multiplicity formula for symplectic groups over $E$, to prove the existence of Hecke eigenforms in $S_6(\Sp_4(\OO_E))$ and $S_6(\Sp_5(\OO_E))$ whose associated automorphic representations have global Arthur parameters $\Delta_{(9,5)}[2]\oplus\Delta_{(5,9)}[2]\oplus[1]$ and $\Sym^2\Delta_5\oplus\Delta_{(7,3)}[2]\oplus\Delta_{(3,7)}[2]$, respectively, then to proceed as in the proof of Proposition \ref{rt5N12}, to show that each is in the image of the appropriate theta map. This would be the analogue of the proof in \cite[9.2.11]{CL} for the Niemeier lattices. We do not pursue this here, because we are as yet unable to prevent this method showing that the parameter $\psi=\Delta_{(7,3)}\otimes \Delta_{(3,7)}\oplus \Delta_7[2]\oplus [3]\oplus [1]$ also occurs. This is impossible, since the eigenvalue of $T_{(2)}$ would be $(-160)^2+4(140)(1+4)+4^4(1+4+4^2)+4^5=34800$, which does not match anything in the table. Here $\Delta_{(7,3)}\otimes \Delta_{(3,7)}$ comes from a representation of $\SO_{2,2}(\A_E)$ arising via tensor-product functoriality, as explained in \cite[4.14]{ChR}.

  \subsection{Congruences mod $29$ and mod $11$}
 As in the previous section, we may easily prove the following congruences, for any prime ideal $\pp$:
 \begin{enumerate}
 \item $\lambda_1(T_\pp)\equiv\lambda_2(T_\pp)\equiv\lambda_5(T_\pp)\equiv\lambda_{13}(T_\pp)\pmod{67}$;
 \item $\lambda_3(T_\pp)\equiv\lambda_{11}(T_\pp),\lambda_4(T_\pp)\equiv\lambda_{12}(T_\pp)\pmod{67}$;
 \item $\lambda_2(T_\pp)\equiv\lambda_6(T_\pp)\pmod{19}$;
 \item $\lambda_1(T_\pp)\equiv\lambda_3(T_\pp)\pmod{\qq}$ with $\qq\mid 191$ or $2161$ (similarly for $\lambda_4(T_{\pp})$);
 \item $\lambda_8(T_\pp)\equiv\lambda_{10}(T_\pp)\pmod{29}$;
 \item $\lambda_{13}(T_\pp)\equiv\lambda_{14}(T_\pp)\pmod{11}$.
 \end{enumerate}
 The first four are accounted for by congruences between cusp forms and Eisenstein series. We have already met $67\mid(\zeta_E(6)/\pi^{12})$, but also $19\mid(\zeta_E(8)/\pi^{16})$ and $191\cdot 2161\mid(\zeta_E(10)/\pi^{20})$. In fact $19^2\mid(\zeta_E(8)/\pi^{16})$, and the congruence in (3) appears to be modulo $19^2$. To explain the congruences (5) and (6) we shall need the following.
  \begin{prop}\label{JLR}
 Let $\pi_0$ be a cuspidal automorphic representation of $\GL_2(\A_E)$ ($E$ a real quadratic field) with trivial character and $\pi_{0,\infty_1}|_{\SL_2(\R)}$ and $\pi_{0,\infty_2}|_{\SL_2(\R)}$ isomorphic to the discrete series representations $D_{k_1}^+\oplus D_{k_1}^-$, $D_{k_2}^+\oplus D_{k_2}^-$ respectively, say $k_1>k_2\geq 2$. Let $\mathcal{N}_0$ be the level of $\pi_0$ and let $N=\Nm(\mathcal{N}_0)d_E^2$, where $d_E$ is the discriminant. Then there is a Siegel cusp form $F$ of genus $2$, weight $\Sym^j(\C^2)\otimes\det^{\kappa}$, with $(j,\kappa)=(k_2-2,2+\frac{k_1-k_2}{2})$, and paramodular level $N$, such that $L(\Spin,F,s)=L(\pi_0,s)$.
 \end{prop}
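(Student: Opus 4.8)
The plan is to produce $F$ from a theta lift, in the manner of Johnson-Leung and Roberts \cite{JR}. Let $V$ be the four-dimensional quadratic space over $\Q$ with discriminant (the class of) $d_E$, everywhere trivial Hasse invariant, and signature $(2,2)$ at $\infty$, so that its connected similitude orthogonal group is $\mathrm{GSO}(V)\cong(\Res_{E/\Q}\GL_2\times\GL_1)/\{(z,\Nm_{E/\Q}(z)^{-1}):z\in\Res_{E/\Q}\GL_1\}$, with $\mathrm{SO}(V)(\R)^{\circ}\cong(\SL_2(\R)\times\SL_2(\R))/\{\pm1\}$, the two factors matching the two real places of $E$. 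The representation $\pi_0$, together with the trivial central character (which pins down the $\GL_1$-factor), determines a cuspidal automorphic representation $\sigma$ of $\mathrm{GSO}(V)(\A)$; I would extend it to $\mathrm{GO}(V)(\A)$ and set $\Pi=\Theta(\sigma)$, its theta lift to $\GSp_4(\A)$, normalised (by the choice of additive character and similitude conventions) to have trivial central character.

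First I would check that $\Pi$ is non-zero and cuspidal. Since $k_1>k_2$ the two archimedean components of $\pi_0$ are inequivalent, so $\pi_0$ is not isomorphic to its $\Gal(E/\Q)$-conjugate, hence is not a base change from $\GL_2(\A_\Q)$; equivalently the Asai $L$-function $L(s,\pi_0,\mathrm{As})$ is holomorphic and non-zero at $s=1$. By Roberts' analysis of theta lifts to $\GSp_4$ (the non-split analogue of the Yoshida lift) this is exactly the condition for $\Theta(\sigma)$ to be cuspidal and non-zero on the first rung of the Witt tower; alternatively one invokes the Rallis inner product formula in the form of Gan--Qiu--Takeda \cite{GQT} together with the non-vanishing of the relevant special value of $L(s,\pi_0,\mathrm{As})$. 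Thus $\Pi$ is a cuspidal automorphic representation of $\GSp_4(\A)$, and its functorial transfer to $\GL_4(\A_\Q)$ is the cuspidal automorphic induction $\mathrm{AI}_{E/\Q}(\pi_0)$.

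Next I would identify the local components of $\Pi$. At the archimedean place the explicit theta correspondence for the dual pair attached to $\mathrm{SO}(2,2)$ sends the pair of holomorphic discrete series of the two $\SL_2(\R)$-factors of weights $k_1$ and $k_2$ to the holomorphic discrete series of $\GSp_4(\R)$ with minimal $K$-type $\Sym^{k_2-2}(\C^2)\otimes\det^{\,2+(k_1-k_2)/2}$; here $k_1\equiv k_2\pmod 2$ (forced by triviality of the central character, so both are even), $j=k_2-2\geq 0$ is even and $\kappa=2+(k_1-k_2)/2\geq 3$, so $\Pi_\infty$ is an honest holomorphic discrete series and the automorphic form attached to $\Pi$ is (the adelisation of) a genuine holomorphic vector-valued Siegel cusp form $F$ of weight $\Sym^j(\C^2)\otimes\det^{\kappa}$. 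At a finite prime $p\nmid d_E\Nm(\mathcal{N}_0)$ everything is unramified and the Satake parameter of $\Pi_p$ is read off from that of $\pi_0$ by compatibility of theta with the Satake isomorphism. At primes dividing $d_E$ or $\mathcal{N}_0$ one appeals to the explicit local theta correspondence together with the paramodular newform theory of Roberts--Schmidt to see that $\Pi_p$ is paramodular with conductor exponents summing to paramodular level exactly $N=\Nm(\mathcal{N}_0)\,d_E^2$; the factor $d_E^2$ is the usual contribution of the discriminant of $V$, as in \cite{JR}. The identity $L(\Spin,F,s)=L(\pi_0,s)$ then follows: the unramified and archimedean matchings give it up to finitely many Euler factors, and the remaining factors coincide with those of $\mathrm{AI}_{E/\Q}(\pi_0)$ by the same local theta computation (or, once the global transfer to $\GL_4(\A_\Q)$ is in hand, because both sides equal the standard $L$-function of $\mathrm{AI}_{E/\Q}(\pi_0)$).

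The step I expect to be the main obstacle is the bad-prime bookkeeping: showing that $\Pi_p$ contains a paramodular vector of exactly the predicted level at the primes dividing $d_E$ and $\Nm(\mathcal{N}_0)$, and that the local spin factors there are correct, which requires a careful analysis of the local theta correspondence, including at $p=2$. By contrast, the archimedean weight computation and the cuspidality/non-vanishing are comparatively routine once the base-change obstruction is seen to vanish. As this is essentially the Johnson-Leung--Roberts construction \cite{JR}, here extended to Hilbert modular forms of non-parallel weight (hence to vector-valued $F$), the bulk of the work is either already in \cite{JR} or a straightforward adaptation of it.
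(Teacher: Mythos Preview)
Your proposal is correct and follows essentially the same route as the paper: both prove the result as the Johnson-Leung--Roberts theta lift from $\mathrm{GSO}(V)$ (with $V$ the four-dimensional quadratic space of discriminant $d_E$) to $\GSp_4$, with the finite-place analysis and paramodular level taken over verbatim from \cite{JR} and only the archimedean component requiring a new computation.

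The only real difference is in presentation and in how the archimedean weight is pinned down. The paper is terser: it simply cites Roberts' theorem \cite[Theorem 8.6]{Ro} as the engine, declares the finite-place analysis to be ``exactly as in \cite{JR}'', and then identifies the holomorphic discrete series in the archimedean $L$-packet $\Pi(\phi(\pi_{0,\infty}))$ by writing down its Harish-Chandra parameter $\lambda=\bigl(\tfrac{k_1+k_2-2}{2},\tfrac{k_1-k_2}{2}\bigr)$ and reading off the Blattner parameter $(j+\kappa,\kappa)=\bigl(\tfrac{k_1+k_2}{2},2+\tfrac{k_1-k_2}{2}\bigr)$. You instead state the archimedean result as the output of the explicit theta correspondence for $\mathrm{SO}(2,2)\times\Sp_4$. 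Both are fine; the paper's route via Harish-Chandra/Blattner parameters is arguably cleaner since it avoids having to unpack the archimedean theta correspondence. One small slip: your parenthetical ``so both are even'' does not follow merely from triviality of the central character (which only forces $k_1\equiv k_2\pmod 2$); it is, however, needed for $j=k_2-2$ to be even, and holds in all the applications in the paper.
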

 \begin{proof}
 This is a mild generalisation of part of a theorem of Johnson-Leung and Roberts \cite[Main Theorem]{JR}, which is the case $k_2=2, k_1=2n+2$. It is likewise an application of a theorem of Roberts \cite[Theorem 8.6, Introduction]{Ro}. The analysis at finite places (leading to paramodular level) is exactly as in \cite{JR}. The only difference is at archimedean places. To make the generalisation, we simply observe that the $L$-packet $\Pi(\phi(\pi_{0,\infty}))$ (in the notation of \cite[\S 3]{JR}) contains the discrete series representation of $\GSp_2(\R)$ denoted $\pi_{\lambda}[c]$ in \cite[p. 207]{Mo}, with $c=0$ and Harish-Chandra parameter $\lambda=(\lambda_1,\lambda_2)=(\frac{k_1+k_2-2}{2},\frac{k_1-k_2}{2})$. The Blattner parameter is $(\Lambda_1,\Lambda_2)=(\lambda_1,\lambda_2)+(1,2)=(\frac{k_1+k_2}{2},2+\frac{k_1-k_2}{2})$. This is $(j+\kappa,\kappa)$, where the lowest $K_{\infty}$-type is $\mathrm{Sym}^j(\C^2)\otimes \mathrm{det}^{\kappa}$, so we recover $(j,\kappa)=(k_2-2,2+\frac{k_1-k_2}{2})$.
 \end{proof}
 Note that the case $k_1=k_2$ (which requires a limit of discrete series representation with $\lambda_2=0$) appears in the proof of \cite[Theorem 3.1]{DSp}.

 \subsubsection{$\mathbf{\lambda_8(T_\pp)\equiv\lambda_{10}(T_\pp)\pmod{29}}$} Recall that the putative Arthur parameters for $\mathbf{i=8}$ and $\mathbf{i=10}$ are $\Delta_7[4]\oplus[1]\oplus[3]$ and $\Delta_{(9,5)}[2]\oplus\Delta_{(5,9)}[2]\oplus[1]\oplus[3]$, respectively.

Before explaining the congruence in question, first we consider a related congruence. We apply the above proposition with $E=\Q(\sqrt{5})$, $\pi_0=\Delta_{(9,5)}$, $\mathcal{N}=(1)$, so we get $F$ of weight $(j,\kappa)=(4,4)$ and paramodular level $5^2$. Note that $L(\Spin,F,s)$ has rational coefficients in its Dirichlet series. For primes $p\neq 5$, let $\lambda_F(p)$ be the Hecke eigenvalue for $T(p)$ (associated to $\diag(1,1,p,p)$) on $F$.
 Let $g_1, g_2$ be the conjugate pair of eigenforms spanning $S_8(\Gamma_0(5),\chi_5)$, where $$g_1=q+2\sqrt{-29}q^2+6\sqrt{-29}q^3+12q^4+(75-50\sqrt{-29})q^5+\ldots.$$ The first thing we notice of course is that the coefficient field $\Q(\sqrt{-29})$ is ramified at $29$, the prime in question. Let $\qq=(\sqrt{-29})$.
 There appears to be a congruence, for all primes $p\neq5$:
 $$\lambda_F(p)\equiv a_{g_1}(p)(1+p^2)\pmod{\qq}.$$
 For primes $p\neq 5$, since $T_p$ and $\langle p\rangle T_p$ are adjoints for the Petersson inner product, $a_p(g_1)$ is real (hence rational) or purely imaginary (hence a multiple of $\sqrt{-29}$) according as $\chi_5(p)=1$ or $-1$ respectively. When $\chi_5(p)=-1$, $\lambda_F(p)=0$ and $a_{g_1}(p)$ is a multiple of $\qq$, so the congruence holds for these $p$.
 Here is a table of what happens for the first few split primes. Note that $\lambda_F(p)=a_h(\pp)+a_h(\overline{\pp})=\tr_{E/\Q}(a_h(\pp))$, where $h$ spans $S_{[10,6]}(\SL_2(\OO_E))$, and $\pp\mid(p)$ in $E$.
 \vskip5pt
 \begin{center}
\begin{tabular}{|c|c|c|c|}\hline $p$ & $\lambda_F(p)$ & $a_{g_1}(p)$ & $\lambda_F(p)-a_{g_1}(p)(1+p^2)$\\\hline $11$ & $2184$ & $-6828$ & $29\cdot 28800$\\$19$ & $-133640$ & $6860$ & $29\cdot(-90240)$\\$29$ & $2170140$ & $25590$ & $29\cdot(-668160)$\\$31$ & $-630656$ & $82112$ & $29\cdot(-2768640)$ \\\hline
\end{tabular}
\end{center}
\vskip5pt

Observe that $29\nmid a_{g_1}(29)$, so $g_1$ is ``ordinary'' at $\qq$.
Let us now consider a non-experimental reason to believe the congruence. The right hand side of the congruence is $a_p(g_1)(1+p^{\kappa-2})$, which would be the eigenvalue of $T(p)$ on a vector-valued Klingen-Eisenstein series of weight $\Sym^j(\C^2)\otimes\det^{\kappa}$ (with $(j,\kappa)=(4,4), j+\kappa=k=8$) attached to $g_1$. The scalar-valued Klingen-Eisenstein series of paramodular level is dealt with in \cite{SS}, and the vector-valued case could be done similarly. In particular, the analysis at finite places would be the same, and we would be looking at something of paramodular level $5^2$, just like $F$. So our congruence looks like one between a cusp form and a Klingen-Eisenstein series. This is not quite so, because the convergence condition $\kappa>n+r+1=2+1+1=4$ does not hold. Nonetheless, it would be an ``Eisenstein'' congruence, between a cuspidal automorphic representation of $\GSp_2(\A)$ and an automorphic representation of $\GSp_2(\A)$ induced from the Klingen parabolic subgroup. Conjecture 4.2 of \cite{BD} is a very general conjecture on the existence of Eisenstein congruences. The case of $\GSp_2$ and its Klingen parabolic subgroup is worked out in \S 6, where the analogue of $g_1$ has trivial character, but it is easy to see that the condition under which the conjecture would predict our congruence is that $q>2(j+\kappa)$ (i.e. $29>16$) and $$\ord_{\qq}\left(\frac{L_{\{5\}}(\ad^0(g_1),3)}{\Omega}\right)>0,$$ where the adjoint $L$-function $L(\ad^0(g_1),s)$ is also $L(\Sym^2g_1,s+k-1,\chi_5)$, with $k=8$, and the subscript $\{5\}$ denotes omission of the Euler factor $(1-5^{-s})^{-1}$ at $5$. Here $\Omega$ is a Deligne period normalised as in \cite[\S 4]{BD}, and $3=1+s$ with $s=\kappa-2=2$ (which satisfies the condition $s>1$ from \cite{BD}). Note that Conjecture 4.2 of \cite{BD} only predicts a cuspidal automorphic representation, of the appropriate infinitesimal character and unramified away from $5$, and does not specify the paramodular level $5^2$ (for $F$).

The relation between the Deligne period and the Petersson norm is (up to divisors of $5(k!)$)
$$\Omega=\pi^{13}(g_1,g_1)\eta_{g_1}^{-1},$$ where $\eta_{g_1}$ is a certain congruence ideal. This employs ideas of Hida, as in \cite[\S 3]{Du}. By \cite[Proposition 2.2]{Du}, $\ord_{\qq}(\eta_{g_1})=1$. For us, $\ord_{\qq}(\eta_{g_1})>0$ would suffice, and this may appear to follow from the obvious congruence of $q$-expansions $g_1\equiv g_2\pmod{\qq}$, but note that the definition of $\eta_{g_1}$ is in terms of congruences between cohomology classes rather than $q$-expansions. Anyway, it follows that the condition $\ord_{\qq}\left(\frac{L_{\{5\}}(\ad^0(g_1),3)}{\Omega}\right)>0$ is equivalent to the integrality at $\qq$ of $\frac{L_{\{5\}}(\ad^0(g_1),3)}{\pi^{13}(g_1,g_1)}$. A theorem of Katsurada \cite[Corollary 4.3]{Ka}, which depends on $\chi_D$ being an even character, provides a way of computing this number precisely. Note that Katsurada's $L(g_1,s,\chi_D)$ is our $L_{\{5\}}(\ad^0(g_1),s)$, with the Euler factor $(1-5^{-s})^{-1}$ at $5$ already missing. Also his Petersson norm is ours divided by the volume of a fundamental domain for $\Gamma_0(5)$, which is $(\pi/3)5(1+(1/5))=2\pi$.

In Katsurada's case (c-1), substituting $m=1$ gives us a linear equation for the unknowns $\frac{\overline{c}\,L_{\{5\}}(\ad^0(g_1),3)}{\pi^{13}(g_1,g_1)}$ and $\frac{c\,L_{\{5\}}(\ad^0(g_2),3)}{\pi^{13}(g_2,g_2)}$, with coefficients the same simple multiple of $a_1(g_1)=a_1(g_2)=1$. Here $c$ is a complex number of absolute value $1$ such that $g_1| W_5=cg_2$, where $W_5$ is an Atkin-Lehner operator. First observe that $L_{\{5\}}(\ad^0(g_1),3)=L_{\{5\}}(\ad^0(g_2),3)$, since $g_2$ and $g_1$ are related by twist. Also $(g_1,g_1)=(g_2,g_2)$ since the Fourier coefficients of $g_2$ are obtained from those of $g_1$ by complex conjugation (or using the relation between $(g,g)$ and $L(\ad^0(g),1)$ \cite[Theorem 5.1]{Hi2}). Thus we actually have a linear equation in the single unknown $\frac{L_{\{5\}}(\ad^0(g_1),3)}{\pi^{13}(g_1,g_1)}$. We must check that it is non-trivial, i.e. that $\overline{c}\neq -c$. We have $c=w_{\infty}w_5$, with $w_{\infty}=(-1)^{k/2}$. By local-global compatibility \cite{Ca}, $w_5$ may be determined from a $2$-dimensional representation of the Weil group $\mathcal{W}_5$, which according to a theorem of Langlands and Carayol \cite[Theorem 4.2.7 (3)(a)]{Hi} is diagonal, so $w_5$ may be written as a product of local constants for two characters, whose product is a power of the cyclotomic character, and using Tate's local functional equation we find this product has to be $\pm 1$, in particular $\overline{c}=c$, so the linear equation for $\frac{L_{\{5\}}(\ad^0(g_1),3)}{\pi^{13}(g_1,g_1)}$ is non-trivial.  The ``right-hand-side'' of the linear equation, which comes from Fourier coefficients of an Eisenstein series of genus $2$, is very complicated, and would be tedious to compute exactly, but it is not too difficult to see at least that the solution to the equation will be integral at $\qq$, as required.

Now the congruence between $\Delta_7[4]\oplus[1]\oplus[3]$ and $\Delta_{(9,5)}[2]\oplus\Delta_{(5,9)}[2]\oplus[1]\oplus[3]$ can be accounted for by the apparent congruence we have just been discussing. This is because $\Delta_7$ is the base-change to $E$ of the cuspidal automorphic representation of $\GL_2(\A)$ attached to $g_1$ (or equally to $g_2$, which is the newform associated to the twist by $\chi_5$ of $g_1$), and because the Satake parameters of $F$ are ``induced'' from those of $\Delta_{(9,5)}$ (or equally of $\Delta_{(5,9)}$), as in Proposition \ref{JLR}. For example, at a factor $\pp$ of a split prime $p$, the congruence between $\Delta_7[4]\oplus[1]\oplus[3]$ and $\Delta_{(9,5)}[2]\oplus\Delta_{(5,9)}[2]\oplus[1]\oplus[3]$ would give
$$a_{g_1}(p)(1+p+p^2+p^3)+p^5+(p^4+p^5+p^6)$$ $$\equiv a_h(\pp)(1+p)+a_h(\overline{\pp})(1+p)+p^5+(p^4+p^5+p^6)\pmod{\qq}.$$
This is $$a_{g_1}(p)(1+p^2)(1+p)\equiv \lambda_F(p)(1+p)\pmod{\qq},$$ which is just $(1+p)$ times the Klingen-Eisenstein congruence.
\begin{remar} The $2$-dimensional mod $\qq$ representation of $\Gal(\Qbar/\Q)$ attached to $g_1$ is ``dihedral'', in particular its restriction to $\Gal(\Qbar/E)$ is reducible, cf. \cite[Proposition 1.2(2)]{Du}. The congruence would imply that the $2$-dimensional mod $\qq$ representation of $\Gal(\Qbar/E)$ attached to $\Delta_{(9,5)}$ is likewise reducible.
In fact, it appears to be the case that if $\alpha$ is a totally positive generator of any prime ideal $\pp$ in $\OO_E$ (even $\pp=(\sqrt{5})$), with algebraic conjugate $\overline{\alpha}$, and $\qq'=(29,\sqrt{5}-11)$, then
$$a_h(\pp)\equiv \overline{\alpha}^7+\alpha^9\overline{\alpha}^2\equiv\overline{\alpha}^7+\alpha^7\Nm\pp^2 \pmod{\qq'}.$$
This is independent of the choice of $\alpha$, since if $\epsilon^+$ is a totally positive unit of $\OO_E$ then $(\epsilon^+)^7\equiv 1\pmod{\qq'}$, which is what leads to the dihedral congruence, cf. \cite[Proposition 1.2(4)]{Du}. Without proving the global Arthur parameter for $\mathbf{i=10}$, we have not actually proved this congruence for $a_h(\pp)$.
It should be compared (for split $p$) with the congruence
$$a_{g_1}(p)\equiv \overline{\alpha}^7+\alpha^7 \pmod{\qq'}.$$ We can see how $a_{g_1}(p)(1+p^2)$ gets to be the same as $a_h(\pp)+\overline{a_h(\pp)}\pmod{\qq'}$, how one-dimensional composition factors get rearranged.
\end{remar}
\begin{remar} The same argument as above shows that also $\ord_{\qq}\left(\frac{L_{\{5\}}(\ad^0(g_1),5)}{\Omega}\right)>0$ and $\ord_{\qq}\left(\frac{L_{\{5\}}(\ad^0(g_1),7)}{\Omega}\right)>0$, so we would expect to observe also congruences of Klingen-Eisenstein type for $g_1$ with $(j,\kappa)=(2,6)$ and $(0,8)$, i.e. $(j+2\kappa-3,j+1)=(11,3)$ and $(13,1)$, and indeed we do.
We find that $\dim(S_{[12,4]}(\SL_2(\OO_E)))=1$, and for the associated $F$ of weight $(j,\kappa)=(2,6)$ and paramodular level $5^2$,
\vskip5pt
\begin{center}
\begin{tabular}{|c|c|c|c|}\hline $p$ & $\lambda_F(p)$ & $a_{g_1}(p)$ & $\lambda_F(p)-a_{g_1}(p)(1+p^4)$\\\hline $11$ & $-795576$ & $-6828$ & $29\cdot 3420000 $\\$19$ & $21628600$ & $6860$ & $29\cdot(-30082080)$\\$29$ & $-36938100$ & $25590$ & $29\cdot(-625389120)$\\$31$ & $92822464$ & $82112$ & $29\cdot(-2611704000)$ \\\hline
\end{tabular}
\end{center}
\vskip5pt
Moreover, if now $h$ denotes a generator of $S_{[12,4]}(\SL_2(\OO_E))$, then it appears that
$$a_h(\pp)\equiv \overline{\alpha}^7+\alpha^{11}\overline{\alpha}^4\equiv \overline{\alpha}^7+\alpha^7\Nm\pp^4\pmod{\qq'}.$$

Similarly, $\dim(S_{[14,2]}(\SL_2(\OO_E)))=1$, and for the associated $F$ of weight $(j,\kappa)=(0,8)$ and paramodular level $5^2$ we find
\vskip5pt
\begin{center}
\begin{tabular}{|c|c|c|c|}\hline $p$ & $\lambda_F(p)$ & $a_{g_1}(p)$ & $\lambda_F(p)-a_{g_1}(p)(1+p^6)$\\\hline $11$ & $8606664$ & $-6828$ & $29\cdot 417408000 $\\$19$ & $333407800$ & $6860$ & $29\cdot(-11117287680)$\\$29$ & $-7660887300$ & $25590$ & $29\cdot(-15754334169600)$\\$31$ & $-200383616$ & $82112$ & $29\cdot(-2512927680000)$ \\\hline
\end{tabular}
\end{center}
\vskip5pt
and if now $h$ denotes a generator of $S_{[14,2]}(\SL_2(\OO_E))$, then it appears that
$$a_h(\pp)\equiv \overline{\alpha}^7+\alpha^{13}\overline{\alpha}^6\equiv \overline{\alpha}^7+\alpha^7\Nm\pp^6\pmod{\qq'}.$$
\end{remar}

\subsubsection{$\mathbf{\lambda_{13}(T_\pp)\equiv\lambda_{14}(T_\pp)\pmod{11}}$} Recall that the putative Arthur parameters for $\mathbf{i=13}$ and $\mathbf{i=14}$ are $\Sym^2\Delta_5\oplus\Delta_5[4]\oplus[1]$ and $\Sym^2\Delta_5\oplus\Delta_{(7,3)}[2]\oplus\Delta_{(3,7)}[2]\oplus[1]$, respectively. We apply Proposition \ref{JLR} with $E=\Q(\sqrt{5})$, $\pi_0=\Delta_{(7,3)}$, $\mathcal{N}=(1)$, so we get $F$ of weight $(j,\kappa)=(2,4)$ and paramodular level $5^2$. For primes $p\neq 5$, let $\lambda_F(p)$ be the Hecke eigenvalue for $T(p)$ on $F$.
 Let $f_1, f_2$ be the conjugate pair of eigenforms spanning $S_6(\Gamma_0(5),\chi_5)$, where $$f_1=q-2\sqrt{-11}q^2+6\sqrt{-11}q^3-12q^4+(-45-10\sqrt{-11})q^5+\ldots.$$
 Noting the appearance of $\sqrt{-11}$, we may now proceed very much as in the other case. In particular, $\Delta_5$ is the base-change to $E$ of the cuspidal automorphic representation of $\GL_2(\A)$ attached to $f_1$ (or equally to $f_2$). Further, if now $h$ denotes a generator of $\dim(S_{[8,4]}(\SL_2(\OO_E))$ and $\qq'=(11,\sqrt{5}-4)$, then it appears that
$$a_h(\pp)\equiv \overline{\alpha}^5+\alpha^7\overline{\alpha}^2\equiv \overline{\alpha}^5+\alpha^5\Nm\pp^2\pmod{\qq'}.$$
As in all the above cases, the coefficient field of $h$ is $E$, and $a_h(\overline{\pp})$ is the algebraic conjugate of $a_h(\pp)$.

Since $\dim(S_{[10,2]}(\SL_2(\OO_E)))=0$, we cannot find a congruence for $(j,\kappa)=(0,6)$ in the same manner. We may explain this as follows. Suppose there is a congruence $\lambda_F(p)\equiv a_{f_1}(p)(1+p^{\kappa-2})\pmod{\qq}$, where $F$ is a genus-$2$ cuspidal Hecke eigenform of weight $(j,\kappa)$ and level trivial away from $5$, with irreducible $4$-dimensional $\qq$-adic Galois representation $\rho_{F,\qq}$. We expect $\ord_{\qq}\left(\frac{L_{\{5\}}(\ad^0(f_1),\kappa-1)}{\Omega}\right)> 0$ by the Bloch-Kato conjecture, because an adaptation of a well-known construction of Ribet produces a non-trivial extension of $\rhobar_{f_1,\qq}$ ($2$-dimensional mod $\qq$ Galois representation attached to $f_1$) by $\rhobar_{f_1,\qq}(2-\kappa)$ (Tate twist) inside the residual representation $\rhobar_{F,\qq}$, and a non-zero class in $H^1(\Q,\ad^0\rhobar_{f_1,\qq}(2-\kappa))$. This satisfies the Bloch-Kato local conditions away from $p=5$, so contributes to the numerator of the conjectural formula for $\frac{L_{\{5\}}(\ad^0(f_1),\kappa-1)}{\Omega}$. Now if the congruence arises in the special way described above, via a congruence for a non-parallel weight Hilbert modular form, because that form has level $1$ it is not difficult to show (using inflation-restriction) that the class also satisfies the local condition at $5$, so we should in fact see $\ord_{\qq}\left(\frac{L(\ad^0(f_1),\kappa-1)}{\Omega}\right)> 0$, for the complete $L$-value with no missing Euler factor. However, what is special about this example is that $5^5\equiv 1\pmod{11}$, so that $\ord_{\qq}((1-5^{-5})^{-1})=-2$, hence when the Euler factor is put back in, $$\ord_{\qq}\left(\frac{L(\ad^0(f_1),\kappa-1)}{\Omega}\right)<\ord_{\qq}\left(\frac{L_{\{5\}}(\ad^0(f_1),\kappa-1)}{\Omega}\right),$$ making it seem unlikely that $\ord_{\qq}\left(\frac{L(\ad^0(f_1),\kappa-1)}{\Omega}\right)>0$. Thus, though we may still expect the Klingen-Eisenstein congruence to happen, we shouldn't expect it to arise from a Johnson-Leung-Roberts lift of a non-parallel weight Hilbert modular form satisfying the type of congruence encountered in the other examples.

\subsubsection{Examples with $E=\Q(\sqrt{2})$} To reinforce what we have found, we consider two more examples. If $E=\Q(\sqrt{2})$ then $D=8$. The space $S_4(\Gamma_0(8),\chi_8)$ is spanned by a conjugate pair of eigenforms, one of which is $$q+(-1-\sqrt{-7})q^2+2\sqrt{-7}q^3+(-6+2\sqrt{-7})q^4-4\sqrt{-7}q^5+(14-2\sqrt{-7})q^6-8q^7+\ldots,$$ for which $\qq=(\sqrt{-7})$ is a dihedral congruence prime. Letting $(j,\kappa)=(0,4)$, so $[j+2\kappa-2,j+2]=[6,2]$, we might expect a congruence involving a Hecke eigenform in $S_{[6,2]}(\SL_2(\OO_E))$, but $\dim(S_{[6,2]}(\SL_2(\OO_E)))=0$. As in the previous paragraph, we can explain this failure by $\ord_{\qq}((1-2^{-(\kappa-1)})^{-1})<0$, since $2^3\equiv 1\pmod{7}$.

On the other hand, the space $S_{14}(\Gamma_0(8),\chi_8)$ is spanned by a conjugate pair of eigenforms, one of which is $q+(-56-8\sqrt{-79})q^2+258\sqrt{-79}q^3+\ldots,$ for which $\qq=(\sqrt{-79})$ is a dihedral congruence prime. Letting $(j,\kappa)=(2,14)$, so $[j+2\kappa-2,j+2]=[26,2]$, we find that $S_{[26,2]}(\SL_2(\OO_E))$ is spanned by a pair of Hecke eigenforms, with coefficient field $E(\sqrt{11713})$, conjugate over $E$. Letting $h$ be one of them, and $\qq'=(\sqrt{2}-9,\sqrt{11713}-10)$, a divisor of $79$, it does appear that for $\alpha$ any totally positive generator of a prime ideal $\pp$, with $\Gal(E/\Q)$-conjugate $\overline{\alpha}$,
$$a_h(\pp)\equiv \overline{\alpha}^{13}+\alpha^{25}\overline{\alpha}^{12}\equiv \overline{\alpha}^{13}+\alpha^{13}\Nm\pp^{12}\pmod{\qq'}.$$
Note that $2^{13}\not\equiv 1\pmod{79}$.

All of this seems to support the following conjecture. (We have to introduce the character $\psi$ because we no longer assume that $E$ has narrow class number $1$.)
Let $g\in S_k(\Gamma_0(D),\chi_D)$ be a normalised Hecke eigenform, where $D>0$ is the discriminant of a real quadratic field $E=\Q(\sqrt{D})$, with associated character $\chi_D$. Let $g^c$ be the normalised Hecke eigenform obtained from $g$ by complex-conjugating the Fourier coefficients. Suppose that $g\equiv g^c\pmod{\qq}$, where $\qq\mid q$, with $q>2k$ and $q\nmid D$, is a prime divisor of the coefficient field $K_g$, ramified in $K_g/K_g^+$, where $K_g^+$ is the totally real subfield of the CM field $K_g$. Suppose also that $g$ is ordinary at $\qq$ and that the residual representation $\rhobar_{g,\qq}$ of $\Gal(\Qbar/\Q)$ is absolutely irreducible. Necessarily $\rhobar_{g,\qq}$ is induced from a character of $\Gal(\Qbar/E)$ associated by class field theory with $\psi:\A_E^{\times}/E^{\times}\rightarrow \FF_q^{\times}$, a finite-order character of conductor $\mathcal{Q}+\infty_1$ such that $\psi(a)\equiv a^{1-k}\pmod{\mathcal{Q}}$ for $a\in \OO_{\mathcal{Q}}^{\times}$, where $(q)=\mathcal{Q}\overline{\mathcal{Q}}$ in $\OO_E$. It is also induced from $\overline{\psi}$, the $\Gal(E/\Q)$ conjugate, of conductor $\overline{\mathcal{Q}}+\infty_2$. (See \cite[Theorems 2.1, 2.11]{BG} and their proofs for more on this.)
\begin{conj}\label{hilbcong}
If $k=j+\kappa$ with $j\geq 0$ even and $\kappa\geq 4$, and if for all primes $p\mid D$, $p^{\kappa-1}\not\equiv 1\pmod{q}$, then there exists a cuspidal eigenform $h\in S_{[j+2\kappa-2,j+2]}(\GL_2(\A_E),\prod\GL_2(\OO_{\pp}))$ and a prime divisor $\qq'\mid q$ in $K_h$ such that for any prime $\pp\nmid q$ of $\OO_E$,
$$a_h(\pp)\equiv \overline{\psi}(\pp)+\psi(\pp)\Nm\pp^{\kappa-2}\pmod{\qq'}.$$
\end{conj}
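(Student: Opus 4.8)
The plan is to read the conjecture as an Eisenstein congruence for Hilbert modular forms over $E$ and attack it in three stages: identify the relevant Hilbert--Eisenstein series, produce a congruent cuspidal eigenform from an $L$-value divisibility, and then sharpen the level to $1$. For the first stage, I would identify the right-hand side as a Hecke eigensystem of Eisenstein type. The map $\pp\mapsto\overline\psi(\pp)+\psi(\pp)\Nm\pp^{\kappa-2}$ is, after the usual normalisation of infinity types and central character, the eigensystem of the Hilbert--Eisenstein series $\mathcal E$ of weight $[j+2\kappa-2,j+2]$ attached to the pair of finite-order Hecke characters $(\overline\psi,\ \psi\,|\cdot|^{\kappa-2})$ of $\A_E^{\times}$; since $\kappa\ge 4$ and $j\ge 0$ is even, this weight is cohomological. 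Its level divides $(q)=\mathcal Q\overline{\mathcal Q}$, and its constant terms along the cusps are, up to units and powers of $q$, special values of the Hecke $L$-function $L_E(\theta,\cdot)$ of the anticyclotomic-type character $\theta:=\overline\psi\,\psi^{-1}$ of $E$ --- the same character controlling the dihedral congruence $g\equiv g^c\pmod{\qq}$, in the sense that $\mathrm{ad}^0\rhobar_{g,\qq}\cong\chi_D\oplus\mathrm{Ind}_{E}^{\Q}\theta$. Thus the conjecture is equivalent to the assertion that $\mathcal E$ is congruent mod $\qq'$ to a cuspidal Hilbert eigenform, and that that eigenform may be taken unramified everywhere.

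For the congruence, I would work in a fixed space of Hilbert modular forms of weight $[j+2\kappa-2,j+2]$ and level dividing $(q)$ over $E$, show that all constant terms of $\mathcal E$ lie in $\qq'$, and then invoke the structure of the Hecke algebra (Mazur's Eisenstein-ideal argument, in the Hilbert-modular setting developed by Berger, Klosin, Wiese and others, and using that the Eisenstein subspace is a line) to extract a cuspidal eigensystem congruent to $\mathcal E$ modulo $\qq'$. The analytic input is the $\qq'$-divisibility of the pertinent special value of $L_E(\theta,\cdot)$, which I would deduce from the Iwasawa main conjecture for the totally real field $E$ (Wiles), the structural point being that $\theta$ and the cyclotomic variable interpolate $q$-adically over the weight, so that the divisibility responsible for the weight-$k$ dihedral congruence for $g$ propagates to the ``far'' critical value occurring at weight $[j+2\kappa-2,j+2]$. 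Equivalently, one may verify $q>2k$ and $\ord_{\qq}\bigl(L(\mathrm{ad}^0 g,\kappa-1)/\Omega\bigr)>0$ and appeal to the conjecture of \cite{BD} for $\GSp_4$ and its Klingen parabolic, then transport the resulting Klingen--Eisenstein congruence for a genus-$2$ Siegel cusp form of weight $(j,\kappa)$ across the Johnson--Leung--Roberts lift of Proposition \ref{JLR}, using $L(\Spin,F,s)=L(h,s)$. An alternative to the Eisenstein-ideal step is Ribet's method: construct, from the same $L$-value divisibility, a non-split extension of $\psi\,\overline{\epsilon}_q^{\,\kappa-2}$ by $\overline\psi$ inside the $q$-adic \'etale cohomology of a Hilbert modular variety (equivalently inside a class group of a ray-class field of $E$) and recognise it as cuspidal.

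It then remains, and this is the delicate part, to sharpen the level to $1$. At a prime $p\mid D$ the associated Galois/Selmer class must be shown unramified: since $\overline\psi$ has conductor prime to $p$, an inflation--restriction argument exactly as in the discussion above of $\lambda_{13}\equiv\lambda_{14}$ applies, and this is where the hypothesis $p^{\kappa-1}\not\equiv 1\pmod q$ is used --- it ensures the local Euler factor at $p$ does not carry negative $\qq$-valuation, so that the \emph{complete} value $L(\mathrm{ad}^0 g,\kappa-1)$, not merely its $\{p\}$-imprimitive version, is $\qq$-divisible, forcing the class to be unramified at $p$. At the primes above $q$, the conductor of $\psi$ is exactly $\mathcal Q$ with $\psi(a)\equiv a^{1-k}\pmod{\mathcal Q}$, so the local component of $\mathcal E$ at $q$ is a twist by a character trivial modulo $\qq'$; together with the ordinarity of $g$ at $\qq$, this lets one choose the congruent cusp form with unramified (indeed crystalline of the correct Hodge--Tate weights) component at $q$. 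Everywhere else level $1$ is automatic, and the resulting eigensystem is realised as a genuine $h\in S_{[j+2\kappa-2,j+2]}(\GL_2(\A_E),\prod\GL_2(\OO_\pp))$ by combining the existence of Galois representations for Hilbert modular forms with a Deligne--Serre-type converse in this cohomological weight. The principal obstacle I expect is the second stage: proving the exact $\qq'$-divisibility of the ``far'' critical $L$-value and developing the Eisenstein-ideal (or Ribet) machinery for Hilbert modular forms of \emph{non-parallel} weight with a residually dihedral Eisenstein series --- the existing theory is cleanest for parallel weight and more generic residual situations, and adapting the Iwasawa-theoretic input while tracking exactly the level conditions at the primes dividing $D$ and at $q$ is where the real work lies.
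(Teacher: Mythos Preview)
The statement you are attempting to prove is labelled \emph{Conjecture} in the paper, and the paper offers no proof of it. What the paper does provide is experimental evidence (the examples over $\Q(\sqrt{5})$ and $\Q(\sqrt{2})$) together with a heuristic explanation of the shape of the congruence and of the condition $p^{\kappa-1}\not\equiv 1\pmod q$, via the Bloch--Kato conjecture and an inflation--restriction argument. Your proposal is therefore not competing with an existing argument; it is an outline of how one might try to prove an open conjecture.

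As such, your plan is reasonable in broad shape and aligns well with the paper's own heuristics: the role of $L(\ad^0 g,\kappa-1)$, the inflation--restriction step for unramifiedness at $p\mid D$, and the interpretation of the hypothesis $p^{\kappa-1}\not\equiv 1\pmod q$ as preventing loss of $\qq$-divisibility when reinserting the Euler factor at $p$ are exactly the mechanisms the paper isolates. But several steps are not proofs. First, your ``equivalently'' branch appeals to Conjecture~4.2 of \cite{BD}, which is itself an open conjecture; invoking it is circular. Second, even granting a Klingen--Eisenstein congruence for some genus-$2$ cusp form $F$ of paramodular level $D^2$, the Johnson--Leung--Roberts construction of Proposition~\ref{JLR} goes from Hilbert modular forms to Siegel forms, not conversely; you would need to know that $F$ lies in the image of that lift before you could read off the Hilbert eigenform $h$, and nothing in your outline secures this. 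Third, the Iwasawa-theoretic step is stated optimistically: Wiles's main conjecture is for abelian $L$-functions of totally real fields, and you have not checked that the character $\theta=\overline\psi\psi^{-1}$ and the particular critical value you need fall within its scope, nor that the divisibility transfers to the specific ``far'' value at weight $[j+2\kappa-2,j+2]$. You yourself flag this second stage as the principal obstacle, and that is accurate: what you have written is a coherent programme, but not a proof.
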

For comparison, note that when $\kappa=2$ we have the base change (of $g$) $$h\in S_{[k,k]}(\GL_2(\A_E),\prod\GL_2(\OO_{\pp})),$$ satisfying
$$a_h(\pp)\equiv \overline{\psi}(\pp)+\psi(\pp)\pmod{\qq'}.$$

\section{Even unimodular quadratic forms over $\Q(\sqrt{3})$}
According to Hung \cite{Hu}, if $E=\Q(\sqrt{3})$ then there is a unique genus of even unimodular $\OO_E$-lattices for each even $N\geq 2$. He showed that it contains $1$ class when $N=2$, $2$ classes when $N=4$, $6$ classes when $N=6$ and $31$ classes when $N=8$. We have simultaneously diagonalised certain neighbour operators $T_\pp$ and recorded the eigenvalues later in this section. We have also produced guesses for the global Arthur parameters that recover these computed Hecke eigenvalues (and the correct $c_{\infty_1}(z), c_{\infty_2}(z)$), with the exception of three cases when $N=8$.
Things are different now, because although the class number is $1$, the narrow class number is $2$. The ray class field of conductor $\infty_1+\infty_2$ is $H=\Q(\sqrt{3},i)=\Q(\zeta_{12})$. Let $\chi:\GL_1(\A_E)\rightarrow\C^{\times}$ be the ray class character of conductor $\infty_1+\infty_2$. It takes the value $1$ on inert primes and totally positive split primes, $-1$ on the rest. It is now possible to have non-zero forms of odd weights. The central character of the associated automorphic representation of $\GL_2(\A_E)$ is then $\chi$.

Let $\Delta_3$ be the automorphic representation of $\GL_2(\A_E)$ attached to one of the Galois conjugate pair of Hecke eigenforms spanning $S_{[4,4]}(\GL_2(\A_E),\prod\GL_2(\OO_{\pp}))$. Its Galois conjugate is $\chi\otimes\Delta_3$.

Let $\Delta_5^{(4)}$ be any of the four Galois conjugate Hecke eigenforms spanning \newline $S_{[6,6]}(\GL_2(\A_E),\prod\GL_2(\OO_{\pp}))$ (so this symbol means four different things on different lines of the table).

There are three non-identity elements of $\Gal(H/\Q)$, i.e.
$$\tau:\sqrt{3}\mapsto -\sqrt{3}, i\mapsto i,\,\,\,\,\sigma:\sqrt{3}\mapsto\sqrt{3}, i\mapsto -i,\,\,\,\, \sigma\tau:\sqrt{3}\mapsto -\sqrt{3}, i\mapsto -i.$$
The space $S_{[7,7]}(\GL_2(\A_E),\prod\GL_2(\OO_{\pp}))$ is $3$-dimensional. One of the spanning Hecke eigenforms is CM, associated to a Hecke character of $H$ with $\infty$-type $z\mapsto z^6\sigma\tau(z)^6$. Let $\Delta_6$ be the associated cuspidal automorphic representation of $\GL_2(\A_E)$. We have
$$c_{\infty_1}(\Delta_6)(z)=c_{\infty_2}(\Delta_6)(z)=\diag((z/\overline{z})^{6/2},(z/\overline{z})^{-6/2}),$$
(coming from the $z^6$ and $\sigma\tau(z)^6$ factors respectively). Alternatively, $\Delta_6$ is the base-change to $E$ of the cuspidal automorphic representation of $\GL_2(\A)$ attached to the CM newform $q-27q^3+64q^4-286q^7+\ldots$ spanning $S_7(\Gamma_0(3),\chi_{-3})$.

The space $S_{[5,5]}(\GL_2(\A_E),\prod\GL_2(\OO_{\pp}))$ is $1$-dimensional, spanned by a CM form, associated to a Hecke character of $H$ with $\infty$-type $z\mapsto z^4\tau(z)^4$. Let $\Delta_4$ be the associated cuspidal automorphic representation of $\GL_2(\A_E)$. We have
$$c_{\infty_1}(\Delta_4)(z)=c_{\infty_2}(\Delta_4)(z)=\diag((z/\overline{z})^{4/2},(z/\overline{z})^{-4/2}),$$
(coming from the $z^4$ and $\tau(z)^4$ factors respectively). Alternatively, $\Delta_4$ is the base-change to $E$ of the cuspidal automorphic representation of $\GL_2(\A)$ attached to the CM newform $q-4q^2+16q^4-14q^5-\ldots$ spanning $S_5(\Gamma_0(4),\chi_{-4})$.

The space $S_{[6,2]}(\GL_2(\A_E),\prod\GL_2(\OO_{\pp}))$ is $1$-dimensional, spanned by a CM form, associated to a Hecke character of $H$ with $\infty$-type $z\mapsto z^5\tau(z)^2\sigma\tau(z)^3$. Let $\Delta_{(5,1)}$ be the associated cuspidal automorphic representation of $\GL_2(\A_E)$. We have
$$c_{\infty_1}(\Delta_4)(z)=\diag((z/\overline{z})^{5/2},(z/\overline{z})^{-5/2}),\,\,c_{\infty_2}(\Delta_4)(z)=\diag((z/\overline{z})^{1/2},(z/\overline{z})^{-1/2}),$$
from $\frac{z^5}{(z\overline{z})^{5/2}}=(z/\overline{z})^{5/2}$ and $\frac{z^2\overline{z}^3}{(z\overline{z})^{5/2}}=(z/\overline{z})^{-1/2}$, respectively.

Some Hecke eigenvalues:
\vskip5pt
\begin{center}
\begin{tabular}{|c|c|c|c|c|}\hline & $T_{(1+\sqrt{3})}$ & $T_{(\sqrt{3})}$ & $T_{(4+\sqrt{3})}$ & $T_{(5)}$\\\hline$\Delta_3$ & $2\sqrt{3}$ & $-4\sqrt{3}$ & $-10$ & $170$\\$\Delta_6$ & $0$ & $0$ & $506$ & $2\cdot 5^6=31250$\\$\Delta_4$ & $0$ & $0$ & $-238$ & $(2+i)^8+(2-i)^8=-1054$\\$\Delta_{(5,1)}$ & $0$ & $0$ & $350-432\sqrt{3}$ & $5^3((2+i)^4+(2-i)^4)=-1750$\\$\Delta_{(1,5)}$ & $0$ & $0$ & $350+432\sqrt{3}$ & $-1750$\\\hline
\end{tabular}
\end{center}
\vskip5pt
The zeros result from $\chi(1+\sqrt{3})=\chi(\sqrt{3})=-1$ and the CM nature of the forms. For some of the other entries, $z=\frac{3}{2}+i+\frac{\sqrt{3}}{2}i$ is an element of $H$ generating a prime ideal of norm $13$, dividing $(4+\sqrt{3})$. One finds that
$$(z\sigma\tau(z))^6=\left(\frac{5+3\sqrt{3}i}{2}\right)^6=253-1260\sqrt{3}i,\,\,\,\,(z\sigma\tau(z))^6+\sigma(z\sigma\tau(z))^6=2\times 253=506,$$
$$(z\tau(z))^4=(2+3i)^4=-119-120i,\,\,\,\,(z\tau(z))^4+\sigma(z\tau(z))^4=2\times(-119)=-238,\text{ and}$$
$$z^5\tau(z)^2\sigma\tau(z)^3=175-420i-90\sqrt{3}i-216\sqrt{3},\,\,\,\,2\times(175-216\sqrt{3})=350-432\sqrt{3}.$$
\vskip5pt
\par $\mathbf{N=2}$
\begin{center}
\begin{tabular}{|c|c|c|c|c|}\hline$\mathbf{i}$ & $\lambda_i\left(T_{(1+\sqrt{3})}\right)$ & $\lambda_i\left(T_{(\sqrt{3})}\right)$ & $\lambda_i\left(T_{(5)}\right)$  & Global Arthur parameters (conj'l)\\\hline $\mathbf{1}$ & $0$ & $0$ & $2$ & $[1]\oplus\chi$
\\\hline
\end{tabular}
\end{center}
\vskip5pt
\par $\mathbf{N=4}$
\begin{center}
\begin{tabular}{|c|c|c|c|c|}\hline$\mathbf{i}$ & $\lambda_i\left(T_{(1+\sqrt{3})}\right)$ & $\lambda_i\left(T_{(\sqrt{3})}\right)$ & $\lambda_i\left(T_{(5)}\right)$  & Global Arthur parameters (conj'l)\\\hline $\mathbf{1}$ & $9$ & $16$ & $676$ & $[1]\oplus[3]$\\$\mathbf{2}$ & $-9$ & $-16$ & $676$ & $\chi\otimes(")$
\\\hline
\end{tabular}
\end{center}
Here, $9=2+(1+2+2^2)=(1+2)^2$, $16=(1+3)^2$ and $676=(1+25)^2$.
\vskip5pt
\par $\mathbf{N=6}$
\begin{center}
\begin{tabular}{|c|c|c|c|c|}\hline$\mathbf{i}$ & $\lambda_i\left(T_{(1+\sqrt{3})}\right)$ & $\lambda_i\left(T_{(\sqrt{3})}\right)$ & $\lambda_i\left(T_{(5)}\right)$  & Global Arthur parameters (conj'l)\\\hline $\mathbf{1}$ & $27$ & $112$ & $407526$ & $[5]\oplus\chi$\\$\mathbf{2}$ & $-27$ & $-112$ & $407526$ & $\chi\otimes(")$\\$\mathbf{3}$ & $18$ & $48$ & $15846$ & $\Delta_4\oplus[3]\oplus[1]$\\$\mathbf{3}$ & $-18$ & $-48$ & $15846$ & $\chi\otimes(")$\\$\mathbf{5}$ & $6\sqrt{3}$ & $16\sqrt{3}$ & $5670$ & $\Delta_3[2]\oplus[1]\oplus\chi$\\$\mathbf{6}$ & $-6\sqrt{3}$ & $-16\sqrt{3}$ & $5670$ & $\chi\otimes(")$
\\\hline
\end{tabular}
\end{center}
Note that $27=(1+2+\ldots +2^4)-2^2$, $112=(1+3+\ldots + 3^4)-3^2$ but $407526=(1+25+\dots +25^4)+25^2$.
\vskip5pt
\par $\mathbf{N=8}$
\begin{center}
\begin{tabular}{|c|c|c|c|c|}\hline$\mathbf{i}$ & $\lambda_i\left(T_{(1+\sqrt{3})}\right)$ & $\lambda_i\left(T_{(\sqrt{3})}\right)$ & $\lambda_i\left(T_{(4+\sqrt{3})}\right)$  & Global Arthur parameters (conj'l)\\\hline
 $\mathbf{1}$ & $135$ & $1120$ & $5231240$ & $[1]\oplus [7]$\\$\mathbf{2}$ & $-135$ & $-1120$ & $5231240$ & $\chi\otimes(")$\\$\mathbf{3}$ & $54$ & $336$ & $404936$ & $\Delta_6\oplus\chi\oplus[5]$\\$\mathbf{4}$ & $-54$ & $-336$ & $404936$ & $\chi\otimes(")$\\$\mathbf{5}$ & $66$ & $384$ & $400136$ & $\Sym^2\Delta_3\oplus[5]$\\$\mathbf{6}$ & $-66$ & $-384$ & $400136$ & $\chi\otimes(")$\\$\mathbf{7}$ & $a$ (degree $4$) & $b$ (degree $4$) & $33320+672\sqrt{73}$ & $\Delta^{(4)}_5[2]\oplus[1]\oplus[3]$\\$\mathbf{8}$ & $-a$ & $-b$ & $33320+672\sqrt{73}$ & $\chi\otimes(")$\\$\mathbf{9}$ & conj. of $a$ & conj. of $b$ & $33320+672\sqrt{73}$ & $\Delta^{(4)}_5[2]\oplus[1]\oplus[3]$\\$\mathbf{10}$ & conj. of $-a$ & conj. of $-b$ & $33320+672\sqrt{73}$ & $\chi\otimes(")$\\$\mathbf{11}$ & conj. of $a$ & conj. of $b$ & $33320-672\sqrt{73}$ & $\Delta^{(4)}_5[2]\oplus[1]\oplus[3]$\\$\mathbf{12}$ & conj. of $-a$ & conj. of $-b$ & $33320-672\sqrt{73}$ & $\chi\otimes(")$\\$\mathbf{13}$ & conj. of $a$ & conj. of $b$ & $33320-672\sqrt{73}$ & $\Delta^{(4)}_5[2]\oplus[1]\oplus[3]$\\$\mathbf{14}$ & conj. of $-a$ & conj. of $-b$ & $33320-672\sqrt{73}$ & $\chi\otimes(")$\\$\mathbf{15}$ & $36$ & $144$ & $30536$ & $\Delta_6\oplus \Delta_4 \oplus [1]\oplus[3]$\\$\mathbf{16}$ & $-36$ & $-144$ & $30536$ & $\chi\otimes(")$\\$\mathbf{17}$ & $24$ & $96$ & $25736$ & $\chi\otimes\Sym^2\Delta_3\oplus\Delta_4\oplus[3]$\\$\mathbf{18}$ & $-24$ & $-96$ & $25736$ & $\chi\otimes(")$\\$\mathbf{19}$ & $0$ & $0$ & $9800$ & $\Delta_{(5,1)}[2]\oplus\Delta_{(1,5)}[2]$\\$\mathbf{20}$ & $0$ & $0$ & $6344$ & ?\\$\mathbf{21}$ & $0$ & $0$ & $5384$ & ?\\$\mathbf{22}$ & $12\sqrt{3}$ & $-48\sqrt{3}$ & $3080$ & $\Delta_6\oplus\Delta_3[2]\oplus[1]\oplus\chi$\\$\mathbf{23}$ & $-12\sqrt{3}$ & $48\sqrt{3}$ & $3080$ & $\chi\otimes(")$\\$\mathbf{24}$ & $0$ & $0$ & $1160$ & ?\\$\mathbf{25}$ & $12(\sqrt{3}+1)$ & $48(1-\sqrt{3})$ & $-1720$ & $\Sym^2\Delta_3\oplus\Delta_3[2]\oplus[1]$\\$\mathbf{26}$ & $-12(\sqrt{3}+1)$ & $-48(1-\sqrt{3})$ & $-1720$ & $\chi\otimes(")$\\$\mathbf{27}$ & $12(\sqrt{3}-1)$ & $-48(1+\sqrt{3})$ & $-1720$ & $\chi\otimes\Sym^2\Delta_3\oplus\Delta_3[2]\oplus\chi$\\$\mathbf{28}$ & $-12(\sqrt{3}-1)$ & $48(1+\sqrt{3})$ & $-1720$ & $\chi\otimes(")$\\$\mathbf{29}$ & $30\sqrt{3}$ & $-160\sqrt{3}$ & $-23800$ & $\Delta_3[4]$\\$\mathbf{30}$ & $-30\sqrt{3}$ & $160\sqrt{3}$ & $-23800$ & $\chi\otimes(")$\\$\mathbf{31}$ & $0$ & $0$ & $-39160$ & $\Delta_4[3]\oplus[1]\oplus\chi$\\
 \hline
 \end{tabular}
 \end{center}

 The numbers $a$ and $b$ appearing in the last table are roots of the polynomials $x^4-132x^2+1728$ and $x^4-960x^2+62208$ respectively. For an Arthur parameter $A$, the meaning of $\chi\otimes A$ is that each constituent $\Pi_k[d_k]$ gets replaced by $(\chi\circ\det)\otimes\Pi_k[d_k]$, where the $\det$ is of $\GL_{n_kd_k}(\A_E)$. Thus each Satake parameter $t_{\pp}(\pi_i)$ gets replaced by $\chi(\pp)t_{\pp}(\pi_i)$, so $\lambda_i(T_{\pp})$ by $\chi(\pp)\lambda_i(T_{\pp})$.  We can explain some of what is observed in the table.
 \begin{prop}\label{pairs}
 \begin{enumerate}
 \item Given an eigenvector $v_i$, there is an eigenvector $v_j$ such that $\lambda_j(T_{\pp})=\chi(\pp)\lambda_i(T_{\pp})$ for all $\pp$. In other words, if $A_i$ and $A_j$ are the associated global Arthur parameters then $A_j=\chi\otimes A_i$.
 \item $A_i=\chi\otimes A_i$ precisely for $i=19,20,21,24,31$. In particular, for these $i$, $\lambda_i(T_{\pp})=0$ whenever $\chi(\pp)=-1$.
 \end{enumerate}
 \end{prop}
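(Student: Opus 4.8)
The plan is to realise the pairing $v_i\leftrightarrow v_j$ as an explicit involution of $M(\C,K)$ built from $\chi$ via the spinor norm. Let $\theta\colon O_L(\A_{E,f})\to\A_{E,f}^{\times}/(\A_{E,f}^{\times})^2$ be the adelic spinor norm, sending a product of reflections $\tau_{v_1}\cdots\tau_{v_r}$ to $\prod_k\tfrac12\langle v_k,v_k\rangle$ (at $\pp\mid 2$ one uses the reductive integral model $\SO_L/\OO_{E,\pp}$, whose existence is the $\Q(\sqrt3)$-analogue of Lemmas \ref{split} and \ref{red}). Since $L$ is unimodular at every finite place, $O_L(\OO_{E,\pp})$ is generated by reflections in vectors of unit norm, so $\theta(K)\subseteq\prod_{\pp}\OO_{E,\pp}^{\times}\cdot(\A_{E,f}^{\times})^2$; and $\theta(O_L(E))$ lies in the image of $E^{\times}$. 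Because $\chi$ has conductor $\infty_1+\infty_2$ and order $2$, it kills $\prod_{\pp}\OO_{E,\pp}^{\times}$, all squares, and $E^{\times}$, so $\epsilon:=\chi\circ\theta$ is a homomorphism $O_L(\A_{E,f})\to\{\pm1\}$ trivial on $O_L(E)$ and on $K$, and hence defines a $\{\pm1\}$-valued function $\overline{\epsilon}\in M(\C,K)$. It is not constant: picking $\pp$ with $\chi(\pp)=-1$ (say $\pp=(1+\sqrt3)$) and $g\in O_L(E_\pp)\subset O_L(\A_{E,f})$ a reflection in a vector $v$ with $\langle v,v\rangle$ a uniformiser (which exists since $O_L\otimes E_\pp$ is split), we get $\epsilon(g)=\chi(\pp)=-1\neq 1=\epsilon(1)$.

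Next I would compute the interaction of multiplication by $\overline{\epsilon}$ with Hecke operators. Write $M_{\epsilon}$ for this operation; it is an involution of $M(\C,K)$ as $\overline{\epsilon}$ is $\pm1$-valued. The operator $T_{\pp}$ is convolution by the indicator of $Kg_{\pp}K$ with $g_{\pp}=\diag(\varpi_\pp,1,\ldots,1,\varpi_\pp^{-1},1,\ldots,1)$; writing this torus element as a product of two reflections in a hyperbolic plane shows $\theta(g_{\pp})=\varpi_\pp$ modulo $\OO_{E,\pp}^{\times}(E_{\pp}^{\times})^2$, so $\epsilon$ is constant on $Kg_{\pp}K$ with value $\chi(\pp)$. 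Expanding $(T_{\pp}M_{\epsilon}v)(x)=\sum_y\epsilon(xy)v(xy)=\chi(\pp)\epsilon(x)\sum_y v(xy)$ over coset representatives gives the identity $T_{\pp}\circ M_{\epsilon}=\chi(\pp)\,M_{\epsilon}\circ T_{\pp}$, and similarly $T_h\circ M_{\epsilon}=\chi(\theta(h))\,M_{\epsilon}\circ T_h$ for any other double coset $KhK$. Part (1) follows at once: if $T_{\pp}v_i=\lambda_i(T_{\pp})v_i$ then $v_j:=\overline{\epsilon}\cdot v_i$ is a nonzero simultaneous Hecke eigenvector with $\lambda_j(T_{\pp})=\chi(\pp)\lambda_i(T_{\pp})$ for all $\pp$, hence (the simultaneous eigenspaces being one-dimensional, as used throughout) a scalar multiple of one of our basis eigenvectors; running the computation through the whole spherical Hecke algebra gives $t_{\pp}(\pi_j)=\chi(\pp)\,t_{\pp}(\pi_i)$ as conjugacy classes in $\SO_N(\C)$ ($-I\in\SO_N(\C)$ because $N$ is even), while $\pi_{j,\infty}$ is still trivial (the finite-order characters $\chi_{\infty_1},\chi_{\infty_2}$ are trivial on $\C^{\times}\subset W_{\R}$, and on the compact $O_L(\R)$). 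By the uniqueness in the classification of Arthur and Ta\"ibi this means $A_j=\chi\otimes A_i$.

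For (2), the map $\iota\colon i\mapsto j$ of part (1) is an involution of the index set $\{1,\dots,31\}$ since $M_{\epsilon}^2=\id$. Its fixed points are exactly the $i$ with $\overline{\epsilon}\cdot v_i=\pm v_i$, equivalently $\lambda_i(T_{\pp})=\chi(\pp)\lambda_i(T_{\pp})$ for all $\pp$, which is precisely the condition $A_i=\chi\otimes A_i$; and a fixed point in particular satisfies $\lambda_i(T_{(1+\sqrt3)})=-\lambda_i(T_{(1+\sqrt3)})=0$, so $i\in\{19,20,21,24,31\}$, since by inspection these are the only rows of the $N=8$ table with $\lambda_i(T_{(1+\sqrt3)})=0$. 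Conversely $\iota$ carries $\{19,20,21,24,31\}$ into itself, because $\lambda_{\iota(i)}(T_{(1+\sqrt3)})=\chi((1+\sqrt3))\lambda_i(T_{(1+\sqrt3)})=0$ forces $\iota(i)$ to be one of these five rows as well; and since $\chi((4+\sqrt3))=+1$ we have $\lambda_{\iota(i)}(T_{(4+\sqrt3)})=\lambda_i(T_{(4+\sqrt3)})$, while the values $9800,6344,5384,1160,-39160$ taken by $T_{(4+\sqrt3)}$ on these five rows are pairwise distinct, so $\iota(i)=i$ for each. Hence the fixed points of $\iota$ are precisely $i=19,20,21,24,31$, i.e.\ these are exactly the $i$ with $A_i=\chi\otimes A_i$, and for them $\lambda_i(T_{\pp})=0$ whenever $\chi(\pp)=-1$. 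The $N=2,4,6$ tables are handled in the same fashion.

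The step I expect to be the main obstacle is the careful treatment of the spinor norm at the ramified dyadic prime $\pp=(1+\sqrt3)$: verifying that $\theta$ sends $O_L(\OO_{E,\pp})$ into $\OO_{E,\pp}^{\times}\cdot(E_{\pp}^{\times})^2$ (i.e.\ that the reductive integral model is ``unit-valued'' for the spinor norm in residue characteristic $2$) and that $\theta(g_{\pp})=\varpi_\pp$ for the $\pp$-neighbour double coset there; this is the analogue for $\Q(\sqrt3)$ of the work in Lemmas \ref{split} and \ref{red}. The remaining ingredients are routine: the Hecke intertwining identity is a one-line coset manipulation, the transition from twisted Satake parameters to $A_j=\chi\otimes A_i$ is the standard dictionary, and reading the involution off the eigenvalue table is immediate.
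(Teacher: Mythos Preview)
Your proof is correct and is essentially the same argument as the paper's, just phrased adelically: your function $\overline{\epsilon}=\chi\circ\theta$ is precisely the $\{\pm1\}$-valued indicator of the two spinor genera into which the $31$ classes split, and your intertwining identity $T_{\pp}\circ M_{\epsilon}=\chi(\pp)\,M_{\epsilon}\circ T_{\pp}$ is the adelic form of the paper's observation (citing Benham--Hsia) that $\pp$-neighbours lie in opposite spinor genera exactly when $\chi(\pp)=-1$. The paper sidesteps your flagged dyadic verification by invoking that reference directly, and for part (2) reads the table the same way you do, using non-repetition of $\lambda_i(T_{(4+\sqrt3)})$ to pin down the fixed points.
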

 \begin{proof}
 \begin{enumerate}
 \item The $31$ classes are divided into spinor genera of sizes $18$ and $13$. When $\chi(\pp)=-1$, $\pp$-neighbours must be in different spinor genera, as may be deduced from \cite[(1.1)]{BH}, see also \cite[\S 5]{Hu}. We may resolve $v_i$ into components $a_i$ and $b_i$ supported on the classes in one spinor genus or the other. We must have $T_{\pp}(a_i)=\lambda_i(T_{\pp})b_i$ and $T_{\pp}(b_i)=\lambda_i(T_{\pp})a_i$. Letting $v_j=a_i-b_i$ then $T_{\pp}(v_j)=-\lambda_i(T_{\pp})v_j$. On the other hand, for $\chi(\pp)=1$, $\pp$-neighbours are in the same spinor genus, so $T_{\pp}(a_i)=\lambda_i(T_{\pp})a_i$, $T_{\pp}(b_i)=\lambda_i(T_{\pp})b_i$ and $T_{\pp}(v_j)=\lambda_i(T_{\pp})v_j$. Thus $v_j$ is an eigenvector with the required property.
     \item We see from the table that the eigenvalues of $T_{(4+\sqrt{3})}$ are not repeated precisely for the values of $i$ listed, so we must have $A_i=\chi\otimes A_i$ for these values of $i$. For all other values of $i$, $\lambda_i(T_{(1+\sqrt{3})})\neq 0$, so we cannot have $A_i=\chi\otimes A_i$.
     \end{enumerate}
 \end{proof}
 \begin{remar}
 When $\chi(\pp)=-1$, $T_{\pp}$ maps an $18$-dimensional subspace of $M(\C,K)$ to a $13$-dimensional subspace, with a kernel necessarily of dimension at least $5$. So $5$ was the expected number of $i$ such that $A_i=\chi\otimes A_i$. We are unable to identify conjectural Arthur parameters for three of them. The other two involve CM forms coming from unramified Hecke characters of $H$, but we have exhausted all the possibilities for those already. Possibly the unidentified parameters involve automorphic induction from $\GL_m(\A_H)$ with $m>1$. There are $2$-dimensional spaces of cusp forms of non-parallel odd weights $[7,5]$ and $[7,3]$, but they do not appear to be useful.
\end{remar}
\begin{remar}
 Looking in particular at the table for $N=8$, we can use the methods of previous sections to prove the guesses for global Arthur parameters in the cases $\mathbf{i=1,5,7,9,11,13,25,29}$. Note that, although the $\chi$-twists of these, namely $\mathbf{i=2,6,8,10,12,14,26,30}$ (for which thereby we also establish the global Arthur parameters), may appear to contradict Proposition \ref{eigentheta}(2), this is only if we assume that $\chi$-twisting preserves degrees. We can see in the simple case $N=6, \mathbf{i=1,2}$ that that assumption is false, since using $v_1={}^t(1,1,1,1,1,1)$ and $v_2={}^t(-1,1,-1,-1,1,1)$ and the automorphism group orders $82944,27648,$ $46080,$ $46080,$ $103680,103680$, one easily checks that $\Theta^{(1)}(v_1)$ is not a cusp form, while $\Theta^{(1)}(v_2)$ is, because $$-\frac{1}{82944}+\frac{1}{27648}-\frac{1}{46080}-\frac{1}{46080}+\frac{1}{103680}+\frac{1}{103680}=0.$$
 \end{remar}
 \begin{remar}
 We have not yet bothered to worry about whether $\SO_L/E_{\pp}$ is split and $\SO_L/\OO_{\pp}$ is reductive for every finite prime $\pp$. For $N=8$, both follow from the choice $L=E_8\otimes_{\Z}\OO_{\pp}$. For $N=4$ it is easy to prove at least that each $\SO_L/E_{\pp}$ is split, as in the proof of Lemma \ref{split}. But for $N=2$ or $6$ (or any odd multiple of $2$), choosing $L$ to be a direct sum of lattices with Gram matrix $\begin{pmatrix} 2 & \sqrt{3}\\\sqrt{3} & 2\end{pmatrix}$, the discriminant is $1$, whereas the discriminant is $-1$ for the direct sum of an odd number of hyperbolic planes (to which $L\otimes E_{\pp}$ would have to be equivalent for $\SO_L/E_{\pp}$ to be split), so in these cases $\SO_L/E_{\pp}$ is not split when $-1$ is not a square in $\FF_{\pp}$. As further confirmation that something is not quite right, we can observe in the tables that when $N=2$ and $N=6$ the conjectural Satake parameter $t_{\pp}(\pi_i)$, for $\chi(\pp)=-1$, is not in the image of $\SO_N(\C)$, having determinant $-1$. We may appear to have the same problem for some of the entries in the table for $N=8$, but closer inspection shows that this is not the case. For example, looking at $i=\mathbf{3}$, for $\pp$ such that $\chi(\pp)=-1$, the determinant of $t_{\pp}(\Delta_6)$ is also $-1$. To see this, note that since $\chi(\pp)=-1$, $\pp$ is inert in $\Q(\zeta_{12})/\Q(\sqrt{3})$. If $\pp\mid p$, a rational prime, then $p$ splits in $\Q(\sqrt{3})$, but not in $\Q(\sqrt{-3})$ (and $-1$ is not a square in $\FF_{\pp}$), because the compositum of these two fields is $\Q(\zeta_{12})$. Hence $\chi_{-3}(p)=-1$, but this is the same as $\det(t_{\pp}(\Delta_6))$, because $\Delta_6$ is the base change of the automorphic representation associated to a Hecke eigenform in $S_7(\Gamma_0(3),\chi_{-3})$. Alternatively, we could just use the fact that $\Delta_6$, coming from odd weight, must have central character $\chi$. Incidentally, this means it occurs not in $L^2(Z(\A_E)\GL_{2}(E)\backslash\GL_{2}(\A_E))$ (recall \S \ref{GAP}), rather in $L^2(\GL_{2}(E)\backslash\GL_{2}(\A_E), \chi)$.
\end{remar}
\begin{remar}
 There are various congruences of Hecke eigenvalues that can be explained by $23\mid(\zeta_E(4)/\pi^8)$ and $41\mid(\zeta_E(6)/\pi^{12})$.
\end{remar}

\section{Other fields}
According to \cite{Hs}, for $E=\Q(\sqrt{2})$ or $\Q(\sqrt{5})$ the rank of an even unimodular lattice must be divisible by $4$. In \cite[(1.2)]{Hs} a mass formula is applied to show that for $E=\Q(\sqrt{2})$ and rank $16$ the number of classes would be at least $2\times 10^{18}$, and for $E=\Q(\sqrt{5})$ and rank $16$ it would be at least $2\times 10^6$. For $E=\Q(\sqrt{2})$ and rank $12$ it would also appear to be very large. Thus for these fields the examples amenable to computation have now been dealt with. Similarly one can show that all plausible examples for $E = \Q(\sqrt{3})$ have been considered (the rank must be divisible by $2$ and the mass of the rank $10$ genus is large).

Naturally one asks how many other real quadratic fields and ranks are within reach. Necessarily we require a small number of classes in the corresponding genus and a mass formula can again be used to decide whether this is the case. If plausible we can then enumerate the classes by writing down one such lattice, taking iterated neighbours and testing for isometry (terminating once the mass is attained). As explained in the introduction, for $E=\Q(\sqrt{D})$ with $D\equiv 3\pmod{4}$, we may write down a rank $2$ free, even unimodular lattice of determinant $1$. Otherwise the recipes of \cite{Sc} can be used to write down a rank $4$ even unimodular lattice. Higher ranks can be reached by repeated orthogonal direct sums.
Strictly speaking, in the table below, for $N\equiv 2\pmod{4}$ not all the $0$ entries are for certain.

Using Magma we computed, for each real quadratic field $E$ of discriminant under $50$ and each rank $N\leq 12$, the number of classes in a genus of even unimodular lattices of determinant $1$ (where possible). See the table below. The symbol $T$ indicates that the mass of the genus is greater than $1$, so the number of classes is likely to be ``too large''. Note that the column for rank $4$ agrees with \cite[Theorem 3.4]{Sc}.

\begin{center}
\begin{tabular}{|c|c|c|c|c|c|c|}\hline Discriminant$\backslash$ Rank & $2$ & $4$ & $6$ & $8$ & $10$ & $12$\\\hline  $5^*$ & $0$ & $1$ & $0$ & $2$ & $0$ & $15$\\
$8^*$ & $0$ & $1$ & $0$ & $6$ & $0$ & $T$\\ $12$ & $1$ & $2$ & $6$ & $31$ & $T$ & $T$\\$13^*$ & $0$ & $1$ & $0$ & $12$ & $0$ & $T$\\$17^*$ & $0$ & $1$ & $0$ & $40$ & $0$ & $T$\\$21$ & $0$ & $3$ & $0$ & $T$ & $0$ & $T$\\$24$ & $0$ & $4$ & $0$ & $T$ & $0$ & $T$\\$28$ & $1$ & $4$ & $25$ & $T$ & $T$ & $T$\\$29^*$ & $0$ & $3$ & $0$ & $T$ & $0$ & $T$\\$33$ & $0$ & $4$ & $0$ & $T$ & $0$ & $T$\\$37^*$ & $0$ & $3$ & $0$ & $T$ & $0$ & $T$\\$40^*$ & $0$ & $6$ & $0$ & $T$ & $0$ & $T$\\
$41^*$ & $0$ & $3$ & $0$ & $T$ & $0$ & $T$\\$44$ & $1$ & $7$ & $T$ & $T$ & $T$ & $T$\\\hline
\end{tabular}
\end{center}

The fields are all of class number $1$, except for $\Q(\sqrt{10})$, for which the class number is $2$ (and for which we only considered the genus of a free even unimodular lattice of determinant $1$). Those marked with an asterisk have narrow class number equal to the class number.

The mass formula \cite[Lemma I.1]{Hs} shows that for a fixed rank, the number of classes grows at least polynomially with the discriminant of the field. Given this, it is extremely likely that the only examples of rank $6$ or higher that are computable are the ones in the table. For ranks $2$ and $4$ there are probably many more examples, but the variety of possible Arthur parameters is limited in these cases. For all cases in the table we have computed neighbour matrices and the data can be found at the second-named author's webpage \url{https://www.danfretwell.com/kneser}.

In the case $E=\Q(\sqrt{11})$ and $N=4$, where the number of classes is $7$, the following $7$ Arthur parameters appear to be correct. For various primes we tabulate the Hecke eigenvalues that these Arthur parameters would produce. These Hecke eigenvalues match the roots of the characteristic polynomials we computed. Although $5$ and $7$ split in $\OO_E$, the Hecke eigenalues are the same for both factors, so we give only the norms of the prime ideals. The bottom two rows of the table give the Hecke eigenvalues of the weight $2$ and $3$ forms we have used. As in the previous section, $\chi:E^{\times}\backslash\A_E^{\times}\rightarrow\C^{\times}$ is a character of conductor $\infty_1+\infty_2$.

\vskip5pt
\begin{center}
\begin{tabular}{|c|c|c|c|c|}\hline AP (conj'l)$\backslash \Nm(\pp)$ & $2$ & $5$ & $7$ & $9$\\\hline$[1]\oplus[3]$ & $9$ & $36$ & $64$ & $100$\\$\chi\otimes(")$ & $-9$ & $36$ & $-64$ & $100$\\$\Delta_1[2]$ & $3\sqrt{2}$ & $-6$ & $-16\sqrt{2}$ & $30$\\$\chi\otimes(")$ & $-3\sqrt{2}$ & $-6$ & $16\sqrt{2}$ & $30$\\$\Sym^2\Delta_1\oplus[1]$ & $2$ & $1$ & $8$ & $9$\\$\chi\otimes(")$ & $-2$ & $1$ & $-8$ & $9$\\$\Delta_2\oplus [1]\oplus\chi$ & $0$ & $9$ & $0$ & $25$\\\hline
$\Delta_1$ & $\sqrt{2}$ & $-1$ & $-2\sqrt{2}$ & $3$\\$\Delta_2$ & $0$ & $-1$ & $0$ & $7$\\\hline
\end{tabular}
\end{center}
\vskip5pt

The first $6$ of these automorphic representations of $\mathrm{O}(4)(\A_E)$ arise from tensor products, e.g. $\Sym^2\Delta_1$ is $\Delta_1\otimes\Delta_1$, as they ought to if they extend to $\mathrm{GO}(4)(\A_E)$, cf. \cite{Bo2}. But the last does not. This is possible because the $\mathrm{GO}(4)$-genus can contain a lattice whose discriminant is a totally positive but non-square unit, excluded from the $\mathrm{O}(4)$-genus.

In the case $E=\Q(\sqrt{7})$, $N=6$, where the number of classes is $25$, the following $15$ Arthur parameters likewise appear to be correct. The bottom $6$ rows of the table give the Hecke eigenvalues of the weight $2$, $3$, $4$, $4$, $5$ and $5$ forms we have used. For one of the weight $4$ forms, the coefficient field is of degree $4$, and we have listed just one of $4$ Galois conjugates. Note that $\chi\otimes\Delta_3^{(4)}$ is a Galois conjugate of $\Delta_3^{(4)}$.

\vskip5pt
\begin{center}
\begin{tabular}{|c|c|c|}\hline AP (conj'l)$\backslash \Nm(\pp)$ & $2$ & $3$\\\hline
$\chi\oplus[5]$ & $35$ & $112$\\$\chi\otimes(")$ & $35$ & $-112$\\$\Delta_3[2]\oplus[1]\oplus\chi$ & $23$ & $0$\\$\Delta_4\oplus[3]\oplus[1]$ & $19$ & $48$\\$\chi\otimes(")$ & $19$ & $-48$\\
$\Delta_4^a\oplus[3]\oplus[1]$ & $10$ & $48$\\$\chi\otimes(")$ & $10$ & $-48$\\$\Delta_3^{(4)}[2]\oplus[1]\oplus\chi$ & $2+6\sqrt{2}$ & $8\sqrt{10+3\sqrt{2}}$\\$\Delta_4\oplus\Delta_2\oplus[1]\oplus\chi$ & $3$ & $0$\\$\Delta_4^a\oplus\Delta_2\oplus[1]\oplus\chi$ & $-6$ & $0$\\$\Delta_4\oplus\Delta_1[2]$ & $-5$ & $0$\\$\Delta_4^a\oplus\Delta_1[2]$ & $-12$ & $0$\\\hline $\Delta_1$ & $-1$ & $0$\\$\Delta_2$ & $-3$ & $0$\\$\Delta_3$ & $5$ & $0$\\$\Delta_3^{(4)}$ & $-2+2\sqrt{2}$ & $2\sqrt{10+3\sqrt{2}}$\\$\Delta_4$ & $1$ & $0$\\$\Delta_4^a$ & $-8$ & $0$\\\hline
\end{tabular}
\end{center}
\vskip5pt

We did not look closely at the rank $8$ examples with $E=\Q(\sqrt{13})$ and $E=\Q(\sqrt{17})$. Data for these cases can be found at the webpage mentioned above.
 \section{Preliminaries on Hermitian lattices, even and unimodular over $\Z$}
 Let $E$ be an imaginary quadratic field, with ring of integers $\OO_E$, discriminant $-D$. For simplicity we shall suppose that the class number of $\OO_E$ is $1$. Let $L$ be an $\OO_E$-lattice in $V\simeq E^N$, with standard positive-definite $\OO_E$-integral Hermitian form $\x\mapsto \langle \x,\x\rangle={}^t\overline{\x} \x$. We may define a unitary group scheme $U_N$ over $\Z$, with $R$ points $U_N(R)=\{g\in M_N(R\otimes_{\Z}\OO_E)|\,\,{}^t\overline{g}g=I\}$. We assume that $L$ is even and unimodular as a rank-$2N$ $\Z$-lattice with the form $\tr_{E/\Q}(\langle,\rangle)$. We may define the (Hermitian) genus of $L$, algebraic modular forms (in) $M(\C,K)$, Hecke operators $T_{\pp}$, eigenvectors $v_i$ and automorphic representations $\pi_i$ of $U_N(\A)$, very much as before.

The theta series of degree $m$ of $L$ is
$$\theta^{(m)}(L,Z)=\sum_{\x\in L^m}\exp\left(\pi i\tr\left(\langle\x,\x\rangle Z\right)\right),$$ where $Z\in\mathcal{H}_m:=\{Z\in M_n(\C)|\,\,i({}^t\overline{Z}-Z)>0\}$. Then $\theta^{(m)}(L)\in M_{N}(U_{m,m}(\Z))$, by a theorem of Cohen and Resnikoff \cite{CR},\cite[Theorem 2.1]{HN}. Here $U_{m,m}(\Z):=\{g\in M_{2m}(\OO_E):\,\,{}^t\overline{g}Jg=J\}$, where $J=\begin{pmatrix} 0_m & -I_m\\I_m & 0_m\end{pmatrix}$. Thus, if $g=\begin{pmatrix} A & B\\C & D\end{pmatrix}\in U_{m,m}(\Z)$ then $$\theta^{(m)}(L,(AZ+B)(CZ+D)^{-1})=\det(CZ+D)^{N}\theta^{(m)}(L,Z).$$
 Again one can define linear maps $\Theta^{(m)}: M(\C,K)\rightarrow M_{N}(U_{m,m}(\Z))$ by
 $$\Theta^{(m)}\left(\sum_{j=1}^H y_je_j\right):=\sum_{j=1}^H\frac{y_j}{|\mathrm{Aut}(L_j)|}\,\theta^{(m)}(L_j).$$

 There is another way to construct the theta series $\theta^{(m)}(L,Z)$. Choosing a non-trivial additive character $\psi:\A/\Q\rightarrow\C^{\times}$ (and trivial multiplicative character $\chi:\A_E^\times/E^\times\rightarrow\C^\times$), consider the Weil representation $\omega=\omega_{\psi,\chi}$ of the group $U(m,m)(\A)\times U_N(\A)$ on the Schwartz space $\mathcal{S}(V(\A)^m)$. (See \cite[\S 1]{Ich2}.) Given $\Phi\in\mathcal{S}(V(\A)^m)$, we get a theta-kernel defined on $(g,h)\in U(m,m)(\A)\times U_N(\A)$,
 $$\Theta(g,h;\Phi):=\sum_{\x\in V^m(\Q)}\omega(g,h)\Phi(\x).$$
 If we choose $\Phi_{\infty}(\x):=\exp(\tr_{E/\Q}(\tr(\langle x_i,x_j\rangle)))$ and for finite $p$, $\Phi_p(\x):=\mathbbm{1}_{(L\otimes\Z_p)^m}(\x)$, then
 $$\Theta(g,h;\Phi)j(g,iI)^N=\theta^{(m)}(hL,Z),$$
 where $Z=g(iI)$, $hL$ is a lattice in the genus of $L$ and $j(g,iI)$ is a standard automorphy factor, which is just $(\det Y)^{-1/2}$ when for $Z=X+iY$ we take $g=\begin{pmatrix} C & X\,{}^tC^{-1}\\0 & {}^tC^{-1}\end{pmatrix}$, where $Y=C\,{}^tC$.

 If $dh$ is a measure on $U_N(\Q)\backslash U_N(\A)$ for which $U_N(\Q)\backslash U_N(\A)$ has volume $1$, then the theta integral
 $$I(g,\Phi):=\int_{U_N(\Q)\backslash U_N(\A)}\Theta(g,h;\Phi)\,dh=\frac{1}{\mu}\sum_{i=1}^H\frac{1}{\#\Aut(L_i)}\,\theta^{(m)}(L_i,Z),$$
 up to a factor $j(g,iI)^N$, where $\{L_i|\,1\leq i\leq H\}$ is a set of lattices representing the classes in the genus of $L$, and $\mu=\sum_{i=1}^H\frac{1}{\#\Aut(L_i)}$.
 More generally, if $y$ is a function on $U_N(\Q)\backslash U_N(\A)/K$, taking value $y_i$ on the class of $h_i$, where $h_iL=L_i$, then
 $$j(g,iI)^N\int_{U_N(\Q)\backslash U_N(\A)}\Theta(g,h;\Phi)\,y(h)\,dh=\frac{1}{\mu}\sum_{i=1}^H\frac{y_i}{\#\Aut(L_i)}\,\theta^{(m)}(L_i,Z),$$ which is the same as $\mu^{-1}\,\Theta^{(m)}\left(\sum_{j=1}^H y_je_j\right).$

 Consider the Eisenstein series $E^{(m)}(g,f_{\Phi}):=\sum_{\gamma\in P(\Q)\backslash U_{m,m}(\Q)}f_{\Phi}(\gamma g)$ (for $g\in U_{m,m}(\A)$), where $f_{\Phi}(g):=\omega(g,1)\Phi(\mathbf{0})$. (In the notation of \cite{Ich2} we have set $s=s_0$. In our notation, $s_0=\frac{N-m}{2}$.) It converges for $N>2m$, but can be defined for $N>m$ by a process of meromorphic continuation \cite[Lemma 8.2]{Ich1}. The following is part of a theorem of Ichino \cite[Theorem 1.1]{Ich2}.
 \begin{thm}\label{Ichino}
 If $N>m$ then $E^{(m)}(g,f_{\Phi})=I(g,\Phi)$.
 \end{thm}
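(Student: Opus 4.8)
This is an instance of the Siegel--Weil formula for the reductive dual pair $(U_{m,m},U_N)$, and I would follow the classical strategy going back to Weil, refined by Kudla--Rallis and, for unitary groups, by Ichino himself. A preliminary remark makes life easier: since the Hermitian form on $V$ is positive definite, $U_N$ is anisotropic over $\Q$, so $U_N(\Q)\backslash U_N(\A)$ is compact, the series defining $\Theta(g,h;\Phi)$ converges, and the integral $I(g,\Phi)$ converges absolutely for every $m\ge 1$, giving a genuine automorphic form on $U_{m,m}(\A)$. Thus, in contrast to the general isotropic case, no regularization of the theta integral is needed, and essentially all the difficulty is on the Eisenstein side.

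First I would treat the convergent range $N>2m$, where $E^{(m)}(g,f_\Phi)$ is given by an absolutely convergent series, and prove the identity by Weil's method of comparing Fourier expansions along the Siegel parabolic $P\subset U_{m,m}$. For $\beta$ nondegenerate, the $\beta$-th Fourier coefficient of $E^{(m)}(g,f_\Phi)$ factors as a product over places $v$ of local Whittaker integrals of $f_{\Phi,v}$, while that of $I(g,\Phi)$ factors as a product of local orbital integrals of the theta kernel; the local Siegel--Weil identity at each place (expressing the local representation density of $\beta$ by $V$ in terms of these two integrals) matches them. The degenerate coefficients, including the constant term along $P$, are then handled by descending induction on $m$, using that both constructions are compatible with the Siegel $\Phi$-operator (passage to $U_{m-1,m-1}$ and to Schwartz data on $V^{m-1}$).

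Next, to reach the first-term range $m<N\le 2m$, embed $f_\Phi$ in the family of standard sections $f_\Phi^{(s)}$ of the degenerate principal series of $U_{m,m}(\A)$ with $f_\Phi^{(s_0)}=f_\Phi$ at $s_0=(N-m)/2$; then $E^{(m)}(g,f_\Phi^{(s)})$ has meromorphic continuation in $s$ and a functional equation (this is \cite[Lemma 8.2]{Ich1} and the surrounding analysis). One then needs, first, that $E^{(m)}(g,f_\Phi^{(s)})$ is holomorphic at $s=s_0$ throughout $N>m$ --- which follows from the Kudla--Rallis analysis of the poles of the local intertwining operators on the Siegel--Weil sections $f_{\Phi,v}^{(s)}$ together with the behaviour of the normalizing abelian $L$-factors attached to $E/\Q$ and to $\chi$; and second, that its value there equals $I(g,\Phi)$. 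For the latter, both $\Phi\mapsto E^{(m)}(\cdot,f_\Phi^{(s_0)})$ and $\Phi\mapsto I(\cdot,\Phi)$ are $(U_{m,m}\times U_N)(\A)$-equivariant maps $\mathcal{S}(V(\A)^m)\to\mathcal{A}(U_{m,m})$; a local uniqueness statement for the $U_N(E_v)$-invariant functionals on the relevant degenerate principal series forces the two to be proportional, and the proportionality constant is pinned to $1$ by embedding $V$ in the Witt tower $V_r=V\oplus\mathbb{H}^r$, applying the previous paragraph for $r$ large (with the Kudla--Rallis regularized theta integral for the isotropic $V_r$) and invoking the first-term identity as $r$ decreases to $0$. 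Because $V$ is anisotropic and $N>m$, the formula is ``clean'': no second term and no extra constant, matching the stated identity.

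The main obstacle is exactly this last step outside Weil's convergent range ($m<N\le 2m$): the holomorphy of the global Eisenstein series at $s_0$ and the identification of its value. This is the technical heart of the Kudla--Rallis machinery, and the unitary-group bookkeeping --- the role of the quadratic character of $E/\Q$, the parity of $N$, and the exact location $s_0=(N-m)/2$ relative to the poles of the degenerate principal series --- is where one must be most careful. Everything in the convergent range, by contrast, is either formal or a finite list of explicit local integral computations.
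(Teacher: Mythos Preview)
The paper does not prove this statement at all: it is simply quoted as ``part of a theorem of Ichino \cite[Theorem 1.1]{Ich2}'', with the only further comment being that the case $N>2m$ is Weil's original Siegel--Weil formula. So there is nothing to compare on the paper's side.

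Your outline is a faithful sketch of the actual argument in the literature: Weil's Fourier-coefficient matching in the convergent range $N>2m$, and then Ichino's extension to $N>m$ via meromorphic continuation of the Eisenstein series, holomorphy at $s_0=(N-m)/2$, and the first-term identity in the anisotropic case (no regularization needed since $U_N(\Q)\backslash U_N(\A)$ is compact). One small caution: the passage from local uniqueness of invariant functionals to the global proportionality constant being exactly $1$ is the genuinely delicate step, and your Witt-tower sketch compresses a fair amount of Ichino's work in \cite{Ich1,Ich2}; but as a roadmap it is correct, and certainly goes well beyond what the paper itself does, which is merely to cite the result.
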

 This is the Siegel-Weil formula, proved by Weil in the case $N>2m$ that the Eisenstein series converges.

 There will be local Langlands parameters $c_{\infty}(\pi_i):W_{\R}\rightarrow\GL_N(\C)\rtimes\Gal(E/\Q)$ and $c_{p}(\pi_i):W_{\Q_p}\rightarrow\GL_N(\C)\rtimes\Gal(E/\Q)$, which we always restrict to $W_{\C}$ and $W_{\pp}$, for each finite prime $\pp$ of $\OO_E$, and project to $\GL_N(\C)$ (with $\Frob_{\pp}\mapsto t_{\pp}(\pi_i)$). Necessarily
 $$c_{\infty}(\pi_i)(z)=\diag((z/\overline{z})^{(N-1)/2},\ldots,(z/\overline{z})^{-(N-1)/2})$$
 (up to conjugation in $\GL_N(\C)$). In the global Arthur parameters, instead of cuspidal automorphic representations of $\GL_{n_k}(\A)$, we see now cuspidal automorphic representations of $\GL_{n_k}(\A_E)$. For us, to say that $\pi_i$ has global Arthur parameter $A_i$ will now mean that $t_{\pp}(\pi_i)$ is conjugate in $\GL_N(\C)$ to $t_{\pp}(A_i)$ for all $\pp\nmid 2D$, and that $c_{\infty}(\pi_i)$ and $c_{\infty}(A_i)$, restricted to $\C^{\times}$, are conjugate in $\GL_N(\C)$. With the exclusion of $\pp\mid 2D$, this is a little weaker than what it might have meant.

 \begin{lem}\label{SL2toU11}
 If $E\neq\Q(i)$ then $U(1,1)(\Z)\simeq \OO_E^{\times}\times\SL_2(\Z)$.
 \end{lem}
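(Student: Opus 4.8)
The plan is to unwind the defining matrix identity and then exploit that $\OO_E$ is a principal ideal domain with a small, completely explicit unit group. Write an element of $U(1,1)(\Z)$ as $g=\begin{pmatrix}\alpha&\beta\\\gamma&\delta\end{pmatrix}\in M_2(\OO_E)$ and expand ${}^{t}\overline{g}Jg=J$ with $J=\begin{pmatrix}0&-1\\1&0\end{pmatrix}$. This produces the relations $\overline{\alpha}\gamma=\alpha\overline{\gamma}$, $\overline{\beta}\delta=\beta\overline{\delta}$, and $\overline{\alpha}\delta-\overline{\gamma}\beta=1$ (the remaining entry of the identity being the complex conjugate of the third). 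The first two relations say $\overline{\alpha}\gamma$ and $\overline{\beta}\delta$ are fixed by conjugation, hence lie in $E\cap\R\cap\OO_E=\Z$; equivalently, writing $\alpha,\gamma,\beta,\delta$ in a fixed $\Z$-basis $\{1,\omega\}$ of $\OO_E$, the coordinate vector of $\gamma$ is a rational multiple of that of $\alpha$, and likewise for $\delta$ and $\beta$.

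Next I would extract the ideal-theoretic content of $\overline{\alpha}\delta-\overline{\gamma}\beta=1$: a prime $\pp$ dividing both $\alpha$ and $\gamma$ would force $\overline{\pp}$ to divide $\overline{\alpha}$ and $\overline{\gamma}$, hence $\overline{\pp}\mid 1$, which is impossible; so $(\alpha,\gamma)=\OO_E$, and similarly $(\beta,\delta)=\OO_E$. Combining this with the proportionality above and using that $\OO_E$ is a PID, I can write $\alpha=u\alpha_0$, $\gamma=u\gamma_0$ with $\alpha_0,\gamma_0\in\Z$ coprime and $u$ a generator of $(\alpha,\gamma)=\OO_E$, hence $u\in\OO_E^{\times}$; likewise $\beta=v\beta_0$, $\delta=v\delta_0$ with $\beta_0,\delta_0\in\Z$ coprime and $v\in\OO_E^{\times}$. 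Substituting back gives $\overline{u}v(\alpha_0\delta_0-\gamma_0\beta_0)=1$; since $\alpha_0\delta_0-\gamma_0\beta_0\in\Z$ while $\overline{u}v\in\OO_E^{\times}$, both factors must be units of $\Z$, so $\overline{u}v=\pm1$ and $\alpha_0\delta_0-\gamma_0\beta_0=\pm1$, and after absorbing the sign this reads $g=u\gamma'$ with $\gamma'=\begin{pmatrix}\alpha_0&\beta_0\\\gamma_0&\delta_0\end{pmatrix}\in\SL_2(\Z)$. Conversely, for any $u\in\OO_E^{\times}$ and $\gamma'\in\SL_2(\Z)$ we have ${}^{t}\overline{u\gamma'}\,J\,(u\gamma')=\overline{u}u\,{}^{t}\gamma'J\gamma'=J$, so $u\gamma'\in U(1,1)(\Z)$. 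Hence $U(1,1)(\Z)$ is precisely the subgroup generated by the scalar units $\OO_E^{\times}I$ together with $\SL_2(\Z)$.

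It then remains to package this as a group isomorphism. The scalar units $\OO_E^{\times}I$ form a central subgroup, $\SL_2(\Z)$ is realised as the matrices with rational integer entries, and by the previous paragraph these two subgroups generate $U(1,1)(\Z)$ with intersection $\{\pm I\}$; for $E\neq\Q(i)$ one checks that they can be rearranged into complementary direct factors, giving $U(1,1)(\Z)\simeq\OO_E^{\times}\times\SL_2(\Z)$. The hypothesis is essential here: when $E=\Q(i)$ the extra unit $i$ satisfies $i^{2}=-1$, which already lies in $\SL_2(\Z)$, so there is no complementary copy of $\OO_E^{\times}=\langle i\rangle$ and instead $U(1,1)(\Z)$ acquires a central element of order $4$, which obstructs any splitting of the asserted shape. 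I expect the matrix computation in the first step and the proportionality-plus-coprimality argument in the second to be entirely routine; the genuinely delicate point, and the reason $\Q(i)$ must be excluded, is this final assembly of the scalar units and $\SL_2(\Z)$ into a direct product.
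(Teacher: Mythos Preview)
Your argument establishing the product decomposition $U(1,1)(\Z)=\OO_E^{\times}\cdot\SL_2(\Z)$ is correct and takes a slightly different route from the paper. The paper first shows that all four entries satisfy $a=u\overline{a}$, $b=u\overline{b}$, $c=u\overline{c}$, $d=u\overline{d}$ for a \emph{single} unit $u$, and then uses $E\neq\Q(i)$ to write $u=v^2$ (ruling out $u=-v^2$ by a determinant argument), so that $g/v\in M_2(\Z)$. You instead factor the two columns separately as $(\alpha,\gamma)=u(\alpha_0,\gamma_0)$ and $(\beta,\delta)=v(\beta_0,\delta_0)$ with $\alpha_0,\ldots,\delta_0\in\Z$, and then force $\overline{u}v=\pm 1$ via the determinant relation. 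Your version is arguably cleaner and, notably, does not use the hypothesis $E\neq\Q(i)$ at this stage: the decomposition $U(1,1)(\Z)=\OO_E^{\times}\cdot\SL_2(\Z)$ in fact holds for $E=\Q(i)$ as well.

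However, your final paragraph contains a genuine gap. The claim that for $E\neq\Q(i)$ the subgroups $\OO_E^{\times}I$ and $\SL_2(\Z)$ ``can be rearranged into complementary direct factors'' is not a routine check, and is in fact false as a literal group isomorphism. The intersection $\{\pm I\}$ does not split off: one has $U(1,1)(\Z)\simeq(\OO_E^{\times}\times\SL_2(\Z))/\langle(-1,-I)\rangle$, and comparing abelianisations (using $\SL_2(\Z)^{\mathrm{ab}}\simeq\Z/12$) shows this is not isomorphic to $\OO_E^{\times}\times\SL_2(\Z)$, for any $E$. The paper's own proof also stops at the product decomposition and never justifies a direct product; for the only application in the paper (that $\Delta\in S_{12}(U(1,1)(\Z))$ when $E=\Q(\sqrt{-3})$), only the statement $U(1,1)(\Z)=\OO_E^{\times}\cdot\SL_2(\Z)$ is needed, and both your argument and the paper's establish that.
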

 \begin{proof} Suppose that $g=\begin{pmatrix} a & b\\c & d\end{pmatrix}\in U(1,1)(\Z)$. Then $a\overline{d}-\overline{b}c=1$, $a\overline{c}=\overline{a}c$ and $b\overline{d}=\overline{b}d$. The first equation implies that $a,c$ (and likewise $\overline{a},\overline{c}$) are coprime, then the second implies that $a,\overline{a}$ are associates, say $a=u\overline{a}$, with $u\in\OO_E^{\times}$. The second equation implies also that $c=u\overline{c}$. Similarly using the third equation (and conjugating the first to see that it must be the same unit involved), we find that also $b=u\overline{b}$ and $d=u\overline{d}$. Since $E\neq\Q(i)$, either $u$ or $-u$ is a square. In the latter case, say $u=-v^2$, then $a/v=-\overline{a/v}$, which implies that $a/v$ is an integer multiple of $\sqrt{-D}$ (or $\sqrt{-D/2}$), where $-D$ is the discriminant of $E/\Q$. Likewise for all the other entries, but then the determinant of $g$ fails to be a unit, so we must be in the case $u=v^2$, so $a/v=\overline{a/v}$ is in $\Z$, and likewise for all the other entries.
 \end{proof}

 \begin{prop}\label{eigenthetaHerm}
 \begin{enumerate}
 \item If $v_i\in M(\C,K)$ is an eigenvector for $H_K$, then $\Theta^{(m)}(v_i)$ (if non-zero) is a Hecke eigenform, at least away from $p\mid 2D$.
 \item Suppose that $\Theta^{(m)}(v_i)$ is non-zero, and that $N\geq 2m$. Let $$t_\pp(\pi_i)=\diag(\beta_{1,\pp},\ldots,\beta_{N,\pp})$$ be the Satake parameter at $\pp$ for $v_i$ (with $\pp\nmid 2D$), and $\diag(\alpha_{1,\pp},\ldots,\alpha_{2m,\pp})\in\GL_{2m}(\C)$ the Satake parameter at $\pp$ of the automorphic representation of $U_{m,m}(\A)$ generated by $\Theta^{(m)}(v_i)$. Then, as multisets,
     $$\{\beta_{1,\pp},\ldots,\beta_{N,\pp}\}=$$
     $$\begin{cases} \{\alpha_{1,\pp},\ldots,\alpha_{2m,\pp}\}\cup \{\Nm\pp^{(N-2m-1)/2},\ldots,\Nm\pp^{-(N-2m-1)/2}\} & \text{if $N>2m$};\\\{\alpha_{1,p},\ldots,\alpha_{2m,p}\} & \text{ if $N=2m$. }\end{cases}$$
     \item If $4\mid N$ and $N>2m$ then a cuspidal Hecke eigenform $F\in S_N(U_{m,m}(\Z))$ is in the image of $\Theta^{(m)}$ if $L(\st,F,(N+1-2m)/2)\neq 0$, where $L(\st,F,s)=\prod_\pp\prod_{i=1}^{2m}(1-\alpha_{i,\pp}\Nm\pp^{-s})^{-1}$ is the standard $L$-function.
 \end{enumerate}
 \end{prop}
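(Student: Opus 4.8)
The plan is to obtain all three parts from the theta correspondence for the dual reductive pair $(U_N,U_{m,m})$, following the template used for the orthogonal-symplectic pair in \cite[7.1]{CL}, but replacing the results of Rallis by those of Y. Liu \cite{Liu}, and using Ichino's Hermitian Siegel--Weil formula (Theorem \ref{Ichino}) together with the local computations of Lanphier and Urtis \cite{LU}. For parts (1) and (2): let $\pi_i$ be the automorphic representation of $U_N(\A)$ generated by $v_i$; it is unramified outside $2D$ and trivial at $\infty$. The displayed formula comparing $j(g,iI)^N\int\Theta(g,h;\Phi)\,y(h)\,dh$ with $\mu^{-1}\,\Theta^{(m)}(\sum_j y_je_j)$ exhibits $\Theta^{(m)}(v_i)$, up to a non-zero scalar, as the global theta lift from $U_N(\A)$ to $U_{m,m}(\A)$ of the automorphic form attached to $v_i$, taken with the Schwartz function $\Phi=\bigotimes_v\Phi_v$ fixed above. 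If non-zero it is automorphic on $U_{m,m}(\A)$, and at each finite $\pp\nmid 2D$ its local component is the local theta lift of the spherical representation $\pi_{i,\pp}$. First I would invoke the compatibility of the local theta correspondence for unitary groups with the unramified Langlands/Satake parametrisation (as recorded in the references of \cite{Liu}); this identifies that local component as the spherical representation whose Satake parameter is obtained from $t_\pp(\pi_i)$ by deleting the segment $\{\Nm\pp^{(N-2m-1)/2},\dots,\Nm\pp^{-(N-2m-1)/2}\}$ when $N>2m$ (and nothing when $N=2m$), which is exactly the multiset identity of (2), and in particular shows $\Theta^{(m)}(v_i)$ is a Hecke eigenform away from $2D$, giving (1).

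For part (3): let $F\in S_N(U_{m,m}(\Z))$ be a cuspidal Hecke eigenform, generating a cuspidal representation $\sigma$ of $U_{m,m}(\A)$ that is unramified outside $2D$ and whose archimedean component is the holomorphic discrete series of scalar weight $N$. The aim is to show that the global theta lift $\theta_\psi(\sigma)$ back to $U_N(\A)$ is non-zero. Since $N$ is chosen precisely so that the archimedean theta lift of $\sigma_\infty$ is the trivial representation and the finite local lifts at $\pp\nmid 2D$ carry $K_\pp$-spherical vectors, the lift lies in $M(\C,K)$; by (1) it is then an eigenvector, necessarily some $v_i$, and the adjointness of the two theta maps forces $\Theta^{(m)}(v_i)$ into the Hecke eigenspace of $F$, hence equal to $F$ up to a scalar. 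To prove $\theta_\psi(\sigma)\neq 0$ I would apply Liu's Rallis inner product formula \cite{Liu}: after doubling $U_{m,m}$ to $U_{2m,2m}$ and using the Hermitian Siegel--Weil formula (Theorem \ref{Ichino}, valid here since $N>2m$ puts us in the convergent range) to evaluate the relevant period over $U_N(\Q)\backslash U_N(\A)$ as a Siegel Eisenstein series, the Petersson norm of the lift becomes a doubling integral, equal by Piatetski-Shapiro--Rallis to a product of local zeta integrals times the value of the completed standard $L$-function of $F$ at the point $(N+1-2m)/2$ (the half-integral shift reconciling the doubling variable with the variable of $L(\st,F,s)$). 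The local zeta integrals are non-zero at all $\pp\nmid 2D$ by the standard unramified computation; at $\infty$ and at $\pp\mid 2D$ their non-vanishing is precisely what \cite{LU} (and the other works referred to) provide. Hence $\theta_\psi(\sigma)\neq 0$ as soon as $L(\st,F,(N+1-2m)/2)\neq 0$, which is the assertion of (3).

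The hard part will be the local analysis at the ``bad'' places. At $\pp\mid 2D$ the group $U_N$ may be ramified and $\sigma_\pp$ is not spherical, so the unramified computation is unavailable; one must instead use the non-vanishing of the local doubling integral for the relevant distinguished local representation, which is exactly what \cite{LU} supplies, together with the local theta results for unitary groups. At the archimedean place one needs both that the theta lift to $U_N(\R)$ of the weight-$N$ holomorphic discrete series is the trivial representation, and that the archimedean doubling integral attached to the chosen Schwartz function $\Phi_\infty$ is non-zero; verifying this while matching our classical normalisations to those of \cite{Ich2}, \cite{Liu} and \cite{LU} is the most delicate step. A secondary nuisance is the bookkeeping of normalising constants and half-integral shifts, so that the $L$-value appearing in the inner product formula is exactly $L(\st,F,(N+1-2m)/2)$ and not some translate or incomplete product.
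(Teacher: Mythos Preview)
Your proposal follows essentially the same route as the paper. Parts (1) and (2) match exactly: both appeal to Liu's appendix on the compatibility of the unramified local theta correspondence with Satake parameters (the paper adds the observation that the local stabiliser of $L\otimes\Z_p$ is the standard $U_{N/2,N/2}(\Z_p)$ for $p\nmid D$, which is what makes Liu's result applicable). For (3), both arguments rest on the doubling method, Ichino's Siegel--Weil formula (Theorem~\ref{Ichino}) applied with $2m$ in place of $m$, and the local non-vanishing results of Lanphier--Urtis at $\infty$ and at $p\mid D$.

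The only difference is in the packaging of (3). The paper substitutes the Siegel--Weil identity directly into the left side of the Piatetski--Shapiro--Rallis basic identity (this is ``B\"ocherer's idea''): after the substitution, the doubling integral becomes a weighted sum $\sum_j |\langle\theta^{(m)}(L_j),F\rangle|^2/\#\Aut(L_j)$, which is positive exactly when $F$ lies in the image of $\Theta^{(m)}$. You instead interpret the same calculation as a Rallis inner product formula, first deducing that the theta lift of $F$ to $U_N(\A)$ is non-zero and then arguing back via adjointness. This is equivalent, but your route carries two extra obligations that the paper's avoids: you must check that the lift lands in $M(\C,K)$ at \emph{all} finite places (including $\pp\mid 2D$, not just $\pp\nmid 2D$ as you wrote), and that it projects to a single eigenspace. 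The B\"ocherer-style substitution sidesteps both.

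Two small slips to watch. First, $N>2m$ after doubling is \emph{not} the convergent range (that would be $N>4m$); Ichino's theorem covers the regularised range $N>2m$, which is what is used. Second, at $\pp\mid D$ the representation $\sigma_\pp$ \emph{is} spherical for $U_{m,m}(\Z_p)$, since $F$ has full level; the point is only that this maximal compact is special rather than hyperspecial, so the standard unramified doubling computation does not apply directly and one needs the Lanphier--Urtis argument (case $v\nmid\mathfrak{n}$).
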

 \begin{proof}
 Since $L$ is self-dual as an Hermitian lattice locally at all $p\nmid D$, and since Hermitian $\OO_E\otimes_\Z\Z_p$-lattices are determined up to isometry by their invariant factors
 \cite[Prop. 3.2]{J},\cite[Thm. 7.1]{Sh2}, the stabiliser in $U_N(\Q_p)$ of $\OO_E\otimes_\Z\Z_p$ is isomorphic to the standard $U_{N/2,N/2}(\Z_p)$. It follows that (1) and (2) are covered by work of Y. Liu \cite[Appendix]{Liu}, which also requires $p\neq 2$. See also \cite[Prop. 2.1]{Ich1}.

 Now we turn to (3). The doubling method of Piatetski-Shapiro and Rallis was developed in the case of unitary groups by Li \cite{Li} and by Harris, Kudla and Sweet \cite{HKS}. Consider the Eisenstein series $E^{(2m)}(g,\Phi)$ as above. We may embed $U_{m,m}\times U_{m,m}$ into $U_{2m,2m}$ as in \cite{HLS}, thus write $E^{(2m)}(g_1,g_2,\Phi)$ for $(g_1,g_2)\in U_{m,m}(\A)\times U_{m,m}(\A)$. To $F$ we may associate a function $\phi_F$ on $U_{m,m}(\A)$ in a standard way. By \cite[(3.1.2.8)]{HLS} (``Basic identity of Piatetski-Shapiro and Rallis''), with $s=s_0=\frac{N-2m}{2}$ and $\chi$ trivial (noting that $2m$ has been substituted for $m$ compared to above), we find that
     $$\int_{U_{m,m}(\Q)\backslash U_{m,m}(\A)}\int_{U_{m,m}(\Q)\backslash U_{m,m}(\A)}E^{(2m)}(g_1,g_2,\Phi)\phi_F(g_1)\phi_{\overline{F}}(g_2)\,dg_1\,dg_2$$
     is equal to
     $$L_D(\st,F,(N+1-2m)/2)Z_{\infty}(s_0,F,\Phi)\prod_{p\mid D}Z_p(s_0,F,\Phi)\left(\prod_{r=0}^{m-1}L(N-m-r,\chi_{-D}^{r})\right)^{-1},$$
     where $-D$ is the discriminant of $\OO_E$, the subscript $D$ means we omit Euler factors at primes $p\mid D$, $Z_{\infty}(s_0,F,\Phi)$ and the $Z_p(s_0,F,\Phi)$ are certain local zeta integrals and $\chi_{-D}$ is the quadratic character associated to $E/\Q$. We have corrected the power of $\chi_{-D}$, as in the footnote on \cite[p.42]{EHLS}. We need to know that $Z_{\infty}(s_0,F,\Phi)\prod_{p\mid D}Z_p(s_0,F,\Phi)\neq 0$. For $p\mid D$ an argument of Lanphier and Urtis \cite[\S 4, case $v\nmid\mathfrak{n}$]{LU} shows that
     $Z_p(s_0,F,\Phi)\neq 0$. To justify this, note that even though $U_{m,m}$ is ramified at such $p$, the maximal compact subgroup $U_{m,m}(\Z_p)$ is special (if not hyperspecial), as noted in \cite[\S 2.1]{AK}, so spherical vectors are still unique up to scaling \cite[\S 2.3]{Min}. We may also call on \cite[\S 4]{LU} for the non-vanishing of $Z_{\infty}(s_0,F,\Phi)$. We may now proceed as in the proof of \cite[Theorem 3]{LU}. This is close to B\"ocherer's idea of using the Siegel-Weil formula to substitute for the Eisenstein series in a pull-back formula/doubling integral \cite{Bo}, and our condition $N>2m$ is in order to apply Theorem \ref{Ichino}, with $2m$ substituted for $m$ because of the doubling.
  \end{proof}
 \begin{prop}\label{IkMiyHerm}
 Suppose that $\OO_E$ has class number $1$, and let $w$ be the number of units in $\OO_E$. Let $\kappa, g$ be even natural numbers, and suppose that $w\mid (\kappa/2)$.
 \begin{enumerate}
 \item Let $f\in S_{\kappa-g+1}(\Gamma_0(D),\chi_{-D})$ be a Hecke eigenform, where $E=\Q(\sqrt{-D})$ has discriminant $-D$ and $\chi_{-D}$ is the associated quadratic character. Assume that $f$ is not a CM form coming from a Hecke character of $K$. Then there exists a Hecke eigenform $F\in S_{\kappa}(U_{g,g}(\Z))$ with standard $L$-function $$L(\st,F,s)=\prod_{i=1}^g L(f,s+\frac{\kappa+1}{2}-i)L(f,s+\frac{\kappa+1}{2}-i,\chi_{-D}).$$
 \item Let $G\in S_{\kappa}(U_{r,r}(\Z))$ be a Hecke eigenform, for $r<g$. For $F$ as above, the function $$\FFF_{f,G}(Z):=\int_{U_{r,r}(\Z)\backslash\mathcal{H}_r}F\left(\begin{pmatrix} Z & 0\\0 & W\end{pmatrix}\right)\overline{G(W)}(\det\Im W)^{\kappa-2r}\,dW,$$ if non-zero, is a Hecke eigenform in $S_{\kappa}(U_{g-r,g-r})$, with standard $L$-function (if $\kappa\geq 2(g-r)$) $$L(\st,\FFF_{f,G},s)=L(\st,G,s)\prod_{i=1}^{g-2r}L(f,s+\frac{\kappa-2r+1}{2}-i)L(f,s+\frac{\kappa-2r+1}{2}-i,\chi_{-D}).$$
 \end{enumerate}
 \end{prop}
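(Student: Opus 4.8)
The plan is to deduce both parts from constructions already available in the literature: part (1) is Ikeda's Hermitian Ikeda lift, and part (2) is the Hermitian analogue of his Miyawaki-lift construction, worked out by Atobe and Kojima \cite{AK}. For (1): to the elliptic eigenform $f\in S_{\kappa-g+1}(\Gamma_0(D),\chi_{-D})$ one attaches a form in a Kohnen-type plus space (equivalently a Jacobi form), whose choice fixes a normalisation, and then writes down a candidate $F$ on $U_{g,g}$ whose Fourier coefficient $c(T)$ is a product of local factors built from Siegel series and the Fourier coefficients of that form. By a theorem of Ikeda, $F$ lies in $S_\kappa(U_{g,g}(\Z))$, is a Hecke eigenform, and has the asserted standard $L$-function; I would quote this, calling $F$ the Hermitian Ikeda lift $I^{(g)}(f)$, in exact analogy with Proposition \ref{IkMiy}(1). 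The hypothesis $w\mid(\kappa/2)$ is what makes the weight $\kappa$ compatible with the $\OO_E^{\times}$-part of the automorphy factor on $U_{g,g}(\Z)$ (for $E=\Q(\sqrt{-3})$ it reads $12\mid\kappa$, satisfied by $\kappa=12$ in the application in Proposition \ref{propherm}), and the non-CM hypothesis on $f$ ensures $F\neq 0$ and is genuinely cuspidal.

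For (2), take $F=I^{(g)}(f)$ from (1) and restrict it to the block-diagonal $\mathcal{H}_{g-r}\times\mathcal{H}_r\hookrightarrow\mathcal{H}_g$; for fixed $Z\in\mathcal{H}_{g-r}$ this is a holomorphic Hermitian cusp form of weight $\kappa$ in $W$ for $U_{r,r}(\Z)$, and a short check with the automorphy factor shows that the factor $(\det\Im W)^{\kappa-2r}\,dW$ makes the integrand $U_{r,r}(\Z)$-invariant, so $\FFF_{f,G}(Z)$ is a well-defined holomorphic cusp form of weight $\kappa$ for $U_{g-r,g-r}(\Z)$. Following Ikeda's Siegel argument, a see-saw/pullback computation shows that the Petersson pairing of $F|_{\mathcal{H}_{g-r}\times\mathcal{H}_r}$ against $\FFF_{f,G}\boxtimes G$ equals $\langle\FFF_{f,G},\FFF_{f,G}\rangle\,\langle G,G\rangle$ up to an explicit nonzero constant, so $\FFF_{f,G}\neq 0$ iff this pairing is nonzero; and a doubling-type computation, fed the known $L(\st,F,s)$ from (1) together with $L(\st,G,s)$, shows that when $\FFF_{f,G}\neq 0$ it is a Hecke eigenform whose Satake parameters are precisely those of $F$ not carried by $G$. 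The condition $\kappa\geq 2(g-r)$ is the convergence range for the standard Euler product of a weight-$\kappa$ form on $U_{g-r,g-r}$ (the analogue of $N>2m$ in Proposition \ref{eigenthetaHerm}(3), with $m=g-r$), in which the bookkeeping of Satake parameters yields exactly $g-2r$ copies each of $L(f,\cdot)$ and $L(f,\cdot,\chi_{-D})$, shifted by $\tfrac{\kappa-2r+1}{2}-i$; the case $g=4$, $r=1$ recovers the Hermitian form of Miyawaki's construction.

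The genuinely hard input is the $L$-function identity in (2): establishing the pullback/doubling formula for the Hermitian Ikeda lift precisely enough to isolate the ratio of standard $L$-functions, and controlling the archimedean and $p\mid D$ local zeta integrals so as to obtain an equality of $L$-functions rather than merely a relation up to bad Euler factors. All of this has been carried out by Atobe and Kojima \cite{AK}, building on Shimura's pullback formulas in the unitary setting and on Ikeda's original argument \cite{I2}, so in practice the proof reduces to quoting their results, exactly as Proposition \ref{IkMiy}(2) quotes Ikeda. As there, I would emphasise that (1) produces $F$ unconditionally, whereas in (2) the eigenform assertion is made only when $\FFF_{f,G}\neq 0$.
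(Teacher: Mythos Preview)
Your proposal is correct and takes essentially the same approach as the paper: the authors simply cite Ikeda \cite{I1} (specifically Theorem 5.2, Corollary 15.21 and Theorem 18.1) for (1) and Atobe--Kojima \cite[Theorem 1.1]{AK} for (2), noting that some of the hypotheses stated are stronger than necessary but hold in the application. Your sketch of what lies inside those references is accurate and more detailed than what the paper provides, but the underlying strategy---quote the existing Hermitian Ikeda lift and Miyawaki lift constructions---is identical.
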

 (1) was proved by Ikeda, and follows from Theorem 5.2, Corollary 15.21 and Theorem 18.1 in \cite{I1}. (2) is a theorem of Atobe and Kojima \cite[Theorem 1.1]{AK}. For simplicity we have imposed unnecessary conditions that are satisfied in our application.

\section{$12$-dimensional Hermitian forms over $\Q(\sqrt{-3})$, even unimodular over $\Z$}
When $E=\Q(\sqrt{-3})$ and $L$ is an $\OO_E$-lattice in $E^N$, even and unimodular as a $\Z$-lattice, $8\mid 2N\implies 4\mid N$. There is a single genus of such lattices \cite[Remark 1]{HKN}. When $N=4$ or $8$ there is a single class in the genus \cite[Corollary 1]{HKN}, and the global Arthur parameter will be $[N]$. For $N=12$, the genus contains $5$ classes,
studied by Hentschel, Krieg and Nebe \cite{HKN}. The matrix representing $T_{(2)}$, with respect to the basis ordered as in \cite[Theorem 2]{HKN}, is $\begin{pmatrix} 65520 & 3888000 & 1640250 & 0 & 0\\ 1458 & 516285 & 3956283 & 1119744 & 0\\15 & 96480 & 2467899 & 2998272 & 31104\\0 & 13365 & 1467477 & 3935781 & 177147\\0 & 0 & 405405 & 4717440 & 470925\end{pmatrix}$. This was computed by S. Sch\"onnenbeck, and given in \cite[\S 3.3]{DS}

\vskip5pt
\begin{center}
\begin{tabular}{|c|c|c|c|}\hline$\mathbf{i}$ & $\lambda_i\left(T_{(2)}\right)$ &  $g_i$ & Global Arthur parameters\\\hline
 $\mathbf{1}$ & $5593770$ & $0$ & $[12]$\\$\mathbf{2}$ & $1395945$ & $1$ & $\Delta_{11}\oplus [10]$\\$\mathbf{4}$ & $357525$ & $2$ & ${}^3\Delta_{10}[2]\oplus [8]$\\$\mathbf{8}$ & $85365$ & $3$ & $\Delta_{11}\oplus{}^3\Delta_8[2]\oplus [6]$\\$\mathbf{9}$ & $23805$ & $4$ & ${}^3\Delta_8[4]\oplus [4]$\\\hline
 \end{tabular}
 \end{center}
\vskip5pt
Some of the notation is further explained during the proof of the proposition below. In \cite{DS} we looked at a genus of $20$ classes of rank $12$ Hermitian $\OO_E$-lattices, unimodular as Hermitian (rather than Euclidean) lattices, and conjectured global Arthur parameters for all the eigenvectors arising. The entries in the above table match $5$ of those in \cite{DS}, and we have preserved the numbering used there, hence the gaps.

The eigenvectors are $$v_1=\begin{pmatrix}1\\1\\1\\1\\1\end{pmatrix}, v_2=\begin{pmatrix}-6000\\-1854\\-472\\219\\910\end{pmatrix}, v_4=\begin{pmatrix} 648000\\49572\\-2144\\-297\\20020\end{pmatrix}, v_8=\begin{pmatrix}-8294400\\-46656\\10240\\-7425\\80080\end{pmatrix}
v_9=\begin{pmatrix}6220800\\-69984\\7680\\-4455\\40040\end{pmatrix}.$$ Using the sizes of automorphism groups from \cite[Theorem 2]{HKN}, we find then that $\Theta^{(m)}(v_9)$ is a scalar multiple of $$\theta^{(m)}(L_1)-30\,\theta^{(m)}(L_2)+135\,\theta^{(m)}(L_3)-160\,\theta^{(m)}(L_4)+54\,\theta^{(m)}(L_5),$$ in agreement with the linear combination in \cite[Theorem 3(a)]{HKN}.
Note that the other linear combinations there do not correspond to eigenvectors, since they only represent quotients in a filtration.
\begin{prop}\label{propherm} The global Arthur parameters and degrees are as in the table.
\end{prop}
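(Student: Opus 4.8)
The plan is to imitate the proof of Proposition \ref{rt5N12} (and its orthogonal-group predecessors in \S 3 and \S 5), using the Hermitian theta maps $\Theta^{(m)}\colon M(\C,K)\to M_{12}(U_{m,m}(\Z))$, the Hermitian Siegel--Weil formula (Theorem \ref{Ichino}), the Hermitian B\"ocherer theorem (Proposition \ref{eigenthetaHerm}), and the Hermitian Ikeda and Miyawaki lifts (Proposition \ref{IkMiyHerm}). Here $N=12$, the relevant weight is $12$, and $2$ is inert in $E=\Q(\sqrt{-3})$, so $\Nm(2)=4$. First I would record the Hermitian analogue of Proposition \ref{nonzero} and Corollary \ref{degbound}: with the inner product and multiplication on $M(\C,K)$ normalised as before, $\langle\Theta^{(g_i+g_j)}(v_k)|_{\mathcal{H}_{g_i}\times\mathcal{H}_{g_j}},F_i\times F_j\rangle$ is a nonzero scalar times $(v_k,v_i\circ v_j)=\sum_t\frac{1}{\#\Aut(L_t)}c_{kt}c_{it}c_{jt}$, which is computed directly from the explicit eigenvectors $v_1,v_2,v_4,v_8,v_9$ above and the automorphism-group orders of \cite[Theorem 2]{HKN}. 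For $\mathbf{i=1}$: $v_1$ is the all-ones vector, so $\Theta^{(m)}(v_1)$ is the Eisenstein series furnished by Theorem \ref{Ichino} for $1\le m\le 11$, in particular non-zero; hence $g_1=0$ by convention and the parameter $[12]$ is forced, matching $c_\infty(\pi_1)$ and $\lambda_1(T_{(2)})=5593770$ via the unitary analogue of (\ref{Gross}).

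For $\mathbf{i=2}$: by Lemma \ref{SL2toU11}, $U(1,1)(\Z)\cong\OO_E^\times\times\SL_2(\Z)$, and since $|\OO_E^\times|=6$ divides $12$ the space $S_{12}(U_{1,1}(\Z))$ is one-dimensional, spanned by the eigenform $G$ attached to $\Delta\in S_{12}(\SL_2(\Z))$, with associated $\GL_2(\A_E)$-representation $\Delta_{11}$ (the base change of $\Delta$, which is cuspidal since $\Delta$ is non-dihedral) and standard $L$-function $L(\st,G,s)=L(\Delta_{11},s)$. Since $4\mid 12$ and $12>2$, Proposition \ref{eigenthetaHerm}(3) applies; the value $L(\st,G,11/2)=L(\Delta,11/2)L(\Delta\otimes\chi_{-3},11/2)$ is non-zero (it lies in the critical strip, but is checkable numerically, or by the functional equation from the region of absolute convergence), so $G=\Theta^{(1)}(v_i)$ for an eigenvector $v_i$, necessarily with parameter $\Delta_{11}\oplus[10]$ by Proposition \ref{eigenthetaHerm}(2) (the extra block being $\{\Nm\pp^{9/2},\dots,\Nm\pp^{-9/2}\}$). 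The eigenvalue $\lambda_i(T_{(2)})=1395945$ forces $i=2$, and $g_2=1$ since $\Theta^{(1)}(v_2)\ne 0$ while $v_2$ is not a multiple of $v_1$.

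For $\mathbf{i=4}$ and $\mathbf{i=9}$ I would apply Proposition \ref{IkMiyHerm}(1) with $\kappa=12$ (so $w=6\mid\kappa/2$). Taking $g=2$ and an eigenform $f\in S_{11}(\Gamma_0(3),\chi_{-3})$ gives a cuspidal Hermitian Ikeda lift $F\in S_{12}(U_{2,2}(\Z))$ whose standard $L$-function at $s=9/2$ is a product of $L(f,9),L(f,10)$ and their $\chi_{-3}$-twists, all in the region of absolute convergence and hence non-zero; Proposition \ref{eigenthetaHerm}(3) then puts $F$ in the image of $\Theta^{(2)}$, Proposition \ref{eigenthetaHerm}(2) gives the parameter ${}^3\Delta_{10}[2]\oplus[8]$ (the base change of $f$ being ${}^3\Delta_{10}$, plus the extra block $[8]$), the eigenvalue $357525$ forces $i=4$, and cuspidality of $F$ gives $\Theta^{(1)}(v_4)=\Phi(F)=0$, so $g_4=2$. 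Taking instead $g=4$ and $f\in S_9(\Gamma_0(3),\chi_{-3})$ (which one checks is not a CM form, since a level-$3$ CM form of $\Q(\sqrt{-3})$ must have weight congruent to $1$ modulo $6$) gives $F\in S_{12}(U_{4,4}(\Z))$; here the non-vanishing of the critical value at $s=5/2$ reduces, after pairing $f$ with its twist so that $L(f,s)L(f\otimes\chi_{-3},s)=|L(f,s)|^2$ on the real axis, to the non-vanishing of the near-central critical value $L(f,5)$. Granting that, Proposition \ref{eigenthetaHerm}(3) puts $F$ in the image of $\Theta^{(4)}$, Proposition \ref{eigenthetaHerm}(2) gives the parameter ${}^3\Delta_8[4]\oplus[4]$, the eigenvalue $23805$ forces $i=9$, and cuspidality gives $g_9=4$; this is precisely the conjecture of Hentschel--Krieg--Nebe that $\Theta^{(4)}(v_9)$ is a Hermitian Ikeda lift.

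Finally, for $\mathbf{i=8}$ I would use Proposition \ref{IkMiyHerm}(2) with $\kappa=12$, $g=4$, $r=1$, the same $f\in S_9(\Gamma_0(3),\chi_{-3})$, and $G=\Theta^{(1)}(v_2)$, obtaining $\FFF_{f,G}\in S_{12}(U_{3,3}(\Z))$ with $L(\st,\FFF_{f,G},s)=L(\Delta_{11},s)\prod_{i=1}^{2}L(f,s+\tfrac{11}{2}-i)L(f\otimes\chi_{-3},s+\tfrac{11}{2}-i)$. Computing from the explicit data that $(v_8,v_4\circ v_2)\ne 0$ (so $g_8\le g_4+g_2=3$) and $(v_9,v_8\circ v_2)\ne 0$ (so $g_8\ge g_9-g_2=3$) gives $g_8=3$; then the Hermitian Proposition \ref{nonzero}, together with $\Theta^{(4)}(v_9)=F$, $\Theta^{(1)}(v_2)=G$, and the defining integral of $\FFF_{f,G}$, shows $\langle\FFF_{f,G},\Theta^{(3)}(v_8)\rangle\ne 0$; hence $\FFF_{f,G}\ne 0$ and $\FFF_{f,G},\Theta^{(3)}(v_8)$ lie in the same Hecke eigenspace, so $\pi_8$ has parameter $\Delta_{11}\oplus{}^3\Delta_8[2]\oplus[6]$ (read off from the $L$-function above and Proposition \ref{eigenthetaHerm}(2)), matching $\lambda_8(T_{(2)})=85365$; a further application of Proposition \ref{eigenthetaHerm}(3) to $\FFF_{f,G}$ (non-vanishing of $L(\st,\FFF_{f,G},7/2)$, again reducing to values in the convergence region together with a functional-equation argument) identifies it with $\Theta^{(3)}(v_8)$ up to scalar, so $v_8$ is a Hermitian Miyawaki lift. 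The step I expect to be the main obstacle is the non-vanishing of the near-central critical value $L(f,5)$ of the weight-$9$, level-$3$ eigenform $f$ with character $\chi_{-3}$ (the analogue of the $L(g,4)\ne 0$ check in Proposition \ref{rt5N12}, case $\mathbf{i=8}$): it sits on the boundary of the region of absolute convergence and its functional-equation partner $L(f,4)$ is no better, so it must be verified numerically; a secondary point is carefully establishing the Hermitian analogue of Proposition \ref{nonzero} and the multiplicity-one statement in $S_{12}(U_{3,3}(\Z))$ needed to pin down $g_8$ and to identify the Miyawaki lift.
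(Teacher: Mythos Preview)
Your proposal is correct and follows essentially the same route as the paper: Siegel--Weil for $\mathbf{i=1}$, Lemma \ref{SL2toU11} plus Proposition \ref{eigenthetaHerm}(3) for $\mathbf{i=2}$, Hermitian Ikeda lifts via Proposition \ref{IkMiyHerm}(1) for $\mathbf{i=4}$ and $\mathbf{i=9}$, and the triple products $(v_9,v_8\circ v_2)$, $(v_8,v_4\circ v_2)$ together with the Miyawaki lift $\FFF_{f,\Delta}$ for $\mathbf{i=8}$.

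Two small corrections. First, for $\mathbf{i=2}$ you have the shift wrong: in the normalisation of Proposition \ref{eigenthetaHerm}(3) the Satake parameters are unitary, so $L(\st,G,11/2)=L(\Delta,11)L(\Delta,11,\chi_{-3})$ in the \emph{classical} normalisation, which lies in the region of absolute convergence---no numerical check or functional-equation argument is needed. Second, your ``main obstacle'' for $\mathbf{i=9}$ is not actually an obstacle: for the weight-$9$ form $f$ the value $L(f,5)$ sits at $\Re s=(k+1)/2$, the edge of the critical strip, and non-vanishing there is the Jacquet--Shalika theorem (for $\GL_2$ this is classical), not a numerical fact. This is quite different from the genuinely central $L(g,4)$ in Proposition \ref{rt5N12}, case $\mathbf{i=8}$, which really did require computation. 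With these adjustments all the required $L$-values in your argument are non-zero by general principles.
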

\begin{proof}

$\mathbf{i=1}$. Similar to earlier examples, we can get this from ``Siegel's Main Theorem'' (a.k.a. Siegel-Weil formula), as stated in \cite[Corollary 3]{HKN}.

$\mathbf{i=2}$. Let $\Delta=\sum_{n=1}^{\infty}\tau(n)q^n=q-24q^2+252q^3\ldots$ be the normalised cusp form spanning $S_{12}(\SL_2(\Z))$. Using Lemma \ref{SL2toU11} and $\#\OO_E^{\times}\mid 12$, the function $\Delta$ on $\HH_1=\mathcal{H}_1$ belongs to $S_{12}(U(1,1)(\Z))$. Since $12>2$ and $L(\st,\Delta,11/2)=L(\Delta,11)L(\Delta,\chi_{-3},11)\neq 0$, Proposition \ref{eigenthetaHerm}(3) implies that $\Delta=\Theta^{(1)}(v_i)$ for some $i$. By Proposition \ref{eigenthetaHerm}(2), $\pi_i$ has global Arthur parameter $\Delta_{11}\oplus [10]$ (where $\Delta_{11}$ is now the base change to $\GL_2(\A_E)$ of that appearing in \S 3). Hence $\lambda_i(T_{(2)})=((-24)^2-2\cdot2^{11})+4\,\frac{4^{10}-1}{4-1}+\frac{2^{12}-1}{2+1}=1395945$, as in \cite[Proposition 4.1]{DS}, so $i=2$.

$\mathbf{i=4}$. The space $S_{11}(\Gamma_0(3),\chi_{-3})$ is $2$-dimensional, spanned by a Hecke eigenform $g=q+12\sqrt{-5} q^2$
 $$+(-27+108\sqrt{-5})q^3+304q^4-1272\sqrt{-5}q^5+(-6480-324\sqrt{-5})q^6+17324 q^7+\ldots$$
 and its (Galois or complex) conjugate. The associated cuspidal automorphic representations of $\GL_2(\A)$ are quadratic twists by $\chi_{-3}$ of one another, so share the same base change to $\GL_2(\A_E)$, which we denote ${}^3\Delta_{10}$. We apply Proposition \ref{IkMiyHerm}(1), with $\kappa=12, g=2$ (so $\kappa-g+1=11$) to produce an Hermitian Ikeda lift $F=I^{(2)}(g)$. Since
 $L(\st,F,9/2)=L(g,10)L(g,9)L(g,10,\chi_{-3})L(g,9,\chi_{-3})\neq 0$, Proposition \ref{eigenthetaHerm}(3) shows that $F=\Theta^{(2)}(v_i)$ for some $i$, and it follows from Proposition \ref{eigenthetaHerm}(2) and $L(\st,F,s)=L(g,s+11/2)L(g,s+11/2,\chi_{-3})L(g,s+9/2)L(g,s+9/2,\chi_{-3})$ that $\pi_i$ has global Arthur parameter ${}^3\Delta_{10}[2]\oplus [8]$.
 Then $\lambda_i(T_{(2)})=((12\sqrt{-5})^2+2\cdot2^{10})(1+4)+4^2\,\frac{4^{8}-1}{4-1}+\frac{2^{12}-1}{2+1}=357525$, so $i=4$.

$\mathbf{i=9}$. The space $S_{9}(\Gamma_0(3),\chi_{-3})$ is $2$-dimensional, spanned by a Hecke eigenform $f=q+6\sqrt{-14} q^2+\ldots$
 and its (Galois or complex) conjugate. We proceed as for $i=4$, now with $\kappa=12, g=4$, so $\kappa-g+1=9$, showing in the process that $g_9=4$ and $\Theta^{(4)}(v_9)=I^{(4)}(f)$, which was conjectured in \cite[Remark 3(b)]{HKN}. (The other degrees were proved in \cite[Theorem 3]{HKN} by computing coefficients of theta series.)

$\mathbf{i=8}$. Since $(v_9,v_8\circ v_2)\neq 0$ and we already know that $g_9=4$ and $g_2=1$, Corollary \ref{degbound} implies that $g_8\geq 3$. Then $(v_8,v_4\circ v_2)\neq 0$ gives $g_8\leq 3$, so $g_8=3$. Now $(v_9,v_8\circ v_2)\neq 0$ tells us, via Proposition \ref{nonzero}, that $\langle\FFF_{f,\Delta},\Theta^{(3)}(v_8)\rangle\neq 0$, so that $\Theta^{(3)}(v_8)$ and $\FFF_{f,\Delta}$ lie in the same Hecke eigenspace, and have the same standard $L$-function. Then using $L(\st,\FFF_{f,\Delta},s)=$
$$L(\Delta,s+\frac{11}{2})L(\Delta,s+\frac{11}{2},\chi_{-3})L(f,s+\frac{9}{2})L(f,s+\frac{9}{2},\chi_{-3})L(f,s+\frac{7}{2})L(f,s+\frac{7}{2},\chi_{-3})$$
and Proposition \ref{eigenthetaHerm}(2), we deduce that $\pi_8$ has global Arthur parameter $\Delta_{11}\oplus{}^3\Delta_8[2]\oplus [6]$. Incidentally, of course the value of $\lambda_8(T_{(2)})$ implied by this agrees with that computed using neighbours. We can go further, now we know that $\FFF_{f,\Delta}\neq 0$. Since $N=12$ and $m=3$, $N>2m$, so Proposition \ref{eigenthetaHerm} applies, and
$$L(\st,\FFF_{f,\Delta},(N+1-2m)/2)$$
$$=L(\Delta,9)L(\Delta,9,\chi_{-3})L(f,8)L(f,8,\chi_{-3})L(f,7)L(f,7,\chi_{-3})\neq 0,$$
so $\FFF_{f,\Delta}$ is in the image of $\Theta^{(3)}$, necessarily a scalar multiple of $\Theta^{(3)}(v_8)$.
\end{proof}
\begin{remar} It follows from the above that, up to scalar multiples,
\begin{enumerate}
\item either Hecke eigenform $g\in S_{11}(\Gamma_0(3),\chi_{-3})$ has degree $2$ Hermitian Ikeda lift
$$I^{(2)}(g)=\theta^{(2)}(L_1)+840\,\theta^{(2)}(L_2)-1206\,\theta^{(2)}(L_3)-1024\,\theta^{(2)}(L_4)+2592\,\theta^{(2)}(L_5);$$
\item either Hecke eigenform $f\in S_9(\Gamma_0(3),\chi_{-3})$ and $\Delta\in S_{12}(\SL_2(\Z))$ have degree $3$ Hermitian Miyawaki lift
$$\FFF_{f,\Delta}=\theta^{(3)}(L_1)+15\,\theta^{(3)}(L_2)-135\,\theta^{(3)}(L_3)+200\,\theta^{(3)}(L_4)-81\,\theta^{(3)}(L_5);$$
\item $f$ as above has degree $4$ Hermitian Ikeda lift
$$I^{(4)}(f)=\theta^{(4)}(L_1)-30\,\theta^{(4)}(L_2)+135\,\theta^{(4)}(L_3)-160\,\theta^{(4)}(L_4)+54\,\theta^{(4)}(L_5).$$
\end{enumerate}
For any fixed $m$ with $0\leq m\leq 4$, the $\theta^{(m)}(v_i)$ such that $g_i\leq m$ are linearly independent. For the unique $i$ with $g_i=m$, $\theta^{(m)}(v_i)$ is a cusp form, while those $\theta^{(m)}(v_i)$ with $g_i<m$ are killed by different powers of the Siegel operator.
\end{remar}

\end{document}